\numberwithin{equation}{section}
\providecommand{\U}[1]{\protect\rule{.1in}{.1in}}
\providecommand{\lb}{\left(}
\providecommand{\rb}{\right)}
\providecommand{\lbr}{\left\{}
\providecommand{\rbr}{\right\}}
\providecommand{\dd}{{\rm d}}
\providecommand{\be}{\begin{equation}}
\providecommand{\ee}{\end{equation}}
\providecommand{\1}{\ifmmode {1\hskip -3pt \rm{I}}}
\providecommand{\df}{\stackrel{\Delta}{=}}
\providecommand{\eqvs}{\stackrel{\sim}{=}}
\providecommand{\smof}[1]{{\mathrm o}\lb #1\rb }
\providecommand{\bigof}[1]{{\mathrm O}\lb #1\rb }
\providecommand{\Kb}{{\mathbf K}_\beta}
\providecommand{\pKb}{\partial {\mathbf K}_\beta}
\providecommand{\ugamma}{\underline{\gamma}}
\providecommand{\uempty}{\underline{\emptyset}}
\providecommand{\ucalC}{\underline{\calC}}
\providecommand{\uGamma}{\underline{\Gamma}}
\providecommand{\suptwo}[2]{\sup_{\substack{#1 \\ #2}}} % sup with 2 lines
\providecommand{\sumtwo}[2]{\sum_{\substack{#1 \\ #2}}} % sum with 2 lines
\providecommand{\abs}[1]{\left| #1\right|}
\providecommand{\calA}{\mathcal{A}}
\providecommand{\calB}{\mathcal{B}}
\providecommand{\calC}{\mathcal{C}}
\providecommand{\calH}{\mathcal{H}}
\providecommand{\calK}{\mathcal{K}}
\providecommand{\calL}{\mathcal{L}}
\providecommand{\calP}{\mathcal{P}}
\providecommand{\calQ}{\mathcal{Q}}
\providecommand{\calY}{\mathcal{Y}}
\providecommand{\frA}{\mathfrak{A}}
\providecommand{\frE}{\mathfrak{E}}
\providecommand{\frP}{\mathfrak{P}}
\providecommand{\frR}{\mathfrak{R}}
\providecommand{\frS}{\mathfrak{S}}
\providecommand{\bbE}{\mathbb{E}}
\providecommand{\bbG}{\mathbb{G}}
\providecommand{\bbI}{\mathbb{I}}
\providecommand{\bbN}{\mathbb{N}}
\providecommand{\bbP}{\mathbb{P}}
\providecommand{\bbR}{\mathbb{R}}
\providecommand{\bbS}{\mathbb{S}}
\providecommand{\bbZ}{\mathbb{Z}}
\providecommand{\sfc}{\mathsf{c}}
\providecommand{\sfe}{\mathsf{e}}
\providecommand{\sfg}{\mathsf{g}}
\providecommand{\sfh}{\mathsf{h}}
\providecommand{\sfl}{\mathsf{l}}
\providecommand{\sfm}{\mathsf{m}}
\providecommand{\sfn}{\mathsf{n}}
\providecommand{\sfr}{\mathsf{r}}
\providecommand{\sfs}{\mathsf{s}}
\providecommand{\sft}{\mathsf{t}}
\providecommand{\sfu}{\mathsf{u}}
\providecommand{\sfv}{\mathsf{v}}
\providecommand{\sfw}{\mathsf{w}}
\providecommand{\sfx}{\mathsf{x}}
\providecommand{\sfy}{\mathsf{y}}
\providecommand{\sfz}{\mathsf{z}}
\providecommand{\sfA}{\mathsf{A}}
\providecommand{\sfO}{\mathsf{O}}
\providecommand{\sfP}{\mathsf{P}}
\providecommand{\sfR}{\mathsf{R}}
\providecommand{\sfU}{\mathsf{U}}
\providecommand{\sfV}{\mathsf{V}}
\providecommand{\sfX}{\mathsf{X}}
\providecommand{\gm}[1]{\gamma^{[#1 ]}}
\providecommand{\Gm}[1]{\Gamma^{[#1 ]}}
\providecommand{\chip}{\chi^\prime}
\newtheorem{theorem}{Theorem}
\newtheorem{definition}[theorem]{Definition}
\newtheorem{lemma}[theorem]{Lemma}
\newtheorem{proposition}[theorem]{Proposition}
\newtheorem{remark}[theorem]{Remark}
\providecommand{\step}[1]{S{\small TEP}\,#1}
\providecommand{\opt}[1]{C{\small ASE}\,#1}
\providecommand{\lxu}{\ell_\sfx^{{ \sfu}}}
\providecommand{\lxv}{\ell_\sfx^{{ \sfv}}}
\begin{document}

\title[{Interaction versus Repulsion for polymers}]{ {Interaction versus {entropic}  repulsion for low temperature Ising polymers}}

\begin{abstract}
Contours associated to many interesting low-temperature statistical mechanics models
(2D Ising model, (2+1)D SOS interface model, etc) can be described as
self-interacting and self-avoiding walks on $\mathbb Z^2$. When the model is defined in
a finite box, the presence of the boundary induces an
interaction, that can turn out to be attractive, between the contour
and the boundary of the box. On the other hand, the
contour cannot cross the boundary, so it feels entropic repulsion from
it. In various situations
of interest \cite{CLMST1,CLMST2,CMTrep,SENYADIMA}, a crucial technical problem is to
prove that entropic repulsion prevails over the pinning
interaction: in particular, the contour-boundary interaction should not
modify significantly the contour partition function and the related
surface tension should be unchanged. Here we prove that this is indeed the case, at least at
sufficiently low temperature, in a quite general framework that
applies in particular to the models of interest mentioned above.
\end{abstract}
\author{Dmitry Ioffe}
\address{Faculty of IE\&M, Technion, Haifa 32000, Israel}
\email{ieioffe@ie.technion.ac.il}
\thanks{The research of D.I. was supported by Israeli Science Foundation grant 817/09 and
by the Meitner Humboldt Award. The hospitality of Bonn University during the academic year 2012-13
is gratefully acknowledged.}
\author{Senya Shlosman}
\address{Aix Marseille Universit\'e, Universit\'e de Toulon,
CNRS, CPT UMR 7332, 13288, Marseille, France;
Inst. of the Information Transmission Problems,
RAS, Moscow, Russia} \email{senya.shlosman@univ-amu.fr}

\author{Fabio Lucio Toninelli}
\address{Universit\'e de Lyon, CNRS and Institut Camille Jordan, Universit\'e Lyon 1,
    43 bd
 du 11 novembre 1918, 69622 Villeurbanne, France}
\email{toninelli@math.univ-lyon1.fr}

\date{\today}
\maketitle

\section{Introduction}
Two-dimensional statistical mechanics models are  often
conveniently rewritten in terms of contour ensembles: for instance,
for the Ising model contours are curves, separating
 $+$ spins from $-$ spins, while
for the $(2+1)$-dimensional SOS interface model, contours
correspond to level lines of the interface.
See Section
\ref{sec:appli} for various examples. At low temperature
$1/\beta$, the ensemble of non-intersecting contours $\gamma$ is defined by the weight
%of the type
\be
\label{eq:IP-weights}
w\left(  \gamma\right)  =\exp\Bigl[  -\beta\left\vert \gamma\right\vert
+\sum_{\calC:\calC\cap\Delta_\gamma\neq\varnothing}\Phi
\left(  \calC,\Delta_\gamma\cap \calC\right)
\Bigr]  ,
\ee
(the notation $\calC\cap\Delta_\gamma\neq\varnothing$ essentially means that the sum is taken over all sets $\calC\subset \bbZ^2$ that
intersect the contour $\gamma$,  see  Section \ref{sec:themain} for more details).  The first term $\beta|\gamma|$ tends to make the contour as short as
possible, while the ``decoration term'' containing $\Phi$ can be seen
as a self-interaction of the path. This self-interaction is small for
$\beta$ large (see \eqref{07}) but on the other hand it is non-local.
In  this ensemble
%the ensemble \eqref{06},
a long contour typically has a
Brownian behavior under diffusive rescaling.
When the contour is close to the boundary of the system, as discussed
in Section \ref{sec:appli}, the
potentials $\Phi$ are modified to some $\tilde \Phi$ (that still
satisfy \eqref{07} with the same value for $\chi$) and this results in an effective
interaction $\tilde \Phi-\Phi$ with the boundary, that may well turn out to
be attractive (there is no way to control
apriori its sign). On the other hand, since the
contour cannot cross the boundary of the system, it feels entropic
repulsion from it and
it is not obvious  whether pinning or repulsion prevails.
%Let us mention that
 This issue turns out to be one of the main
technical difficulties in recent studies of fluctuations of low-temperature
discrete interface models \cite{CLMST1,CLMST2,SENYADIMA,CMTrep}.
Its treatment in the book \cite{DKS} contains a mistake. Until
now, this difficulty has been bypassed via model-dependent tricks --
for example, via FKG inequalities in \cite{CLMST1,CLMST2,CMTrep},
but we feel that a more general solution is called for. See Appendix  A
 below for a simple patch for \cite{DKS}.

A well known and simpler problem \cite{GBbook}, that essentially corresponds to the
situation where
$\gamma$ is a directed walk and the potentials $\Phi$ act only at zero
distance, can be
formulated as follows: let $\gamma=(\gamma_n)_{n\ge0}$ be a centered random walk on
$\bbZ$ with $\gamma_0=0$, conditioned to be non-negative. Let us bias its law by the exponential of the
number of returns to zero, times some positive parameter $\epsilon$.
Then, it is known that there exists a critical $\epsilon_c>0$ such
that for $\epsilon>\epsilon_c$ the walk is positively recurrent while
for $\epsilon<\epsilon_c$ it is transient. In some particular cases,
one can sharply identify \cite{Yoshida} the critical point, which
turns out to depend crucially on the variance of the random walk step
($\epsilon_c$ tends to zero when the variance goes to zero). The
problem exposed above boils down essentially to deciding whether the
interaction $\tilde\Phi-\Phi$ corresponds to an $\epsilon$ that is
below or above the critical threshold.

Our main result here (Theorem \ref{NoPin}) is that, for $\beta$ large, the pinning
interaction $\tilde \Phi-\Phi$,
 with $\tilde \Phi$ and $\Phi$
satisfying \eqref{07} for $\chi>\frac{1}{2}$,
is not sufficient to pin the contour to
the boundary, and the contour behaves essentially as if only the
entropic repulsion were present (more precisely, the ratio of
partition functions of the models with and without pinning interaction
is uniformly bounded, but more information can be deduced on the
similarity between the contour laws themselves,
see discussion in Section \ref{sec:appli}).

We would like to emphasize that there is a subtle point here. It is
true that for $\beta $ large the pinning potential becomes exponentially small
(because of \eqref{07}). However, in this regime the variance of
the contour steps in the direction perpendicular to the wall is
exponentially small as well! (It is due to the $\beta|\gamma|$ term in \eqref{06}). As we mentioned, in the directed walk case it is
known that $\epsilon_c$ scales to zero with the variance, so there is
really a non-trivial competition to be considered in the $\beta$ large limit.

In fact, if the self-interaction $ \Phi$ is
just a bit stronger -- for example, it still satisfies \eqref{07}, but
with a smaller $\chi\le{\frac{1}{2}}$ -- then the pinning can
happen for some potential modification $\tilde \Phi-\Phi$, so our result is quite sharp.  To see
this, consider the situation where the endpoints of the path are
$(0,0)$ and $(L,0)$ and contours $\gamma$ are constrained in the upper
half-plane, i.e. the system boundary is the horizontal line
$y=0$. Assume also that $\Phi$ satisfies \eqref{07} with $\chi=1/2$
and that $\tilde\Phi(\calC)- \Phi(\calC)$ vanishes except when
$\calC=\{x\}$ with $x$ a lattice site touching both the contour and
the line $y=0$, in which case we put \[\tilde\Phi(\{x\})-
\Phi(\{x\})=M \exp(-\beta)\] (this is compatible with $\chi=1/2$,
since ${\rm diam}_\infty(\{x\})=1$ in \eqref{07}). It is known from
\cite[Chapter 4]{DKS} that
\[
 \sum_{\gamma}w(\gamma)\le \frac{C(\beta)}{\sqrt L}e^{- L(\beta+O(e^{-\beta}))}
\]
for $\beta$ large.
On the other hand, for the ensemble with modified potential $\tilde\Phi$ we can lower bound the sum $\sum_{\gamma}\tilde w(\gamma)$ by keeping
only the configuration $\gamma$ that joins the two endpoints with a straight segment of length $L$. Using the decay properties of $\Phi$ and standard estimates from \cite{DKS} we find then
\[
 \sum_{\gamma}\tilde w(\gamma)\ge e^{-L (\beta+O(e^{-\beta})+M e^{-\beta})}.
\]
If $M>0$ is  chosen sufficiently large, we see that the partition function
of the modified ensemble is exponentially larger than the original one, i.e. pinning prevails.

We will prove  Theorem \ref{NoPin} in a rather general context, i.e., we will
only assume some symmetry and decay properties of the potentials
$\Phi$ (that are verified for the various examples mentioned in
Section \ref{sec:appli}) but we will avoid using any of the special
features of these models (FKG inequalities, etc). In the directed walk
case, the problem can be easily solved via renewal theory, since the
set of return times to zero forms a renewal sequence. This is not the
case for the set of contour/boundary contacts, due to backtracks and
self-interactions of the contour. Another new difficulty is that the
pinning potential $\tilde\Phi-\Phi$, while weak, has infinite range,
so contour-wall interactions occur irrespective of their mutual
distance (of course, the strength decays fast with the distance).
The basic idea of the proof is to identify
a suitable effective random walk structure related to the contour.
{
Such approach was worked out in various disguises
in the framework of the Ornstein-Zernike theory \cite{CIV03,CIL,IV08}.
}
Once this is done, a crucial
role is played by an identity of Alili and Doney \cite{AD}.
This identity relates two quantities:

\noindent -- the probability for a one-dimensional random
walk to go from $x>0$ to $y>0$ in time $T$, conditionally on staying
positive in between,

\noindent --  the number of ladder heights of this random walk up to
time $T$,
%\noindent
see \eqref{eq:AD-expBound}.

{An  adjustment of the above approach
in the context of  effective random walk decomposition of sub-critical percolation
clusters (at a fixed value of $p<p_c$) was worked out in \cite{CIL}.
In the latter case, however, the interaction between different clusters could be bypassed, and as a
result, it was not necessary to investigate
its competition  with the entropic repulsion between
effective random walks.

One of the main thrusts
of this work is to derive methods which, in a situation
when interaction may be attractive, enable to control  degeneracy
of variance versus
degeneracy of pinning as $\beta$ becomes large.}
In principle our approach should apply for more complicated
geometries of open contours $\gamma$ in
\eqref{eq:IP-weights} and, accordingly, for more complicated
 energy  functions than just $\abs{\gamma}$.
For instance it should apply for low temperature two dimensional Blume-Capel
model in the regime when there are two stable ordered phases \cite{HK03}
What is
important is an intrinsic renewal structure of $\gamma$ which gives rise
to an effective random walk decomposition with exponentially decaying
distribution of steps.
The main  simplifying feature of
low temperature
Ising Polymers is that  there are only four basic steps (see
Figure~\ref{fig:Animals})  one needs to consider in order to sort
 out the pinning issue for a general class of interactions subject
 to Assumptions (P1)-(P3) below. As a result the covariance
 structure of the effective walk and, accordingly, the competition between
 pinning and entropic repulsion can be quantified in somewhat explicit
 terms.

\section{The Main Result}

\subsection{The contour ensemble}
\label{sec:themain}

The interface $\gamma$ is an open contour: it is a connected
collection $e_{1},\dots,e_{k}$ of bonds of the dual lattice $\mathbb{Z}_{\ast
}^{2}=\mathbb{Z}^{2}+(1/2,1/2)$, connecting two  points
$a\neq b\in \mathbb{Z}_{\ast
}^{2}$  (we write $\gamma:a\mapsto b$) such that:
\begin{enumerate}
\item $e_{i}\ne e_{j}$ for every $i \ne j$;
\item for every $i$, $e_{i}$ and $e_{i+1}$ have a common vertex in
$\mathbb{Z}^{2}_{*}$;
\item if four bonds $e_{i}, e_{i+1}$ and $e_{j}, e_{j+1}$, $i \ne j$ meet
at some $x\in\mathbb{Z}^{2}_{*}$, then $e_{i}, e_{i+1}$ are on the same side
of the line across $x$ with slope $+1$ (and the same holds for $e_{j},
e_{j+1}$)
\end{enumerate}
The third condition corresponds to the usual ``south-west splitting rule''
that is commonly adopted for Ising-type contours \cite{DKS}. Given a contour
$\gamma$, we let $\Delta_\gamma$ denote the set of sites in $\mathbb{Z}^{2}$
that are either at distance $1/2$ from $\gamma$ or at distance $1/\sqrt2$ from
it, in the south-west or north-east direction, \cite{DKS}. Also
we let $\left\vert \gamma\right\vert $ denote the number of bonds in $\gamma$.

Let $\calC\subset\mathbb{Z}^{2}$ be a finite subset. In what follows we will
identify $\calC$ with the union $\cup_{x\in \calC}S_{x}$ of closed unit squares
$S_{x}\subset\mathbb{R}^{2}$ centered at $x.$ If $\calC$ is connected, then we
denote by ${\rm diam}_\infty(\calC)$ its diameter  in the
$\|\cdot\|_\infty$-norm; if $\calC$ is not connected, then by
convention we set ${\rm diam}_\infty(\calC)=\infty.$
Note that, with
our conventions, if $\calC$ is a single point $x\in \bbZ^2$, then
${\rm diam}_\infty(\calC)=1$.

To every pair $\gamma,\calC$ with $\gamma$ an open contour and finite
$\calC\subset\mathbb{Z}^{2}$, we assign a function
(or \emph{potential}) $\Phi(\calC;\gamma)$ which we
assume satisfy:
\begin{enumerate}
\item [(P1)] Locality: $\Phi$ depends on $\gamma$ only through $\calC\cap \Delta_\gamma$:
  \begin{eqnarray}
    \label{eq:5}
\Phi(\calC;\gamma)=\Phi\left(  \calC,\Delta_\gamma\cap \calC\right)
  \end{eqnarray}
\item  [(P2)] Decay: there exist some $\chi>0$,
%$\beta_0>0$
such that for all $\beta$ sufficiently large,
\begin{equation}
\left\vert \Phi\left(  \calC,\Delta_\gamma\cap \calC\right)  \right\vert \leq
\exp\left\{  -\chi
%\left(
{
\beta
}
%-\beta_{0}\right)
({\rm diam}_\infty(\calC)+1)
\right\} . \label{07}%
\end{equation}
\item [(P3)] Symmetry: $\Phi$ possesses
translational symmetries of $\bbZ^2$,
i.e. that $\Phi\left(  \calC,\Delta_\gamma\cap \calC\right) $ is
unchanged
if both $\calC$ and $\gamma$ are translated by some vector $\sfu$.
In addition we assume that the surface tension
  $\tau_\beta(\sfx)$  which is defined
below possesses the full set of
discrete symmetries (rotations by a multiple of $\pi/2$ and
reflections with respect to axis and diagonal directions) of
$\bbZ^2$.

\end{enumerate}
 The polymer weight associated
to a contour $\gamma$ is defined as
\begin{equation}
w\left(  \gamma\right)  =\exp\Bigl[  -\beta\left\vert \gamma\right\vert
+\sum_{\calC:\calC\cap\Delta_\gamma\neq\varnothing}\Phi\left(  \calC,\Delta_\gamma\cap \calC\right)
\Bigr]  , \label{06}%
\end{equation}
where the sum goes over all finite connected subsets $\calC \subset\mathbb{Z}^{2}$.

\subsection{The modified potential landscape}

\label{sec:modi}
We use notation $\sf0_* = (1/2 , 1/2 )$ for the origin of the
dual lattice $\bbZ_*^2$.
For a unit vector $\sfn$ define the half-plane
{
$\calH_{+ ,\sfn} = \lbr
\sfx~:~ \lb \sfx - \sf0_*\rb \cdot\sfn  \geq  0\rbr$.
}
For $\sfu\in \bbZ_*^2 \cap \calH_{+ ,\sfn}$ we use $\dd_\sfn(\sfu )\in\bbN$ for
the distance in the
$\|\cdot\|_\infty$-norm from $\sfu$ to $\calH_{+,\sfn}^{\sfc}\cap\bbZ_*^2$.
 Define $\calB_{+ ,\sfn}  = \lbr \sfu \in Z_*^2\cap \calH_{+ ,\sfn} ~: ~ \dd_\sfn(\sfu ) = 1 \rbr$,
that is  $\calB_{+ ,\sfn}$ is a lattice approximation of
the boundary $\partial\calH_{+ , \sfn}$.

The modified
polymer weight $\tilde{w}\left( \gamma\right) $ is defined by the
formula $\left( \ref{06}\right) ,$ with potential $\Phi$ replaced by
some (not necessarily translation %or rotation
invariant) potential $\tilde{\Phi},$
such that
\begin{itemize}
\item $\tilde{\Phi}\left(  \calC,\calC\cap\Delta_\gamma\right)  =\Phi\left(  \calC,\calC\cap
\Delta_\gamma\right)  $ if $\calC$ is contained in $\calH_{+ ,\sfn}$;

\item $\tilde{\Phi} \left(  \calC,\calC\cap\Delta_\gamma\right)  $ satisfies $\left(
\ref{07}\right)  $ for all $\calC$. % , possibly with $\beta_{0}$ replaced by some
% other constant $\tilde\beta_{0}$.
\end{itemize}
{   Note that if} $\tilde\Phi>\Phi$, the modification of the potentials can
introduce an attractive interaction with
the line $\calB_{+ ,\sfn}$.
Nevertheless, our main result says that the model
with modified weights {and} with the restriction that $\gamma$ stays in
$\calH_{+ ,\sfn}$ (we write $\gamma\in\calP_{+ ,\sfn}$)
has the same surface
tension as the original one.

\begin{theorem}
\label{NoPin} Let $\tilde{\Phi}$, $\Phi$ be as above. Assume that
$\chi> {\frac{1}{2}
}
$ in \eqref{07}

and let $\beta$ be large enough.
For all
{
$\arg( \sfn )\in [-\frac{\pi}{4} , \frac{3\pi}{4}]$,
}
the following two surface tensions
coincide:%
\begin{eqnarray}
\tau(\beta,\sfn)=-\lim_{N\rightarrow\infty}\frac{1}{\beta d_N}\ln\Big(
\sum_{\gamma:0_* \mapsto \sfx_N}w(  \gamma)  \Big)  =-\lim_{N\rightarrow\infty}%
\frac{1}{\beta d_N}\ln\Big(  \sumtwo{\gamma: 0_* \mapsto \sfx_N}{ \gamma\in\calP_{+ ,\sfn}}\tilde{w}(
\gamma)  \Big)   \label{eq:2tau}%
\end{eqnarray}
where $\sfx_N$ is  {any}   sequence of points in $\calB_{+ ,\sfn}$ whose
Euclidean distance $d_N$ from the origin diverges.
{In words, the (possible extra attraction) $\tilde{\Phi}$ can not
produce the pinning of $\gamma$ to the wall $\partial\calH_{+ , \sfn}$.}

A stronger result holds: there exist constants $c_1(\beta), c_2 (\beta
)$ such that,
for $\beta$ large enough,
\begin{equation}
\label{eq:1}
c_1(\beta) \sumtwo{\gamma:\sfx\mapsto \sfy}{\gamma\in\calP_{+ ,\sfn}}
w\left(  \gamma\right)   \leq
\sumtwo{\gamma:\sfx\mapsto \sfy}{\gamma\in\calP_{+ ,\sfn}}\tilde{w}\left(
  \gamma\right) \leq
c_2(\beta)
 \sumtwo{\gamma:\sfx\mapsto \sfy}{\gamma\in\calP_{+ ,\sfn}}
w\left(  \gamma\right)
\end{equation}
uniformly for all $\sfx,\sfy\in \calB_{+ ,\sfn}$.
\end{theorem}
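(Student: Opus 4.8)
\emph{Overview and Step 1 (effective walk).} The argument has three blocks: rewrite the constrained partition functions through an effective random walk obtained from an Ornstein--Zernike renewal decomposition; isolate $\tilde\Phi-\Phi$ as a small multiplicative perturbation localised near $\partial\calH_{+,\sfn}$; and show, via the Alili--Doney identity, that under the entropically repelled walk this perturbation has a bounded exponential moment, uniformly in the distance $d$. The quantitative heart is a comparison of the decay rate $2\chi\beta$ of the smallest admissible cluster with the degeneracy rate $\beta$ of the transverse variance of the effective walk, and the hypothesis $\chi>\tfrac12$ is precisely what makes the former win. Concretely: cutting $\gamma$ at its cone points produces a decomposition $\gamma=\omega_1\circ\cdots\circ\omega_r$ into irreducible pieces, and at low temperature only the four elementary increments of Figure~\ref{fig:Animals} carry non‑negligible weight. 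Summing $w$ over irreducible pieces of prescribed displacement and performing the exponential tilt conjugate to the direction perpendicular to $\sfn$ --- normalised so that the tilted irreducible weights form a probability law $\mathbb P$ on $\bbZ^2$ --- one obtains, up to bounded ($\beta$‑dependent, $d$‑independent) boundary factors,
\[
\sum_{\gamma:\sfx\mapsto\sfy}w(\gamma)= e^{-\beta\tau(\beta,\sfn) d}\,q(\sfx,\sfy),\qquad
\sumtwo{\gamma:\sfx\mapsto\sfy}{\gamma\in\calP_{+,\sfn}}w(\gamma)= e^{-\beta\tau(\beta,\sfn) d}\,q^{+}(\sfx,\sfy),
\]
where $q$ is a $\mathbb P$‑bridge probability, $q^{+}$ the corresponding bridge conditioned to stay in $\calH_{+,\sfn}$, and the exponential rate $\tau(\beta,\sfn)$ is the one from the first limit in \eqref{eq:2tau} (the constraint affects only the subexponential factor). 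Locality (P1) and decay (P2) make the tilted sums convergent and keep the decomposition valid up to and along the wall; symmetry (P3) lets the tilt be chosen so that the transverse increment $\xi_\perp$ (the $\sfn$‑component of a step) is centered. The one quantitative fact to retain is that $\xi_\perp$ has exponential tails but $\sigma_\beta^{2}:=\mathrm{Var}(\xi_\perp)\asymp e^{-\beta}$ when $\arg(\sfn)\in\{0,\tfrac\pi2\}$, because a transverse excursion costs at least one extra unit of length; thus the effective walk is itself nearly degenerate as $\beta\to\infty$.

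\emph{Step 2: isolating the wall potential.} Since $\tilde\Phi=\Phi$ on clusters contained in $\calH_{+,\sfn}$, for $\gamma\in\calP_{+,\sfn}$ one may write $\tilde w(\gamma)=w(\gamma)\,e^{V(\gamma)}$ with $V(\gamma)=\sum_{\calC\not\subset\calH_{+,\sfn},\ \calC\cap\Delta_\gamma\neq\varnothing}(\tilde\Phi-\Phi)(\calC,\Delta_\gamma\cap\calC)$. A connected $\calC$ contributing to $V$ must reach from below $\partial\calH_{+,\sfn}$ to $\Delta_\gamma$, so its diameter is at least the height of $\gamma$ above $\partial\calH_{+,\sfn}$ at that spot, and at least $1$; combining this with \eqref{07} and the elementary count $\#\{\text{connected }\calC\ni x:\mathrm{diam}_\infty(\calC)=D\}\le e^{\kappa D}$ gives, for $\beta$ large, $|V(\gamma)|\le\Psi(\gamma):=C\,e^{-2\chi\beta}\,\mathcal L_{\mathrm{wall}}(\gamma)$, where $\mathcal L_{\mathrm{wall}}(\gamma)$ is essentially the number of bonds of $\gamma$ lying within $O(1)$ of $\partial\calH_{+,\sfn}$ (bonds at larger height being geometrically damped), and $2\chi\beta$ is just the decay rate of the smallest admissible cluster, of diameter $1$. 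With Step 1 this reduces \eqref{eq:1} --- and hence \eqref{eq:2tau}, upon applying $-\tfrac1{\beta d}\log$ and letting $d\to\infty$ --- to the single estimate
\[
\mathbb E_{\mu^{+}_{\sfx,\sfy}}\!\bigl[e^{\,\Psi}\bigr]\ \le\ C(\beta)\qquad\text{uniformly in }\sfx,\sfy\in\calB_{+,\sfn},
\]
$\mu^{+}_{\sfx,\sfy}$ being the polymer measure with weights $w$, endpoints $\sfx,\sfy$ and support $\calP_{+,\sfn}$: the upper bound in \eqref{eq:1} is this estimate with $c_2=C(\beta)$, and the lower bound follows from $\mathbb E_{\mu^{+}}[e^{V}]\ge\mathbb E_{\mu^{+}}[e^{-\Psi}]\ge e^{-\mathbb E_{\mu^{+}}[\Psi]}$ together with the first‑moment bound $\mathbb E_{\mu^{+}}[\mathcal L_{\mathrm{wall}}]=O(e^{\beta})$ and $\chi>\tfrac12$.

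\emph{Step 3: entropic repulsion (the crux).} Under $\mu^{+}_{\sfx,\sfy}$ the effective walk is a centered bridge of length of order $d$ conditioned to stay positive. The estimate to prove is that such a bridge spends only $O(e^{\beta})$ steps at transverse height $O(1)$ --- essentially all of them near its two endpoints --- and, more precisely, that $\mathcal L_{\mathrm{wall}}$ has an exponential moment of rate of order $e^{-\beta}$, uniformly in $d$; since $\chi>\tfrac12$ makes $e^{-2\chi\beta}=o(e^{-\beta})$, this gives $\mathbb E_{\mu^{+}}[e^{\Psi}]=1+o_\beta(1)$, so \eqref{eq:1} holds with $c_1(\beta),c_2(\beta)\to1$. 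The main obstacle is that the contacts of $\gamma$ with $\partial\calH_{+,\sfn}$ do \emph{not} form a renewal sequence --- backtracks, self‑interaction and the infinite range of $\Phi$ destroy the independence needed to apply renewal theory directly. I would circumvent this by invoking the identity of Alili and Doney \cite{AD} in the form \eqref{eq:AD-expBound}, which identifies the probability that a one‑dimensional walk runs from $x>0$ to $y>0$ in time $T$ while staying positive with a functional of the number of its ladder heights up to time $T$. This identity does double duty: it yields the matching lower bound on $q^{+}(\sfx,\sfy)$ needed to make sense of \eqref{eq:2tau}, and it pins down the law of the conditioned bridge near level zero, which is exactly the exponential‑moment input for $\mathcal L_{\mathrm{wall}}$. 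I expect essentially all of the work to lie in this step --- quantifying the competition ``$\sigma_\beta^{2}\asymp e^{-\beta}$ versus pinning strength $\asymp e^{-2\chi\beta}$'' uniformly in $d$ and in $\arg(\sfn)$, the worst case being the axis directions where $\sigma_\beta^{2}$ is smallest --- together with the technical task, in Step 1, of making the Ornstein--Zernike construction robust up to the wall in the presence of the infinite‑range potential; the remaining ingredients (cluster counting, an exponential Chebyshev bound, passage to the limit) are routine.
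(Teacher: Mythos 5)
Your overall strategy is the same as the paper's: an Ornstein--Zernike irreducible decomposition producing an effective random walk with transverse variance of order ${\rm e}^{-\beta}$, the bound $|\tilde\Psi-\Psi|\le C{\rm e}^{-2\chi\beta}$ times a height-damped wall local time (this is exactly \eqref{eq:path-A-weights} and \eqref{eq:w-weights}), and Alili--Doney ladder-height identities to beat the fact that wall contacts are not a renewal sequence. Steps 1 and 2 are sound and reduce the theorem to precisely the paper's Theorem~\ref{thm:nopinning} via \eqref{eq:q+-crude}. But at that point your Step 3 is a statement of the goal, not a proof: the claim that under the repelled polymer measure the wall local time $\mathcal L_{\rm wall}$ has an exponential moment at rate of order ${\rm e}^{-\beta}$, uniformly in $d$ and in $\arg(\sfn)$, \emph{is} the theorem (it is equivalent to Claim (A) of Theorem~\ref{thm:TwoBounds}), and invoking the Alili--Doney identity by name supplies no mechanism for it. The paper does not in fact prove such an exponential-moment bound directly; it encodes the pinning through the operator recursion \eqref{eq:Gbound-1}--\eqref{eq:Recursion}, reduces everything to $\rho_\delta\le 1+a_\delta+b_\delta\rho_\delta$ with $a_\delta<\infty$, $b_\delta<1$ (Proposition~\ref{lem:Plus-bounds}), and proving that requires the whole of Sections~\ref{sec:A-D-RW}--\ref{sec:provaprop10}: a two-sided local limit theorem \eqref{eq:LB-P} that interpolates between Poissonian and Gaussian regimes (because for nearly axis-parallel walls the walk makes ${\rm e}^{b_\epsilon}$ horizontal steps before any transverse move), ladder-height counting for the three basic steps (Lemmas~\ref{lem:Nplus}, \ref{lem:Nminus}), and a Wulff-shape curvature estimate \eqref{eq:curv-bound} to change the tilt when endpoints are moved along the wall. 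None of this, nor any substitute for it, appears in your outline; your lower bound likewise defers to the unproven first-moment estimate $\mathbb E_{\mu^+}[\mathcal L_{\rm wall}]=O({\rm e}^{\beta})$, which in the paper is \eqref{eq:lb-exps} and again rests on Proposition~\ref{lem:Plus-bounds}.

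Two further points you gloss over would have to be faced by any implementation of your plan. First, the pinning potential has infinite range and acts on whole irreducible blocks: the relevant weight is $\phi_\beta(\gm{\ell})=\abs{\gm{\ell}}{\rm e}^{-\chi\beta(2+d_\ell)}$ with $d_\ell$ the distance of the \emph{diamond} $D(\sfR_{\ell-1},\sfR_\ell)$ to the wall, so the ``number of bonds within $O(1)$ of the wall'' picture must be organized at the level of the renewal blocks, which is what makes the recursion (rather than a bare exponential-Chebyshev bound on a local time) the natural device. Second, uniformity in $\arg(\sfn)\in[\frac{\pi}{2},\frac{3\pi}{4}]$ is not cosmetic: for irrational slopes the transverse walk is non-lattice and the ladder structure degenerates, which is why the paper needs the separate estimates \eqref{eq:Nplus-2}, \eqref{eq:Nminus-2} involving $\ell_\sfn(\sfs)$. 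As it stands, your proposal reproduces the paper's reduction but leaves its quantitative core --- the competition ``${\rm e}^{-2\chi\beta}$ pinning versus ${\rm e}^{-b_\epsilon}$ variance'' made uniform in the endpoint distance and wall direction --- unproved. Finally, your parenthetical claim that one can take $c_1(\beta),c_2(\beta)\to1$ is stronger than what the paper establishes (its lower-bound constant is $\frac{p_\beta}{2}{\rm e}^{-2K_\beta/p_\beta}$ with $p_\beta$ not shown to tend to $1$) and should not be asserted without proof.
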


\subsection{Examples,  applications and perspectives}

\label{sec:appli}
Here we give some applications of our main result and mention some future generalizations. One of the main
points here is to emphasize that contour ensembles with ``modified
potential landscape'' as in
Section \ref{sec:modi} arise quite naturally in low-temperature statistical
mechanics, without any need to introduce the ``landscape
modification'' by hand.  Since this section serves mainly as a motivation, we will
skip technical details and concentrate on the main ideas.

Consider the two-dimensional Ising model at low temperature
$\beta>\beta_c$, in the upper half-plane $\calH_+$. Put $+$ boundary
conditions on the horizontal line $y=0$, except along the segment
joining  $A=(0,0)$ to $B=(L,0)$, where the boundary condition is
$-$. Then, there is a unique open contour $\gamma$, joining $A$ to
$B$ and contained in $\calH_+$, separating $+$ from $-$ spins. For
$\beta $ sufficiently large the weight of $\gamma$
is proportional to \cite{DKS}
\begin{eqnarray}
  \label{eq:6}
\tilde w(\gamma)=
\exp\Big(-\beta|\gamma|+\tilde \Psi(\gamma)\Big):=
\exp\Big(-\beta|\gamma|+\sum_{\calC: \calC\cap
  \Delta_\gamma\ne\varnothing} \tilde  \Phi(\calC,\Delta_\gamma\cap\calC)\Big) {\bf 1}_{\gamma\subset \calH_+}
\end{eqnarray}
where
\[
 \tilde \Phi(\calC,\Delta_\gamma\cap\calC)=
\Phi(\calC,\Delta_\gamma\cap\calC){\bf 1}_{\calC\subset\calH^+}
\]
and
the potentials $\Phi$ satisfy properties (P1)-(P3) of Section
\ref{sec:themain}, in particular with $\chi=2$ in \eqref{07}.
Actually, for the specific case of the nearest-neighbor Ising model
$\Phi(\calC,\Delta_\gamma\cap\calC)$ depends only on the first
argument.

The contour ensemble with weights $\tilde w(\gamma)$ differs
from the one with weights
\[
w(\gamma)=\exp\Big(-\beta|\gamma|+ \Psi(\gamma)\Big):=\exp\Big(-\beta|\gamma|+\sum_{\calC: \calC\cap
  \Delta_\gamma\ne\varnothing} \Phi(\calC,\Delta_\gamma\cap\calC)\Big) {\bf 1}_{\gamma\subset \calH_+}
\]
in that the potentials $\Phi$ with $\calC$ intersecting the lower
half-plane are missing: since the potentials have no definite sign,
this might result in an effective attractive pinning interaction with the boundary. If
this pinning effect prevailed, the partition function $\tilde
Z_L(\beta)$ associated to
the ensemble $\tilde w(\gamma)$ would be exponentially (in $L$) larger
than the partition function $Z_L(\beta)$ associated to $w(\gamma)$,
which itself is known to
behave like $\approx \exp(-\beta L \tau_\beta)$, with $\tau_\beta$ the
surface tension in the horizontal direction. Our Theorem
\ref{NoPin} shows that this does not happen (at least for $\beta$ large),
i.e. the surface tension is not changed by the presence of the system
boundary and actually the ratio of partition functions is bounded.

This implies that the laws $\tilde P$ and
$P$, associated to ensembles $\tilde w$ and $w$, are equivalent, in
the sense that an event $A$ that has small probability (for $L$ large)
w.r.t one of them has also small probability w.r.t. the other.
Indeed,
one has
\begin{eqnarray}
  \label{eq:7}
  \tilde P(A)=\frac{E(A; e^{\tilde \Psi(\gamma)-\Psi(\gamma)})}{E( e^{\tilde \Psi(\gamma)-\Psi(\gamma)})}.
\end{eqnarray}
The denominator is just the ratio of partition functions $\tilde
Z_L(\beta)/Z_L(\beta)$ and is bounded above and below by constants. As
for the numerator, via Cauchy-Schwartz it is upper bounded by
\[
\sqrt{P(A)}\sqrt{E(e^{2(\tilde \Psi(\gamma)-\Psi(\gamma))})}.
\]
The second expectation is bounded by a constant again thanks to
Theorem \ref{NoPin} (the factor $2$ just implies that the landscape
modification is a bit different in this case), so if $P(A)$ is
small also $\tilde P(A)$ is. The other bound is obtained similarly.

For the nearest-neighbor Ising model, some results of this kind may be
derived also from the exact solution \cite{MW}. A very different situation
occurs if one adds a boundary magnetic field which may beat entropic repulsion
or even attract far away contours \cite{PV99}. Note that the results of the
latter paper go well beyond exact solutions.

In our next example
(SOS model),
 no exact solution is available and our Theorem \ref{NoPin} seems
unavoidable, though in some cases FKG inequalities allow to bypass the
interaction-versus-repulsion problem \cite{CLMST1,CLMST2,CMTrep}.
The $(2+1)$-dimensional SOS model in a domain $\Lambda\subset \bbZ^2$ is
defined
through the collection of heights $\eta_x\in \bbZ, x\in \Lambda$ and
the Hamiltonian is the sum of the absolute value of the height
gradients between nearest-neighboring heights. If again
we take the
model in the upper half-plane, with boundary condition $\eta_x=0$ on
the horizontal line $y=0$ except along the segment from $A$ to $B$,
where heights are fixed to $\eta_x=1$, there exists a unique open contour
$\gamma$ joining $A$ to $B$, such that heights just below $\gamma $
are at least $1$ and just above $\gamma$ they are at most $0$.
Again, it is proven in \cite[Appendix 1]{CLMST1} that, for $\beta$
large, the distribution of $\gamma$ has weights of the form
\eqref{eq:6} and the results mentioned for the Ising model hold in
this case too.

The works \cite{CLMST1,CLMST2}
considered the SOS model in a $L\times L$ square box $\Lambda$, with
hard-wall constraint: $\eta_x\ge 0$ for every $x\in \Lambda$. Along the
route to prove results like dynamical metastability or laws of large
numbers and  cube-root
equilibrium fluctuations of the macroscopic level lines, one of the main technical
problems that was encountered there boiled down  to prove that the ratio of
partition functions $\tilde Z_L(\beta)/Z_L(\beta)$ introduced above is not
exponentially large, which is given directly by our present Theorem \ref{NoPin}. In
\cite{CLMST1,CLMST2}, instead, the problem had to be avoided via a
rather involved chain of monotonicity arguments that are not robust, since
they rely on the FKG inequalities satisfied by the SOS model.

Another problem encountered in \cite{CLMST1,CLMST2,CMTrep} was the
following: the various level lines of the SOS model at different heights
interact among themselves, in a way  very similar to how the contour
$\gamma$ of Section \ref{sec:modi} interacts with the line $\calB_{+,\sfn}$.
On large scales this mutual interaction should be negligible with respect to the
entropic repulsion (contours cannot cross) and the contours should not stick together.
Again, in  \cite{CLMST1,CLMST2,CMTrep} this problem was avoided via a
complicated monotonicity argument, while  the techniques developed here
could be generalized to prove directly the absence of pinning between
two or several interacting SOS contours, in analogy with Theorem
\ref{NoPin}. 
 We believe that the same type of ``no-pinning''
results will be instrumental in going beyond the results of
\cite{CLMST2} (where the contour fluctuations are proven to be of
order $L^{1/3}$) and to obtain the full scaling limit (of Airy diffusion
type) of ensemble of SOS level lines in presence of hard wall.
Scaling limits to Airy (or Ferrari-Spohn \cite{FS05})
diffusions
were recently
derived in the context of (directed) random walk bridges under rather
general tilted area constraints \cite{ISV14}.

Closely related is the model of facet formation \cite{SENYADIMA},
which is a combination of $(2+1)$-SOS interface with a high and low
density Bernoulli bulk fields (of particles) above and below it.
The system is modulated by the canonical constraint
$N^3 + aN^2$
on the total number of
particles, where $N$ is the linear size of the system.
As the parameter $a$ grows the system undergoes a sequence of first order
transitions in terms of number of macroscopic facets. Facets are
SOS-contours which interact exactly as it was described above, and
``no-pinning'' results become imperative for an analysis of the model
both on the level of thermodynamics and on the more refined level of
fluctuations. On the level of thermodynamics limiting facet
shapes look like a stack of optimal Wulff TV-shapes (flat edges connected
by portions of Wulff shapes - see \cite{SS96} for the
corresponding construction for the constrained 2D Ising model).
 On the level of fluctuations
one expects scaling limits to Airy diffusions
for portions of interfaces along flat edges.

\subsection{Organization of the paper.}
Below are brief guidelines for reading the paper.
\smallskip

\noindent
{\bf Section~\ref{sec:Reformulation}.}
In general, expansion of cluster weights $\Phi$
in \eqref{06} leads to summands
of both positive and negative sign. In Section~\ref{sec:Reformulation}
we rewrite weights in such a way that all terms in the low temperature
expansion become non-negative. This sets up the stage for a probabilistic
analysis of ensembles of decorated contours (with weights \eqref{eq:qweight} and
\eqref{eq:qweightn} without and, respectively, with interactions with
the wall).
In this reformulation our main result Theorem~\ref{NoPin} becomes
 Theorem~\ref{thm:nopinning}. For the rest of the paper we shall focus
 on proving the latter.

 The relation between induced free and pinned weights of
open contours is formulated in the two-sided bound \eqref{eq:path-A-weights}.
Accordingly, the relation between  free and pinned partition functions  of
 ensembles of open contours appears in the crucial
 (albeit crude) two-sided bound \eqref{eq:q+-crude}.
In the sequel we shall work on the level of resolution suggested by
 latter inequalities.
 \smallskip

 \noindent{\bf Section~\ref{sec:Irreducible}.}
 Irreducible decomposition
  \eqref{eq:irreducible}
 of decorated contours is developed
 in Section~\ref{sec:Irreducible}.
 Since weights of decorations become
 exponentially small as $\beta\to\infty$, this decomposition and
 its properties (most importantly the mass-gap estimate
 \eqref{eq:exp-tails})
 are  inherited  from irreducible decomposition of ensembles of ``naked''
 open contours with weights ${\rm e}^{-\beta\abs{\gamma}}$.
 The output of the  irreducible decomposition is formulated in Theorem~\ref{thm:OZ}.
 Renewal structures we analyze are generated by probability distributions
  \eqref{eq:OZ-weights} on the alphabet  of irreducible animals.
  Mass-gap estimate \eqref{eq:exp-tails} and the upper bound in
  \eqref{eq:q+-crude}   enable a reformulation
  of the upper bound in Theorem~\ref{thm:nopinning} as \eqref{eq:Target1}.
  \smallskip

  \noindent{\bf Section~\ref{sec:ERW}.}
 Irreducible decomposition of decorated contours gives rise
 to an effective random walk, which is
 introduced in Section~\ref{sec:ERW}. Contours $\gamma\subset \calH_{+, \sfn}$
 correspond to effective walk which stays above the wall. On the other hand,
 the constraint of effective random walk to stay positive is less restrictive
 than $\gamma\subset \calH_{+, \sfn}$, and in order to control the probability
 of the latter one needs to show that effective walks are sufficiently repelled
 from the wall. The main facts we need to prove about effective random walks are
 collected in Theorem~\ref{thm:TwoBounds}. In the end of Section~\ref{sec:ERW}
 we explain how (A) and (B) of Theorem~\ref{thm:TwoBounds}
 imply our target upper bound \eqref{eq:Target1} and hence the upper bound of
 Theorem~\ref{thm:nopinning}.
 \smallskip

 \noindent
 {\bf Section~\ref{sec:Thm-TB}}  is devoted to the proof of Theorem~\ref{thm:TwoBounds}.
 The arguments are based on Proposition~\ref{lem:Plus-bounds} and
 Proposition~\ref{prop:P-plus-bounds}, whose proof is relegated to
 Section~\ref{sec:provaprop10}.\newline
 The lower bound of Theorem~\ref{thm:nopinning} is
 established in Subsection~\ref{sec:lb} together with
 (B) of Theorem~\ref{thm:TwoBounds}. These are statements about entropic
 repulsion of
 effective walks from the wall $\calB_{+, \sfn}$.
 In order to prove Part (A) we encode
 the interaction with the wall as the recursion relation \eqref{eq:Recursion}
 for the quantity $\rho_\delta$ which is defined in \eqref{eq:rho-delta} and
 which appears in (A) of Theorem~\ref{thm:TwoBounds}. This recursion is rewritten
 as $\rho_\delta \leq a_\delta +b_\delta \rho_\delta$ in \eqref{eq:Recursion1},
 and (A) of Theorem~\ref{thm:TwoBounds} follows from
 Proposition~\ref{lem:Plus-bounds}.
 \newline
 Proving  Proposition~\ref{lem:Plus-bounds} and
 Proposition~\ref{prop:P-plus-bounds} are the only remaining tasks
 after completion of
  Section~\ref{sec:Thm-TB}.
 \smallskip

 \noindent
 {\bf Section~\ref{sec:A-D-RW}.}
Proofs of Proposition~\ref{lem:Plus-bounds} and of
Proposition~\ref{prop:P-plus-bounds}
are  heavily based on
 fluctuation and Alili-Doney type estimates on the effective random walk
 which are derived in Section~\ref{sec:A-D-RW}. Sharp asymptotics for the
 effective random walk are formulated in \eqref{eq:LB-P} of
 Proposition~\ref{prop:LB-P}.
 The quantity $b_{\sfx}$ (or later $b_\epsilon$) is subject to asymptotic
 relations of Proposition~\ref{prop:aep-bep}. Note that \eqref{eq:LB-P} are
 quite different from  usual Gaussian asymptotics.  Rather they appear as a mixture of
  Gaussian and Poissonian asymptotics. The corresponding decomposition of
  the effective random walk
  \eqref{eq:Rk-repr}
  is described in Subsection~\ref{sub:Rk}, which
  ends with the proof of Proposition~\ref{prop:LB-P}.
  This is one of two
  places where we make use of a particularly simple structure of
  open contours in models of Ising Polymers - at low temperatures the Poissonian
  part ($\sum\xi_i \sfU_i$ in \eqref{eq:Rk-repr}) is just a random staircase
  with two possible steps: right and up.

  As in \cite{CIL},   asymptotics of effective random walks constrained to
  stay above the wall
  (Subsection~\ref{sub:AD}) are based on Alili-Doney type identities
  \eqref{eq:AD-expBound}. However, one needs to deal with
  in general
  non-lattice directions of the wall and, most importantly, with
  degeneracies and non-Gaussian (on short scales) behaviour of the effective
  walks. These issues are addressed in Lemma~\ref{lem:Np-k},
  Lemma~\ref{lem:Nplus} and Lemma~\ref{lem:Nminus}. The latter Lemmas feature
  upper bounds on the expected number of ladder heights, and here we make the
  second use of the simplified structure of open contours in Ising Polymers:
  in order to derive these estimates we consider only three basic steps
   \eqref{eq:BasicProb} of the effective walks.
   \smallskip

 \noindent
 {\bf Section~\ref{sec:provaprop10}.} In this concluding section we prove
 Proposition~\ref{lem:Plus-bounds} and
 Proposition~\ref{prop:P-plus-bounds}.
 \smallskip

\noindent
\paragraph{\bf  Notations for constants and norms.}
It will be crucial in the whole work to be precise on which  estimates
are uniform with respect to $\sfn \in \mathbb S^1,\beta$ large and which are not.
Therefore, every time some constant $c(a,b,\dots)$ appears in an estimate, it
will be understood that it is \emph{not} uniform w.r.t parameters
$a,b,\dots$, while it is uniform w.r.t. everything else. On the other hand,  numerical values
of constants $c_1 (a,b,\dots) , c_2 (a,b,\dots), \dots$  may change between
different subsections.\newline
A particular role will be played by a mass gap constant $\nu_{\sfg} >0$ (see
\eqref{eq:exp-tails}) which is {\em independent} of $\beta$ and will be fixed throughout
the paper. In Sections 5-8 we also fix a positive constant
$\delta \in (0, \frac{\nu_{\sfg}}{4}]$.
\newline
$\abs{\cdot}_1$ is the $L^1$-norm of either $\bbR^2$ or, in most cases,  $\bbZ_*^2$. The notation
$\abs{\cdot}$ is reserved for number of edges in contours.

\section{Reformulation of the Main Result}
\label{sec:Reformulation}
\subsection{Hidden variables and independent increments representation}

The potentials $\Phi\left(  \cdot\right) ,\tilde\Phi\left(  \cdot\right)$ take values of both signs. For our
purposes it is more convenient if they take only positive values,  so
we manipulate them to obtain this
%goal.
property. The advantage is that, this
way,
the weights $q([\gamma,\ucalC])$ and $q^{+,\sfn}([\gamma,\ucalC])$ in
\eqref{eq:qweight}, \eqref{eq:qweightn} below are positive and can be
considered as a (non-normalized) probability law.
{ This construction goes back to  \cite{DS}.}
Given a contour $\gamma$, we define the set of (not necessarily
distinct) bonds
\[
\nabla_\gamma=\cup_{b=(x,x+e)\in \gamma}\{b,b+e,b-e\}.
\]
For
$b\in\nabla_\gamma$ the
multiplicity of $b$ is the number of bonds $b^{\prime}%
\equiv\left(  x,x+e\right)  $ in $\gamma,$ for which either
$b=b^{\prime},$ or else $b=b^{\prime}\pm e.$

Let $b\in\mathbb{Z}_{\ast}^{2}$ be some fixed bond. Define the value $c\left(
\beta\right)  $ by%
\[
c\left(  \beta\right)  =\sum_{\calC\subset\mathbb{Z}^{2}:\calC\cap b\neq\varnothing
}\exp\left\{  -\chi
%\left(
{
\beta
}
%-\beta_{0}\right)
({\rm diam}_\infty(\calC)+1)
\right\}  ,
\]
and for every connected $\calC$ and $\gamma$ put%
\[
\Phi^{\prime}\left(  \calC,\gamma\right)  =\Phi\left(  \calC,\Delta_\gamma\cap
\calC\right)  +|\calC\cap\nabla_\gamma|\exp\left\{  -
\chi
%\left(
\beta
%-\beta_{0}\right)
\left(
%\vert \delta \calC
{\rm diam}_\infty(\calC)+1
%\right\vert
\rb
 \right\}  ,
\]
where $|\calC\cap\nabla_\gamma|$ is the number of bonds in $\nabla_\gamma  $ that the set  $\calC$ intersects with, each bond
counted with its multiplicity.
Note that $\Phi'$ depends on $\gamma$ through both $\calC\cap\Delta_\gamma$ and
$\calC\cap \nabla_\gamma$, and also that
$\calC\cap\Delta_\gamma\ne\varnothing$
implies  $\calC\cap\nabla_\gamma\ne\varnothing$ (while the converse does
not
necessarily hold).

{   Clearly, by this we achieve that
$$\Phi^{\prime}\left(  \calC,\gamma
\right)  \ge0 \text{ if} \nabla_\gamma\cap \calC\neq\varnothing ,$$
 while at the same time the}
function $\Phi^{\prime}$ satisfies the same decay estimate $\left(  \ref{07}\right)
$ (with the constant $\beta_{0}$ slightly changed). We warn the reader that, for lightness of
notation, from now on $\beta_0$ will be simply removed from all formulas.
Note also that by definition the function $\Phi^{\prime}\left(  \calC,\gamma
\right)  $ inherits the translation invariance property.

It is easy to check (using the fact that $\nabla_\gamma$ contains
three bonds for each bond of $\gamma$) that the weight (\ref{06}) can be rewritten as
\begin{equation}
w\left(  \gamma\right)  =\exp\Bigl[ -\left(  \beta+3c\left(  \beta\right)
\right)  \left\vert \gamma\right\vert +\sum_{\calC:\calC\cap\nabla_\gamma
\neq\varnothing}\Phi^{\prime}\left(  \calC,\gamma\right)  \Bigr]  \label{eq:2}%
\end{equation}
and analogously for $\tilde{w}(\gamma)$.

\subsection{Representation of interfaces in terms of animals.}
{\bf Interfaces without pinning.}
Let us consider first interfaces without any wall or pinning potential.
Interfaces are modeled
by the following
ensemble of random
animals $\Gamma = [\gamma , \ucalC ]$, with
$\gamma$ an open contour on $\bbZ_*^2$
and $\ucalC = \lbr \calC_i\rbr$ a collection of connected
subsets of $\bbZ^2$, called `clusters'.
To an animal $\Gamma$ we associate
{   the
weight}
\be
\label{eq:qweight}
q ([\gamma , \ucalC ] ) = {\rm e}^{-\beta\abs{\gamma}}\prod_i \Psi (\calC_i ;\gamma )
%\1_{\gamma\subseteq A}
\
\text{ and we define}\
q (\gamma ) = \sum_{\ucalC} q ([\gamma , \ucalC ] )
\ee
where
\[\Psi(\calC;\gamma)= \left[\exp(\Phi'(\calC,\gamma))-1\right]\1_{\lbr\calC\cap \nabla_\gamma\neq 0\rbr}.
\]
We immediately recognize from \eqref{eq:2} that, modulo redefining
$\beta+3c(\beta)$ to be $\beta$, we have $w(\gamma)=q(\gamma)$.
The ``potential'' $\Psi$ is non-negative, translation-invariant,
 and is local in the
sense that it depends on $\gamma$ only through $\calC\cap \nabla_\gamma$.
We define the two-point function
\be
\label{eq:two-point}
G_\beta (\sfx ) = \sum_{\gamma : 0_*\mapsto\sfx }q (\gamma ).
\ee
It is well known that for the low temperature ($\beta$ large)  models which
we consider here,
the surface tension in \eqref{eq:2tau} exists. One can extend $\tau_\beta$
to a (strictly convex) function on $\mathbb R^2$, by letting
$\tau_\beta(\sfx )=|\sfx|\tau_\beta ( \sfn_\sfx )$,
where $
\sfn_\sfx = \sfx/\abs{\sfx}
$.
Recall that  we assume that
the surface tension  possesses the discrete reflection/rotation symmetries of $\bbZ^2$.
 Other properties of the surface
tension are given in Theorem \ref{thm:OZ}.
Also, it is known that
for all $\beta$ large
there exists a positive  locally analytic function $C (\beta , \cdot )$ on $\bbS^1$, such that
\be
\label{eq:surface-tension}
G_\beta (\sfx) = \frac{
C (\beta , \sfn_\sfx ) (1 +{\mathrm o} (1)  )
}{\sqrt {|\sfx|}}{\rm e}^{- \tau_\beta (\sfx)},
\ee
uniformly in $
\abs{\sfx} \to \infty
$. 
We will need later also the ``restricted two-point function''
\be
\label{eq:two-point-restr}
G_\beta \lb \sfx~|~\calP_{+ , \sfn
}\rb:=
\sumtwo{\gamma : 0_*\mapsto\sfx }{\gamma\in \calP_{+,\sfn}}q (\gamma )
\ee
(in general, we will write $G_\beta \lb \sfx~|~A\rb$ for the two-point
function with paths restricted to some set $A$).

\paragraph{\bf Interfaces with pinning.}
In analogy with \eqref{eq:qweight},
given animal $\Gamma$ we define
weights
\be
\label{eq:qweightn}
q^{+,\sfn} ([\gamma , \ucalC ] ) = {\rm e}^{-\beta\abs{\gamma}}\prod_i \Psi^{+,\sfn} (\calC_i ;\gamma )
\1_{\gamma\in \calP_{+ ,\sfn}}\ {\rm and}\
q^{+,\sfn} (\gamma ) = \sum_{\ucalC} q^{+,\sfn} ([\gamma , \ucalC ] )
\ee
with $\calP_{+ ,\sfn}$ the set of paths which stay inside $\calH_{+ ,
  \sfn}$
and
\[
\Psi_\beta^{+,\sfn} (\calC ;\gamma
)=\left[\exp(\tilde\Phi'(\gamma,\calC))-1
\right]\1_{\lbr\calC\cap \nabla_\gamma\neq 0\rbr}
\ge0.
\]
Again, we recognize that $q^{+,\sfn}(\gamma)$ is
just
$\tilde w(\gamma)$ (modulo redefining $\beta+3c(\beta)\mapsto \beta$).
The decay assumption \eqref{07} (and the analogous one for
$\tilde\Phi$) implies the following: for every $\gamma \in \calP_{+ , \sfn}$,
\be
\label{eq:path-A-weights}
 q  (\gamma ) {\rm exp}\Bigl[ -\sum_{\sfu\in\gamma}
{\rm e}^{-\chi
%(
\beta
%-\beta_0)
\lb
%1\vee
\dd_{\sfn} (\sfu ) +1 \rb }\Bigr] \le q^{+,\sfn} (\gamma )
\leq q  (\gamma ) {\rm exp}\Bigl[ \sum_{\sfu\in\gamma}
{\rm e}^{-\chi
\beta
%-\beta_0)
\lb
%1\vee
\dd_{\sfn} (\sfu ) +1 \rb }\Bigr]
\ee
with $\sfu\in \bbZ_*^2$ the endpoints of bonds of $\gamma$.

In analogy with  \eqref{eq:two-point}, we define the two-point function
``with pinning''
\be
\label{eq:two-pointn}
G^{+,\sfn}_\beta (\sfx ) = \sum_{\gamma : 0_*\mapsto\sfx }q^{+,\sfn} (\gamma )
\ee
(remark that only contours $\gamma\in\calP_{+,\sfn}$ contribute to the
sum).
{   Again, we will write $G^{+,\sfn}_\beta \lb \sfx~|~A\rb$ for the two-point
function with paths restricted to some set $A$.}
Then, the main claim \eqref{eq:1} in Theorem \ref{NoPin} can be reformulated as follows:
\begin{theorem}
 \label{thm:nopinning}
Assume that \eqref{eq:path-A-weights} holds with $\chi >1$.
Then there exists $\bar\beta = \bar\beta(\chi )$
such that the following holds: For any $\beta >\bar\beta$ there exist two
constants $c_1 (\beta )$ and $c_2 (\beta )$
such that
\be
\label{eq:nopinning}
c_1 (\beta ) G_\beta \lb \sfx~|~\calP_{+ , \sfn
}\rb \,
\le \,
G_\beta^{+ ,\sfn}  (\sfx )\,  \le\,  c_2 (\beta )\;
G_\beta \lb \sfx~|~\calP_{+ , \sfn
}\rb.
\ee
uniformly in $\sfx\in \calB_{+ ,\sfn} $  and $\arg(\sfn )\in
{
[-\frac{\pi}{4}, \frac{3\pi}{4}]
}
$.
\end{theorem}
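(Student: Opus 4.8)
The entire statement reduces to one estimate on the pinning functional $R(\gamma):=\sum_{\sfu\in\gamma}{\rm e}^{-\chi\beta(\dd_\sfn(\sfu)+1)}$. Indeed, \eqref{eq:path-A-weights} gives $q(\gamma)\,{\rm e}^{-R(\gamma)}\le q^{+,\sfn}(\gamma)\le q(\gamma)\,{\rm e}^{R(\gamma)}$ for every $\gamma\in\calP_{+,\sfn}$, so summing over contours $\gamma:0_*\mapsto\sfx$ that stay in $\calH_{+,\sfn}$ and writing $\bbE^{\sfx}_{+,\sfn}$ for the expectation under the probability measure proportional to $q(\gamma)\,\mathbf 1_{\{\gamma\in\calP_{+,\sfn}\}}$,
\[
\bbE^{\sfx}_{+,\sfn}\!\big[{\rm e}^{-R}\big]\ \le\ \frac{G^{+,\sfn}_\beta(\sfx)}{G_\beta(\sfx\,|\,\calP_{+,\sfn})}\ \le\ \bbE^{\sfx}_{+,\sfn}\!\big[{\rm e}^{R}\big].
\]
Since $0\le R\le {\rm e}^{R}-1$ and $\bbE[{\rm e}^{-R}]\ge {\rm e}^{-\bbE[R]}$ by Jensen, a single uniform bound $\bbE^{\sfx}_{+,\sfn}[{\rm e}^{R}]\le c_2(\beta)$ (over $\sfx\in\calB_{+,\sfn}$ and $\arg(\sfn)\in[-\frac{\pi}{4},\frac{3\pi}{4}]$) yields \eqref{eq:nopinning} with $c_1(\beta):={\rm e}^{-c_2(\beta)}$. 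It is natural to split this: the first moment bound $\bbE^{\sfx}_{+,\sfn}[R]\le C$ already gives the lower inequality in \eqref{eq:nopinning}, and I would prove it together with the repulsion estimate (B) below; the full exponential moment --- the upper inequality --- needs the extra resummation (A).

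To analyze $\bbE^{\sfx}_{+,\sfn}[\,\cdot\,]$ I would first equip the free decorated ensemble $\{q(\gamma)\}$ with its Ornstein--Zernike renewal structure, following \cite{CIV03,CIL,IV08}: an irreducible decomposition of $\gamma$ into irreducible ``animals'' whose signed displacements are i.i.d.\ increments of an effective random walk $S$ with exponentially decaying step law and a $\beta$-independent mass gap, inherited from the decomposition of the ``naked'' contours ${\rm e}^{-\beta|\gamma|}$ and controlling $G_\beta(\sfx)$ via \eqref{eq:surface-tension}. Under this dictionary the event $\{\gamma\in\calP_{+,\sfn}\}$ forces $S$ to stay above the wall $\calB_{+,\sfn}$ but is strictly stronger than that (a contour whose cone points lie above the wall can still dip below between them), and $R(\gamma)$ is, up to the irreducible blocks that actually touch the wall, controlled by the occupation of the low heights by $S$. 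The analytic heart is that the $\beta|\gamma|$ term makes the component of $S$ perpendicular to $\sfn$ strongly degenerate: for $\arg(\sfn)$ in the admissible range the increments decompose into a small-variance Gaussian piece and a Poissonian ``staircase'' piece with only right and up steps --- here the elementary geometry of low-temperature Ising contours (the four basic steps of Figure~\ref{fig:Animals}) is essential. I would establish the resulting non-Gaussian local limit asymptotics for $S$ (a mixture of Gaussian and Poissonian type) and the analogous asymptotics for $S$ conditioned to stay positive, the latter through the Alili--Doney identity \eqref{eq:AD-expBound}, which rewrites the positivity probability from $x>0$ to $y>0$ in time $T$ in terms of the expected number of ladder heights up to $T$; that ladder-height count is then estimated directly by a fluctuation argument using only three basic effective steps, which also takes care of non-lattice wall directions.

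These inputs feed two statements about the wall-constrained effective walk. \emph{(B) Entropic repulsion.} One shows that the expected number $N_h$ of contour sites at perpendicular distance $h$ from $\calB_{+,\sfn}$ obeys a bound that is polynomial in $h$ and at most polynomial in ${\rm e}^{\beta}$, uniformly in $\sfx\in\calB_{+,\sfn}$ and in the length of $\gamma$; consequently $\bbE^{\sfx}_{+,\sfn}[R]=\sum_{h\ge1}{\rm e}^{-\chi\beta(h+1)}\bbE[N_h]$ is summable and $O(1)$ precisely under the hypothesis $\chi>1$, which is why the threshold on $\chi$ is sharp (cf.\ the computation after Theorem~\ref{NoPin}). \emph{(A) Exponential moment.} Encoding the successive returns of $S$ to a fixed neighbourhood of the wall as a recursion $\rho_\delta\le a_\delta+b_\delta\rho_\delta$ for the relevant generating quantity $\rho_\delta$, one checks that for $\beta$ large the per-excursion factor $b_\delta$ is $<1$ (near-wall excursions are rare by the mass gap, and the extra potential they see is exponentially small), so $\rho_\delta\le a_\delta/(1-b_\delta)=O(1)$ and hence $\bbE^{\sfx}_{+,\sfn}[{\rm e}^{R}]\le c_2(\beta)$. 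Combining with the reduction of the first paragraph yields \eqref{eq:nopinning}, and therefore (undoing the reformulation of Section~\ref{sec:Reformulation}) the bound \eqref{eq:1} and the equality of surface tensions \eqref{eq:2tau}.

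The main obstacle is the competition built into the problem: the pinning is exponentially weak (strength ${\rm e}^{-\chi\beta}$ at the wall) but of \emph{infinite range}, while the contour's perpendicular fluctuations are themselves exponentially degenerate, and --- as the directed-walk analogue shows --- the depinning threshold scales to zero together with the step variance. Deciding the contest therefore requires quantitatively sharp control of the entropic repulsion rather than soft order-of-magnitude estimates; this is what forces the non-Gaussian fluctuation and ladder-height asymptotics for the effective walk and the Alili--Doney reduction of the positivity probability. The infinite range of $\tilde\Phi-\Phi$ is exactly what makes it necessary to bound the exponential moment of $R$ and not merely its mean, which is the point of the recursion in (A).
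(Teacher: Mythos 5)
Your proposal is correct and follows essentially the same route as the paper: the sandwich via \eqref{eq:path-A-weights}/\eqref{eq:q+-crude}, the Ornstein--Zernike irreducible-animal decomposition and the effective random walk with its Gaussian-plus-Poissonian step structure, the Alili--Doney ladder-height estimates for the wall-constrained walk, the recursion $\rho_\delta\le a_\delta+b_\delta\rho_\delta$ with $b_\delta<1$ for the upper bound, and the entropic-repulsion/first-moment argument for the lower bound. The only cosmetic difference is that you obtain the lower bound by Jensen under the $\calP_{+,\sfn}$-restricted contour measure, whereas the paper conditions the effective walk on $\frR^{+,\sfn}$ and combines Markov's inequality with \eqref{eq:ClaimB-cond}; the quantitative inputs needed are identical.
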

We will see that the most difficult case is when % $\sfx\in\calH_{+ , \sfn} $ lies close to
$\sfn$ is a lattice direction.
We shall prove Theorem~\ref{thm:nopinning} uniformly in
 ${\rm arg}(\sfn )\in [\frac{\pi}{2} , \frac{3\pi}{4}]$:
the other cases will follow by lattice symmetries.

The result actually holds also if the endpoints of the contour are
not on the line $\mathcal B_{+,\sfn}$ (and the proof is easier).
\smallskip

\noindent
\paragraph{\bf Convention for lattice notation.} For historic reasons it was natural
to define contours as sets of edges on the dual lattice $\bbZ_*^2$. However, as far as
formulas are concerned, it is  more convenient to work with the direct lattice
$\bbZ^2$. From now on we shall identify $\bbZ_*^2$ with $\bbZ^2$ via the map
$\sfu\mapsto \sfu - \sf 0_*$. Under this map the positive half-plane should be redefined
as
\be
\label{eq:Hplus-new}
\calH_{\sfn}^+  = \lbr \sfx : \sfx\cdot \sfn \geq 0 \rbr .
\ee
Similarly, under the above convention $\calP_{+, \sfn}$ is the set of paths
$\gamma = (\gamma_0, \dots , \gamma_m )\subset \bbZ^2$ which satisfy $\gamma_i\cdot\sfn \geq 0$.

\section{Irreducible decomposition of interfaces}
\label{sec:Irreducible}
In this Section we describe a decomposition of decorated contours in
terms of strings of irreducible animals \eqref{eq:irreducible-A}.
At low temperatures this decomposition is inherited from the corresponding
irreducible decomposition of naked open contours with weights
${\rm e}^{-\beta\abs{\gamma}}$. The latter is based on
the mass-gap estimate
\eqref{eq:breakpoints-SAW}. In view of \eqref{eq:lengthsaw}
 and of \eqref{eq:ratio-SAW}, the mass-gap
property persists for decorated contours as soon as $\beta$ is sufficiently
large. This is \eqref{eq:exp-tails}, and the decay exponent (mass-gap) $\nu_{\sfg}$
which appears therein is fixed throughout the paper.
Properties of the irreducible decomposition are listed in
Theorem~\ref{thm:OZ}. \eqref{eq:OZ-weights} defines a class of probability
distributions on the alphabet of irreducible animals, which sets up the
stage for the renewal analysis in the sequel.

Ratios of partition functions
of pinned and free ensembles are controlled by
\eqref{eq:q+-crude}. By the mass gap estimate \eqref{eq:exp-tails}
the pinned two-point  function $G_\beta^{+ ,\sfn}  (\sfx )$
is bounded above by the expression in \eqref{eq:Gbeta-decomp}, and
consequently a proof of upper bound in Theorem~\ref{thm:nopinning} is
reduced to a verification of \eqref{eq:Target1}.

\subsection{ Crude comparison with
ensembles of SW paths.
}
%Recall that open contours are lattice paths subject to the SW-splitting rule.
Paths $\gamma = \lb\gamma_0 , \dots , \gamma_n\rb$ are open contours with edges
$e_l = (\gamma_{l-1}, \gamma_l )$ which obey rules as specified in the beginning of
Section~\ref{sec:themain}.
With each such path we may associate the ``cluster-free'' weight ${\rm e}^{-\beta\abs{\gamma}} $.
For a subset $\calP$ of paths, the restricted  two point functions for the SW-ensemble (SW recalling the south-west splitting rule) are defined via:
\[
 G_\beta^{{\rm SW }} (\sfx \big|~\calP ) =
\sumtwo{\gamma : 0\mapsto\sfx }{\gamma\in\calP} {\rm e}^{-\beta\abs{\gamma}} .
\]
The
two-point   functions  we are working with
is a
perturbation of the latter.
Although Theorem~\ref{thm:nopinning} eventually relies on a more delicate
analysis, heavy duty
estimates on exponential scales lead to a convenient geometric setup. Let us formulate basic
geometric properties of
free SW-paths:
\paragraph{\bf Forward cone $\calY$.} For the rest of this section  fix
$\kappa = \arctan (1/2)$ and define a
positive  cone
\be
\label{eq:Ycone}
 \calY = \lbr \sfx~:~ -   \kappa \leq {\rm arg}(\sfx ) \leq \frac{\pi}{2} +  \kappa\rbr .
\ee
The cone  $\calY$ is strictly  contained in the half-plane $\lbr\sfx = (x,y)~:~ x+ y\geq 0\rbr$
and it  contains the positive quadrant   $\calQ_+ \df \lbr \sfx = (x, y) : x,y\geq 0\rbr$
in its
interior (see Figure~\ref{fig:Decomp} below).
\paragraph{\bf Definition (break points of paths).}
A path $\gamma = \lb\gamma_0, \gamma_1, \dots , \gamma_n\rb$ is
said to have a  break point at $\sfu = \gamma_\ell\in\gamma ;\ 0 <\ell <n,$ if
\[
%\gamma\subset  (\sfu-\calY_2 )\cup(\sfu+\calY_2 )
\lbr\gamma_0 , \dots, \gamma_{\ell-1}\rbr \subset \gamma_\ell - \calY
%\gamma_m\cdot\sfe_1 \leq \gamma_\ell\cdot\sfe_1\ \forall~m <\ell
\quad{\rm and}\quad
  \lbr\gamma_{\ell +1}, \cdots , \gamma_n\rbr \subset\gamma_{\ell} +\calY .
%\gamma_m\cdot\sfe_1 > \gamma_\ell\cdot\sfe_1\  \forall~m >\ell .
\]
{If a path has no break points, it is called irreducible.}
\begin{lemma}
 \label{lem:saw}
There exist $\beta_0 <\infty$,  $\nu_0 >0$ and $r_0 <\infty $ such that
\be
\label{eq:lengthsaw}
G_\beta^{\rm SW} \lb\, \sfx\,  \big| \abs{\gamma}   \geq  r |{\sfx}|_1\rb
\leq c_1\, {\rm e}^{-2\nu_0\beta r\abs{\sfx}_1}
G_\beta^{\rm SW} \lb \sfx\rb
\ee
uniformly in $\beta\geq \beta_0$, $\sfx$ and $r\geq r_0$.
Furthermore,
let $\calP_{n}$ be the set of paths with at least $n$  break points. Then, there exist
$\delta_0, \nu_0 >0$ such that
\be
\label{eq:breakpoints-SAW}
G_\beta^{\rm SW} \lb\, \sfx\,  \big|
\calP_{\delta_0\abs{\sfx}_1}^{\sfc}\rb \leq c_1\,
{\rm e}^{-2\beta\nu_0 \abs{\sfx}_1} G_\beta^{\rm SW} \lb \sfx\rb.
\ee
uniformly in $\beta\geq \beta_0$ and  $\sfx\in\calQ_+$.
\end{lemma}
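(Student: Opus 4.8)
The plan is to deduce both displays of Lemma~\ref{lem:saw} from two crude facts about south-west paths. First, the number of admissible paths of a given length $m$ issued from a fixed vertex is at most $4^m$ (at most three choices of the next edge at each step, four at the first), and the south-west rules only decrease this. Second, $G_\beta^{\rm SW}(\sfx)\ge {\rm e}^{-\beta\abs{\sfx}_1}$, obtained by keeping a single geodesic from $0$ to $\sfx$ --- an ``L''-shaped path in general, or, when $\sfx\in\calQ_+$, any monotone staircase, which trivially obeys the south-west rules since it visits each vertex at most once. For $\sfx\in\calQ_+$ I will also use $\abs{\gamma}=\abs{\sfx}_1+2\sfw(\gamma)$, where $\sfw(\gamma)$ is the number of steps of $\gamma$ in the $-e_1$ or $-e_2$ direction. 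Granting these, \eqref{eq:lengthsaw} is immediate: $G_\beta^{\rm SW}(\sfx\,\big|\,\abs{\gamma}\ge r\abs{\sfx}_1)\le\sum_{m\ge r\abs{\sfx}_1}4^m{\rm e}^{-\beta m}\le 2\,{\rm e}^{-r\abs{\sfx}_1(\beta-\log4)}$ once $\beta$ is large, and dividing by ${\rm e}^{-\beta\abs{\sfx}_1}\le G_\beta^{\rm SW}(\sfx)$ while taking $\beta_0$ large, $r_0$ slightly above $2$, and $\nu_0$ small gives the bound uniformly in $\sfx$.

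For \eqref{eq:breakpoints-SAW} the point is the purely geometric claim that every south-west path $\gamma:0\mapsto\sfx$ with $\sfx\in\calQ_+$ has at least $\abs{\sfx}_1-C\sfw(\gamma)$ break points, with $C$ a universal constant. To prove it I would write $\calY=\{\sfu:\phi_1(\sfu)\ge0,\ \phi_2(\sfu)\ge0\}$, where $\phi_1(\sfu)=\sfu\cdot(1,2)$ and $\phi_2(\sfu)=\sfu\cdot(2,1)$ are the inward normals of $\partial\calY$; then, unwinding the definition, $\gamma_\ell$ is a break point exactly when $\ell$ is a two-sided ladder point of \emph{both} one-dimensional trajectories $\ell\mapsto\phi_1(\gamma_\ell)$ and $\ell\mapsto\phi_2(\gamma_\ell)$ (each attaining at $\ell$ its running maximum over the past and its running minimum over the future). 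Each trajectory is a walk of length $\abs{\gamma}=\abs{\sfx}_1+2\sfw(\gamma)$ with steps in $\{\pm1,\pm2\}$ whose downward steps are exactly the backward steps of $\gamma$; a standard bookkeeping of half-integer level crossings --- charging each downward crossing to a backward step of $\gamma$, and accounting for the mild fact that each $\phi_i$ weights one lattice direction by $2$ --- shows each trajectory has at least $\abs{\sfx}_1-O(\sfw(\gamma))$ two-sided ladder points. Since both ladder sets sit inside $\{0,1,\dots,\abs{\gamma}\}$ and $\abs{\gamma}=\abs{\sfx}_1+2\sfw(\gamma)$, the elementary inclusion--exclusion bound leaves at least $\abs{\sfx}_1-O(\sfw(\gamma))$ common ladder points, i.e. break points.

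With the claim in hand, a path with fewer than $\delta_0\abs{\sfx}_1$ break points must have $\sfw(\gamma)\ge c\abs{\sfx}_1$ for some $c=c(\delta_0,C)>0$ (for $\abs{\sfx}_1$ large; the bounded range is irrelevant for the sequel). Then, using the $4^m$ count again, $G_\beta^{\rm SW}(\sfx\,\big|\,\calP_{\delta_0\abs{\sfx}_1}^{\sfc})\le\sum_{\sfw\ge c\abs{\sfx}_1}4^{\,\abs{\sfx}_1+2\sfw}{\rm e}^{-\beta(\abs{\sfx}_1+2\sfw)}$, a geometric series dominated by $C'(4{\rm e}^{-\beta})^{\abs{\sfx}_1}(16{\rm e}^{-2\beta})^{c\abs{\sfx}_1}$; dividing by ${\rm e}^{-\beta\abs{\sfx}_1}\le G_\beta^{\rm SW}(\sfx)$ gives a ratio $\le C'{\rm e}^{\abs{\sfx}_1(\log4+c\log16-2c\beta)}\le{\rm e}^{-2\beta\nu_0\abs{\sfx}_1}$ for $\beta\ge\beta_0$ large, after shrinking $\nu_0$ (and taking the common, smaller $\nu_0$ in the two displays if one wishes).

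The step I expect to be the main obstacle is the geometric claim on the number of break points: being a break point is a \emph{global} constraint on all of $\gamma$, so one has to argue with some care that a single backward step spoils only boundedly many ladder points of each coordinate trajectory. Two additional, but minor, complications are that the south-west splitting rule permits $\gamma$ to pass twice through the same vertex, and that the trajectories $\phi_i(\gamma_\ell)$ may skip integer values (steps of size $2$); both are handled by reasoning with half-integer level crossings rather than with vertex visits. A less self-contained route would be to import the analogous cone-point density estimates from the Ornstein--Zernike literature \cite{DKS,CIV03,CIL}, at the cost of the extra bookkeeping needed to accommodate the south-west convention.
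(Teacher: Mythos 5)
Your proposal is correct in substance, but it takes a genuinely different route from the paper. The paper dismisses \eqref{eq:lengthsaw} as straightforward and obtains \eqref{eq:breakpoints-SAW} by invoking a modification of the renormalization (coarse-graining) argument behind Theorem~3.1 of \cite{IV08}, i.e.\ it imports Ornstein--Zernike cone-point machinery. You instead stay entirely at the low-temperature energy--entropy level: the crude count of at most $4^m$ SW paths of length $m$ against the geodesic lower bound $G_\beta^{\rm SW}(\sfx)\ge{\rm e}^{-\beta\abs{\sfx}_1}$ gives \eqref{eq:lengthsaw} at once, and \eqref{eq:breakpoints-SAW} is reduced to the deterministic claim that a SW path $\gamma:0\mapsto\sfx$ with $\sfx\in\calQ_+$ has at least $\abs{\sfx}_1-C\,\sfw(\gamma)$ break points, where $\sfw(\gamma)$ is the number of $-\sfe_1,-\sfe_2$ steps; few break points then force $\sfw(\gamma)\ge c\abs{\sfx}_1$, and the $4^{\abs{\sfx}_1+2\sfw}$ entropy loses to the ${\rm e}^{-2\beta\sfw}$ energy cost. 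That claim is true, and your identification of break points as the \emph{common} two-sided weak ladder epochs of the scalar trajectories $\phi_1(\gamma_\ell)=\gamma_\ell\cdot(1,2)$ and $\phi_2(\gamma_\ell)=\gamma_\ell\cdot(2,1)$ is exactly right. What your route buys is a short, self-contained argument valid for $\beta\ge\beta_0$, which is the only regime in which the lemma is asserted; what the paper's route buys is robustness (the renormalization argument needs only a mass gap, not explicit low-temperature combinatorics, and extends to decorated/longer-range ensembles). Your reading of the uniformity in $\sfx$ is also the intended one: for, say, $\sfx=\sfe_1$ the single-edge path has no break points at all, so \eqref{eq:breakpoints-SAW} is only meaningful once $\abs{\sfx}_1$ is large (or with $c_1$ depending on $\beta$), which is harmless for the way the lemma is used.

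One remark on the step you flag as the main obstacle. Bounding by $2\sfw(\gamma)$ the number of downcrossing incidences of half-integer levels controls the number of \emph{levels} that are crossed more than once, but not by itself the number of spoiled ladder \emph{times}: a level crossed exactly once upward need not produce a two-sided ladder epoch at the crossing step, and a single downcrossed level can witness many non-ladder times. The clean fix uses precisely the feature you mention in passing, namely that every step of $\phi_i(\gamma_\cdot)$ has modulus $1$ or $2$: an excursion of $\phi_i$ strictly below its running maximum containing $k$ down-steps has depth at most $2k$, hence at most $2k+2$ up-steps and length $O(k)$, so summing over excursions at most $O(\sfw(\gamma))$ indices fail to be weak ascending ladder epochs; the sign-flipped, time-reversed version gives the same bound for the ``future-minimum'' condition, whose failures are excursions driven again by the $\sfw(\gamma)$ down-steps of the original trajectory. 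Taking the union over $i=1,2$ and using $\abs{\gamma}=\abs{\sfx}_1+2\sfw(\gamma)$ yields the claimed $\abs{\sfx}_1-C\,\sfw(\gamma)$ break points, up to the bounded ambiguity of counting epochs by times versus by (possibly twice-visited) vertices, which only changes $C$ and $\delta_0$. With that insertion your argument is complete.
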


The inequality \eqref{eq:lengthsaw} is straightforward. The inequality
\eqref{eq:breakpoints-SAW} follows by an easy modification of renormalization
arguments leading to Theorem~3.1 in \cite{IV08}.
\newline
By \eqref{eq:path-A-weights}
\be
\label{eq:ratio-SAW}
 \left|\log\frac{q  (\gamma )}{q^{\rm SW} (\gamma )}\right|
 \leq c_2\, {\rm e}^{-\chi\beta}\abs{\gamma} .
\ee
Therefore, \eqref{eq:lengthsaw} and \eqref{eq:breakpoints-SAW} imply:
\be
\label{eq:breakpoints}
G_\beta  \lb\, \sfx\,  \big| \abs{\gamma}   \geq  r\abs{\sfx}_1\rb
\leq c_3\, {\rm e}^{-\nu_0\beta r\abs{\sfx}_1}
G_\beta  \lb \sfx\rb\quad
{\rm and}\quad
G_\beta \lb \sfx\, \big|   \calP_{\delta_0\abs{\sfx}_1}^{\sfc}\rb \leq c_3\,
{\rm e}^{-\beta\nu_0 \abs{\sfx}_1} G_\beta  \lb \sfx\rb
\ee
uniformly in $\beta$ sufficiently large, $\sfx\in\calQ_+$
and  $r\geq r_0$.
\paragraph{\bf Crude bounds on $G^{+ \sfn}_\beta (\sfx )$.}
With \eqref{eq:breakpoints}  at hand, it is an easy consequence of the   Ornstein-Zernike   theory developed in
\cite{IV08} (see the local limit formula (3.10) there)
that for any
large $\beta$ fixed
and
for any $\delta >0$,  one has $G_\beta ({\sfx} )
{
\leq
}
 G_\beta (\sfx |
\calP_{+ , \sfn} ){\rm e}^{{4} \delta \beta {|\sfx|_1}}$, uniformly
{
$\sfn$ and in {$\sfx\in \calB_{+ , \sfn}$ with
}
$\abs{\sfx}_1$ large.
}

{
Indeed, it is enough to consider $\arg (\sfn ) \in [\frac{\pi}{2} , \frac{3\pi}{ 4}]$. Let
$\sfv \in \calH_{+ , \sfn}$ be a unit vector such that $\sfv \not\in -\calY\cup\calY$. Define
$\sfO_\delta = \lfloor \delta\abs{\sfx}_1\rfloor\sfv$ and
$\sfx_\delta = \sfx+ \lfloor \delta\abs{x}_1 \rfloor\sfv$.
For $\abs{\sfx}_1$ large,  points $\sfO_\delta$ and $\sfx_\delta$ sit deep inside $\calH_{+, \sfn}$.
Define $D (\sfO_\delta , \sfx_\delta ) = \lb \sfO_\delta +\calY\rb\cap \lb \sfx_\delta -\calY\rb$, and
consider the restriction of
$G_\beta (\sfx |
\calP_{+ , \sfn} )$ to paths $\gamma$ which are concatenations
$\gamma = \gamma_1\circ\gamma_2\circ\gamma_3$, where $\gamma_1 :0\to\sfO_\delta$,
$\gamma_2 : \sfO_\delta\to \sfx_\delta$, $\gamma_3 : \sfx_\delta\to\sfx$, and, in addition,
$\gamma_i\cap D (\sfO_\delta , \sfx_\delta ) =\emptyset$ for
 $i=1, 3$, whereas $\gamma_2 \subset D (\sfO_\delta , \sfx_\delta )$. The contribution of
 $\gamma_1$ and $\gamma_3$ is bounded below by ${\rm e}^{-3\beta\abs{\sfx}_1}$. On the other hand, the main
 contribution from $\gamma_2$ come from those paths which obey the Brownian scaling and hence
 stay inside $\calH_{+ , \sfn }$.
} 

Therefore, quantities which are exponentially negligible with respect
to $G_\beta (\sfx )$ are exponentially negligible with respect to
$G_\beta (\sfx |
\calP_{+ , \sfn} )$ as well.

By \eqref{eq:path-A-weights}  we have for every $\calP$
 \be
\label{eq:q+-crude}
\begin{split}
&{
\sumtwo{\gamma :0 \mapsto\sfx}{\gamma\in \calP_{+, \sfn}\cap
\calP}
q (\gamma ){\rm exp}\lbr -  \sum_{\sfy\in\gamma}{\rm e}^{-\chi\beta (
%1\vee
%{\rm d}_\infty(\sfy ,
%\calH_{+ ,\sfn}^{\sfc})
\dd_\sfn (\sfy )
+1)}\rbr
}\\
&\qquad \leq \,
G^{+ , \sfn}_\beta (\sfx \big|~\calP )
\leq \sumtwo{\gamma :0 \mapsto\sfx}{\gamma\in \calP_{+, \sfn}\cap
\calP}
q (\gamma ){\rm exp}\lbr \sum_{\sfy\in\gamma}{\rm e}^{-\chi\beta (
%1\vee
%{\rm d}_\infty(\sfy ,
%\calH_{+ ,\sfn}^{\sfc})
\dd_\sfn (\sfy )
+1)}\rbr .
\end{split}
\ee

By \eqref{eq:breakpoints} 
we may restrict attention to
paths
$\gamma\in \calP_{\delta_0\abs{\sfx}}$.

A look at \eqref{eq:q+-crude} reveals that the lower bound in
\eqref{eq:nopinning} of Theorem~\ref{thm:nopinning} is the easier one.
Indeed, one should merely argue that for typical interfaces
$\gamma \subset \calP_{+, \sfn}$
 with
(full space) $q (\gamma )$-weights
 the quantity
$\sum_{\sfy\in\gamma}{\rm e}^{-\chi\beta (
%1\vee
%{\rm d}_\infty(\sfy ,
%\calH_{+ ,\sfn}^{\sfc})
\dd_\sfn (\sfy )
+1)}$
 is uniformly bounded from above. The latter property will be a
 consequence of the fact that
such typical interfaces
are sufficiently repelled from $\calB_{+, \sfn}$.
On the contrary, to prove the upper bound one should explore in
depth the competition
between pinning and
repulsion. Namely, the gain $\sum_{\sfy\in\gamma}{\rm e}^{-\chi\beta (
%1\vee
%{\rm d}_\infty(\sfy ,
%\calH_{+ ,\sfn}^{\sfc})
\dd_\sfn (\sfy )
+1)}$ should be measured
%on all possible scales
against the entropic price of
bringing interfaces $\gamma$  close to the wall.

\paragraph{\bf Irreducible decomposition of paths.}
 Paths $\gamma\in \calP_{\delta_0\abs{\sfx}}$  admit a natural irreducible decomposition
\be
\label{eq:irreducible}
\gamma = \gm{\sfl}\circ \gm{1}\circ \dots\circ \gm{n}\circ\gm{\sfr} .
\ee
Above $\gm{\sfl}$ is a left-irreducible path: $\gm{\sfl} \in \sfP_{\sfl}$ and  $\gm{\sfr}$
is a right-irreducible path: $\gm{\sfr} \in \sfP_{\sfr}$. The paths $\gm{1}, \dots ,\gm{n} \in
\sfP = \sfP_{\sfl}\cap\sfP_{\sfr}$ are irreducible.

The alphabets  $\sfP_{\sfl}$ and $\sfP_{\sfr}$ could be described as follows:
$\gamma = \lb \gamma_0 , \dots , \gamma_m\rb \in \sfP_{\sfl}$ if
%\[
% \text{
$\gamma$ does not contain break points and $\gamma\subseteq \gamma_m -
\calY$. %} .
%\]
Similarly,
$\gamma = \lb \gamma_0 , \dots , \gamma_m\rb \in \sfP_{\sfr}$ if
$\gamma$ does not contain break points and $\gamma\subseteq \gamma_0 + \calY$.
In the sequel we shall
 use $\ugamma$ for strings of letters from $\sfP$.
The notation $\sfP (\sfx ,\sfy)$ is reserved for irreducible paths with
end points at $\sfx$ and $\sfy$.
Note that
% $\sfP (\sfx ,\sfy)$ can be  non-empty only if $\sfx\prec\sfy$, and in the latter case
any path $\gamma\in \sfP (\sfx ,\sfy)$ automatically lies inside the diamond shape
(see Figure~\ref{fig:Decomp}).

{In the sequel we shall
 use $\ugamma$ for strings of letters from $\sfP$.
 The strings of $\ell$ letters will be denoted by $\frP_\ell$, $\ell\leq \infty$.} In this way,
\eqref{eq:irreducible} reads as $\gamma =
\gm{\sfl}\circ\ugamma\circ\gm{\sfr}$,
$\ugamma\in \frP_n$.
In general, with each path  $\gamma = (\gamma_0 , \dots , \gamma_k ) $ we associate:
 $\abs{\gamma} = k$ (the length of $\gamma$) and $\sfX (\gamma ) =
\gamma_k - \gamma_0$ (the displacement).
For $\ugamma \in\frP_n$ we define:
\[
 \abs{\ugamma} = \sum_1^n \abs{\gm{i}} %,\quad
%\tau (\ugamma ) = \sum_1^n \tau (\gamma_i )
\quad {\rm and}
\quad
\sfX (\ugamma ) = \sum_1^n \sfX (\gm{i}  ) .
\]
\paragraph{\bf Irreducible animals.}
Let us say that an animal $\Gamma = [\gamma , \ucalC]$
has a break point at
$\sfu\in\gamma$ if $\sfu$ is a break point of $\gamma$, and if
\[
 \cup_i \calC_i\subset \lb \sfu - \calY\rb\cup\lb \sfu+\calY\rb .
\]

The collections $\sfA_{\sfl}$, $\sfA_{\sfr}$ and $\sfA = \sfA_{\sfl}\cap\sfA_{\sfr}$
of respectively left irreducible, right irreducible and irreducible animals are defined as
in the case of paths. For instance, $\Gamma = [\gamma , \ucalC]\in\sfA (\sfx ,\sfy )$ if
$\Gamma$ does not contain
break points and
% \label{eq:Gamma-D}
$\Gamma\subset D (\sfx , \sfy ) ,$
where $\sfx , \sfy$ are  the end points of $\gamma = (\gamma_0 ,\dots, \gamma_n )$;
  $\sfx = \gamma_0$, $\sfy = \gamma_n$, and the diamond shape
$D (\sfx , \sfy )$ was defined in \eqref{eq:D-shape} (see Figure~\ref{fig:Decomp}). Note that
$[\gamma , \uempty ]\in \sfA (\sfx , \sfy )$ iff $\gamma\in\sfP (\sfx ,\sfy )$. More generally,
$[\gamma , \ucalC] \in \sfA$ implies that $\gamma$ is a word  from
$\frP_\ell$ for some $\ell\geq 1$.
\be
\label{eq:D-shape}
 D (\sfx , \sfy ) \df \lb \sfx +\calY\rb\cap \lb \sfy -\calY\rb .
\ee
\begin{figure}[t]
\begin{center}
\includegraphics[width=0.9\textwidth]{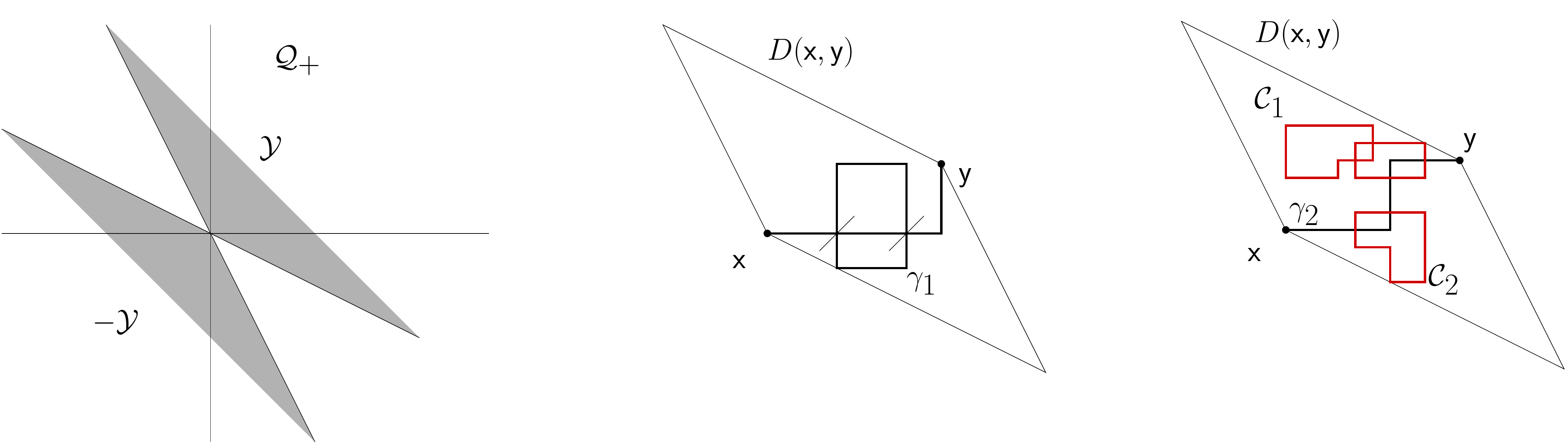}
\end{center}
\caption{Positive quadrant $\calQ_+$ and  cones $\calY$, $-\calY$. Irreducible
path $\gamma_1 \subset D(\sfx , \sfy )$ and irreducible animal $[\gamma_2 , \calC_1, \calC_2]$.
}
\label{fig:Decomp}
\end{figure}
In its turn the notation $\frA_\ell$ stands for words of
$\ell$ irreducible animals, and, in the latter case,  we shall write $\uGamma\in \frA_\ell$.
By construction, such $\uGamma$ is represented as $\uGamma = [\ugamma , \ucalC]$, where
$\ugamma$ is a concatenation of letters  from $\sfP$.
{The notation $\frA_\ell (\sfx, \sfy )$ stands for those elements of $\frA_\ell$ which
have the left end-point at $\sfx$ and the right end point at $\sfy$. Finally,
$\frA (\sfx , \sfy )\df  \cup_\ell\frA_\ell (\sfx, \sfy )$.}
In the case of animals the quantities  $\abs{\uGamma}$, $\sfX (\uGamma )$ are defined through
the corresponding path components. That is:
$\abs{[\gamma , \ucalC]}\df \abs{\gamma}$ and $\sfX\lb [\gamma , \ucalC ]\rb \df \sfX (\gamma )$.

As  discussed above we may restrict attention to those animals which contain
at least $\delta^\prime\abs{x}$ break points (for some $\delta^\prime >0$). This leads
to the irreducible decomposition
\be
\label{eq:irreducible-A}
[\gamma , \ucalC ] = \Gm{\sfl}\circ\Gm{1}\circ\dots\circ\Gm{n}\circ\Gm{\sfr} ,
\ee
with $\Gm{\sfl}\in\sfA_\sfl$, $\Gm{\sfr}\in\sfA_\sfr$ and $\Gm{1}, \dots , \Gm{n}\in \sfA$.
\paragraph{\bf Input from Ornstein-Zernike (OZ) theory.}
The relevant input from the OZ theory
{
(see for instance Subsections~3.3  and 3.4  of \cite{IV08})}
could be summarized as follows:
\begin{theorem}
 \label{thm:OZ}
 For all $\beta$ large enough
the surface tension $\tau_\beta$ in \eqref{eq:surface-tension} is well defined and it
is a support
function
of a convex set $\Kb$ with non-empty interior and locally analytic boundary $\pKb$, which
has a uniformly positive curvature. In particular $\tau_\beta$ is differentiable at any
$\sfx\neq 0$ and $\sfh = \sfh_\sfx\df \nabla\tau_\beta (\sfx )\in\pKb$.
The Wulff shape $\Kb$ inherits the full set of $\bbZ^2$-lattice symmetries from the
surface tension $\tau_\beta(\sfx)$.
In particular $\sfh_\sfx\in \calQ_+$ whenever $\sfx\in \calQ_+$.
In geometric
terms $\sfh_\sfx$ can be characterized
in the following way: $\sfx$ is direction of the outward normal
 to
$\pKb$ at $\sfh_\sfx$.
In view of smoothness and
strict convexity of $\pKb$ this is an unambiguous characterization.
\newline
For any $\sfx\in\calQ_+\setminus 0$ the collection of weights
\be
\label{eq:OZ-weights}
\bbP_\beta^{\sfh_\sfx }\lb \Gamma \rb \df {\rm e}^{\sfh_\sfx\cdot \sfX (\Gamma )}q (\Gamma )
\ee
is a probability distribution on the set $\sfA$ of irreducible animals.
The expectation $\sfv^* (\beta ,\sfx )$
of $\sfX (\Gamma )$ under
$\bbP_\beta^{\sfh_\sfx }$ is collinear to $\sfx$: there exists
$\alpha = \alpha (\beta , \sfx ) >0 $ such that $\sfv^* (\beta ,\sfx ) = \alpha\sfx$.
Note that, since $\tau_\beta $ is homogeneous of order one, $\sfv^*
(\beta ,\sfx )$
depends only on the direction of $\sfx$.

Furthermore, there exists a (mass-gap) constant $\nu_{\sfg}  >0$, such that
\be
\label{eq:exp-tails}
\sum_{\Gamma\in \sfA_\sfl} \bbP_\beta^{\sfh_\sfx} (\Gamma )\1_{\lbr \abs{\Gamma}\geq k\rbr} +
\sum_{\Gamma\in \sfA_\sfr} \bbP_\beta^{\sfh_\sfx} (\Gamma )\1_{\lbr \abs{\Gamma}\geq k\rbr}
\leq c\, {\rm e}^{-\nu_{\sfg}\beta k} ,
\ee
uniformly in $\beta$ large,  $\sfx\in\calQ_+$ and $k>1$.
\end{theorem}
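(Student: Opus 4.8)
Theorem~\ref{thm:OZ} packages the standard Ornstein--Zernike conclusions, and I would obtain it by lifting the renewal structure of the irreducible decomposition \eqref{eq:irreducible-A} to generating functions, as in Subsections~3.3--3.4 of \cite{IV08}, and then reading the geometry of $\pKb$ off the analyticity of the renewal kernel. For $\sfh\in\bbR^2$ put
\[
\mathbb{G}(\sfh)=\sum_{\sfx}{\rm e}^{\sfh\cdot\sfx}G_\beta(\sfx),\qquad
\mathbb{F}(\sfh)=\sum_{\Gamma\in\sfA}{\rm e}^{\sfh\cdot\sfX(\Gamma)}q(\Gamma),
\]
and define $\mathbb{F}_\sfl,\mathbb{F}_\sfr$ the same way with $\sfA$ replaced by $\sfA_\sfl,\sfA_\sfr$. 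Summing \eqref{eq:irreducible-A} over the number of irreducible pieces yields the renewal identity $\mathbb{G}(\sfh)=\mathbb{F}_\sfl(\sfh)\lb 1-\mathbb{F}(\sfh)\rb^{-1}\mathbb{F}_\sfr(\sfh)$ on $\lbr\mathbb{F}<1\rbr$. Existence and convexity of $\tau_\beta$ come first and cheaply: concatenating an interface $0\mapsto\sfx$ with a translate of $0\mapsto\sfy$ gives an interface $0\mapsto\sfx+\sfy$ of $q$-weight at least a constant times the product, so $-\log G_\beta$ is subadditive up to an additive constant, Fekete's lemma gives $\tau_\beta(\sfn_\sfx)=-\lim_{t}\frac1t\log G_\beta(t\sfx)$, which is $>0$ for $\beta$ large (since $G_\beta(\sfx)\le {\rm e}^{-(\beta-c)\abs{\sfx}_1}$), and the degree-one extension $\tau_\beta(\sfx)=\abs{\sfx}\tau_\beta(\sfn_\sfx)$ is convex as a limit of convex functions. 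One then \emph{defines} $\Kb$ to be the convex body with support function $\tau_\beta$ and sets $\sfh_\sfx\df\nabla\tau_\beta(\sfx)\in\pKb$; the characterization of $\sfh_\sfx$ as the boundary point with outward normal $\sfx$, and the lattice symmetries of $\Kb$ (in particular $\sfh_\sfx\in\calQ_+$ for $\sfx\in\calQ_+$), are immediate from the symmetries of $\tau_\beta$ assumed in (P3) together with the fact that $\calQ_+$ is bounded by reflection axes.

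\textbf{Mass gap and the probabilistic normalisation.} The substantive input is that $\mathbb{F}$ converges on a strictly larger region than $\mathbb{G}$, with a gap \emph{uniform in $\beta$ large}. This is inherited from the cluster-free SW ensemble: the break-point estimate \eqref{eq:breakpoints-SAW} says that, up to a cost ${\rm e}^{-2\nu_0\beta\abs{\sfx}_1}$, a SW path $0\mapsto\sfx$ has at least $\delta_0\abs{\sfx}_1$ break points; by a renormalisation identical to the proof of Theorem~3.1 of \cite{IV08} this upgrades to exponential control of the \emph{length of a single irreducible piece}, i.e.\ $\sum_{\Gamma}{\rm e}^{\sfh\cdot\sfX(\Gamma)+\nu\beta\abs{\Gamma}}{\rm e}^{-\beta\abs{\Gamma}}<\infty$ over left-, right- and two-sided irreducible SW animals, for some fixed $\nu>0$ and all $\sfh$ in a $\beta$-independent neighbourhood of the SW boundary. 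Since by \eqref{eq:ratio-SAW} the decorated weights differ from the SW ones only by the factor ${\rm e}^{\pm c_2{\rm e}^{-\chi\beta}\abs{\gamma}}$, the same enhanced convergence carries over to $\sfA_\sfl,\sfA_\sfr,\sfA$ with a rate $\nu_{\sfg}>0$ that may be fixed independently of $\beta$ once $\beta$ is large --- this is exactly \eqref{eq:exp-tails}. Consequently $\mathbb{F},\mathbb{F}_\sfl,\mathbb{F}_\sfr$ are analytic on a neighbourhood $\calU\supset\pKb$, whereas $\mathbb{G}(\sfh)$ is finite strictly inside $\Kb$ but $\mathbb{G}(\sfh)\to\infty$ as $\sfh$ approaches $\sfh_\sfx$ radially (by \eqref{eq:surface-tension}, using $\sfh_\sfx\cdot\sfx=\tau_\beta(\sfx)$, the one-dimensional section $\sum_t {\rm e}^{t\,\sfh_\sfx\cdot\sfx}G_\beta(t\sfx)$ diverges like $\sum_t t^{-1/2}$). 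The standard Ornstein--Zernike argument comparing this with $\mathbb{G}=\mathbb{F}_\sfl(1-\mathbb{F})^{-1}\mathbb{F}_\sfr$ (see \cite{IV08}) forces $\mathbb{F}(\sfh_\sfx)=1$, so the weights $\bbP_\beta^{\sfh_\sfx}$ in \eqref{eq:OZ-weights} form a genuine probability measure on $\sfA$; if one prefers not to take \eqref{eq:surface-tension} as given, the same renewal representation delivers it via a local CLT for the effective walk.

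\textbf{Analyticity, curvature, mean direction.} With $\mathbb{F}$ analytic on $\calU$ and
\[
\nabla\mathbb{F}(\sfh_\sfx)=\sum_{\Gamma\in\sfA}\sfX(\Gamma)\,{\rm e}^{\sfh_\sfx\cdot\sfX(\Gamma)}q(\Gamma)=\bbE_{\bbP_\beta^{\sfh_\sfx}}\lb\sfX(\Gamma)\rb=\sfv^*(\beta,\sfx)\neq 0
\]
(the last step because $\sfX(\Gamma)\in\calY$ always, is nonzero with positive probability, and $\calY$ is a salient cone), the implicit function theorem makes $\lbr\mathbb{F}=1\rbr$ a locally analytic curve near each $\sfh_\sfx$; since $\mathbb{F}$ is convex (a sum of exponentials) and $\pKb\subseteq\lbr\mathbb{F}=1\rbr$, a smooth $1$-submanifold of $\bbR^2$ containing another through the same point coincides with it nearby, so $\pKb$ is locally analytic with outward normal $\sfv^*(\beta,\sfx)$ at $\sfh_\sfx$. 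By the support-function characterization this normal is collinear with $\sfx$, whence $\sfv^*(\beta,\sfx)=\alpha\sfx$ with $\alpha>0$ (as $\sfv^*\in\calY$ is forward-pointing), and $\alpha$ depends only on $\sfn_\sfx$ by homogeneity of $\tau_\beta$. Finally, the curvature of $\pKb$ at $\sfh_\sfx$ is controlled by $\mathrm{Hess}\,\mathbb{F}(\sfh_\sfx)=\bbE_{\bbP_\beta^{\sfh_\sfx}}\bigl[\sfX(\Gamma)\,\sfX(\Gamma)^{\mathsf T}\bigr]$, which is strictly positive-definite because the elementary one-step irreducible animals (the four basic Ising-polymer steps) have displacements spanning $\bbR^2$; continuity in $\sfh$ and compactness of the relevant arc of $\pKb$ then yield a uniformly positive lower bound on the curvature, and likewise make all constants uniform over $\sfx\in\calQ_+$.

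\textbf{Main obstacle.} The one genuinely delicate point is the $\beta$-\emph{uniformity} of the mass gap $\nu_{\sfg}$ in \eqref{eq:exp-tails}: although the decorations perturb the weights only by ${\rm e}^{O({\rm e}^{-\chi\beta})\abs{\gamma}}$, one must verify that this does not erode the spectral gap of the SW renewal kernel, i.e.\ that the radius of convergence of $\mathbb{F}$ stays bounded away from that of $\mathbb{G}$ by an amount not tending to $0$ as $\beta\to\infty$. This is precisely where \eqref{eq:ratio-SAW} (hence $\chi>1$) is used quantitatively, and it is why the theorem is asserted only for $\beta$ large. By contrast, the positive curvature of $\pKb$ is needed here only qualitatively, for each fixed large $\beta$; the \emph{quantitative} rate at which $\mathrm{Cov}_{\bbP_\beta^{\sfh_\sfx}}\bigl(\sfX(\Gamma)\bigr)$ degenerates as $\beta\to\infty$ --- the real heart of the pinning-versus-repulsion competition --- is not part of Theorem~\ref{thm:OZ} and is left to the refined estimates of Proposition~\ref{prop:LB-P}.
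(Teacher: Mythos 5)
The paper contains no proof of Theorem~\ref{thm:OZ}: it is stated verbatim as imported input from the Ornstein--Zernike theory of \cite{IV08} (see also \cite{CIV03,CIL}), the only locally supplied ingredients being Lemma~\ref{lem:saw} and the comparison \eqref{eq:ratio-SAW}. Your sketch is therefore not an alternative to an argument in the text but a reconstruction of the cited machinery, and it follows what is essentially the canonical route there: subadditivity for existence and convexity of $\tau_\beta$, the renewal identity $\bbG=\bbF_\sfl(1-\bbF)^{-1}\bbF_\sfr$, divergence of $\bbG$ along the ray to $\sfh_\sfx$ forcing $\bbF(\sfh_\sfx)=1$ (hence the normalisation in \eqref{eq:OZ-weights}), the implicit function theorem for local analyticity of $\pKb$, and non-degeneracy of the Hessian for positive curvature; the identification of $\nabla\bbF(\sfh_\sfx)$ with $\sfv^*(\beta,\sfx)$ and with the outward normal, giving collinearity with $\sfx$, is also the standard step. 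Two small corrections. First, the renewal identity holds only after discarding paths with fewer than $\delta_0\abs{\sfx}_1$ break points; the discarded contribution is controlled by \eqref{eq:breakpoints} and is exponentially negligible, but it must be carried through the generating-function computation rather than suppressed. Second, your claim that the $\beta$-uniformity of $\nu_{\sfg}$ ``is precisely where \eqref{eq:ratio-SAW}, hence $\chi>1$, is used'' is off: to keep the perturbation ${\rm e}^{-\chi\beta}\abs{\gamma}$ from eroding the SW mass gap $2\nu_0\beta$ of \eqref{eq:breakpoints-SAW} one only needs $\chi>0$ and $\beta$ large; the hypotheses $\chi>\frac12$ (resp.\ $\chi>1$ after the reformulation) are consumed entirely by the pinning-versus-repulsion competition in the later sections, not by \eqref{eq:exp-tails}. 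Your closing remark correctly locates the real difficulty: the quantitative degeneracy of the covariance as $\beta\to\infty$ is deliberately excluded from Theorem~\ref{thm:OZ} and is handled separately in Propositions~\ref{prop:aep-bep} and \ref{prop:LB-P}.
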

{
\begin{remark}
\label{rem:tilts}
In the sequel we shall sometimes employ an alternative notation $\sfh_\epsilon = \sfh_\sfx$ for
$\sfx\in\calQ_+$ satisfying $\frac{\sfx}{\abs{\sfx}_1} = (1-\epsilon  , \epsilon )$.
\end{remark}
}
\paragraph{\bf The target {upper} bound.}
Let us fix (without loss of generality) $\sfn$ with ${\rm arg}(\sfn )
\in [\frac{\pi}{2} , \frac{3\pi}{4}]$ and
%let
 $\sfx \in \calB_{+, \sfn}$.
%\calQ_+\cap \calH_{+ , \sfn}$.
To facilitate notation set
\be
\label{eq:Al-plus}
\frA_\ell^{+, \sfn} = \frA_\ell\cap\lbr \uGamma = [\ugamma , \ucalC] :
\ugamma\subset\calH_{+ , \sfn}\rbr ,
\ee
and, for any $\sfu , \sfv \in\calH_{+ , \sfn}$,  $\frA_\ell^{+ ,\sfn}(\sfu, \sfv ) =
\frA_\ell^{+ , \sfn} \cap\lbr \uGamma = [\ugamma , \ucalC] : \ugamma :\sfu\mapsto \sfv\rbr$.

Recall that $\sfh=\sfh_\sfx = \nabla\tau_\beta (\sfx )$ so that
$\sfh\cdot\sfx = \tau_\beta (\sfx )$.
In view of \eqref{eq:q+-crude} and \eqref{eq:exp-tails},
{the two-point function $G_\beta^{+ , \sfn} (\sfx )$  is bounded above by}
\be
\label{eq:Gbeta-decomp}
%\begin{split}
%&
%\leq
{
c_1 (\beta )
}
\!\!\!
\sumtwo{\sfu  , \sfx  - \sfv\in \calY} {\sfu,\sfv\in \calH_{+,\sfn}}
%{\sum_{\Gm{\sfl}: 0\mapsto \sfu}}^{\!\!\!\!*}\
% {\sum_{\Gm{\sfr}: \sfv\mapsto \sfx}}^{\!\!\!\!*}\
{\rm e}^{-\nu_{\sfg}\beta\lb
%\abs{ \Gm{\sfl}} + \abs{ \Gm{\sfr}}
\abs{\sfu}_1 +\abs{\sfx - \sfv}_1
\rb}
%&
\sum_{\ell}\
\sum_{\uGamma\in\frA_\ell^{+ ,\sfn}(\sfu, \sfv )} \bbP_{\beta}^{\sfh}\lb \uGamma \rb
{\rm exp}\lbr \sum_{\sfy\in \ugamma} {\rm e}^{-\chi\beta (
\dd_\sfn (\sfy )
+1)}\rbr .
\ee
We used that $\abs{\sfX (\uGamma )}_1\leq
\abs{\uGamma}$, where $|\sfx|_1$ is the $L^1$ norm of $\sfx$.
Therefore, in order to derive the upper bound in \eqref{eq:nopinning}, 
it suffices to check that
there exists $0 < {4}\delta <\nu_{\sfg}$ such that
\be
\label{eq:Target1}
\begin{split}
&\suptwo{\sfu , \sfx  - \sfv\in \calY} {\sfu,\sfv\in \calH_{+,\sfn}}%\sup_{\sfu, \sfx  - \sfv\in \calY}
{\rm e}^{-{3}\delta \beta\lb
%\abs{ \Gm{\sfl}} + \abs{ \Gm{\sfr}}
\abs{\sfu} _1+\abs{\sfx - \sfv}_1
\rb}
%&
\sum_{\ell}\
\sum_{\uGamma\in\frA_\ell^{+ ,\sfn}(\sfu, \sfv )} \bbP_{\beta}^{\sfh}\lb \uGamma \rb
{\rm exp}\Bigl[ \sum_{\sfy\in \ugamma} {\rm e}^{-\chi\beta (
%{\rm d}(\sfy ,
%\calH_{+ ,\sfn}^{\sfc})
\dd_\sfn (\sfy )
%1\vee \sfn\cdot \sfy
+1)}\Bigr]\\
&\qquad\qquad   \leq
c_2(\beta)
\, G_\beta \lb \sfx \big| \calP_{+ , \sfn} \rb  {\rm e}^{\tau_\beta (\sfx )} \end{split}
\ee
for $\beta$ sufficiently large,
uniformly in $\sfn\in [\frac{\pi}{2} , \frac{3\pi}{4}]$ and $\sfx\in
\calB_{+ ,\sfn}$.

\section{Effective random walk}
\label{sec:ERW}
Steps of the effective random walk are displacements $\sfX (\Gamma )$
along irreducible animals $\Gamma$ which are sampled from the probability
distribution \eqref{eq:OZ-weights}. In this way the constraint
$\underline{\gamma}\subset \calH_{+, \sfn}$ is less stringent than the constraint that
corresponding effective walk stays above the wall. In terms of effective
random walks upper bounds on partition functions
with pinning are given by quantities $\bbG^{\sfh}_{\beta , +}$ defined in
\eqref{eq:G-plus}. The corresponding effective random walk quantities for
models without pinning are probabilities $\bbP_{\beta , +}^{\sfh}$ which are
defined in \eqref{eq:rho-delta}. In the end of the Section we formulate
Theorem~\ref{thm:TwoBounds} and explain how it implies our target upper bound
 \eqref{eq:Target1} and, consequently, the upper bound in Theorem~\ref{thm:nopinning}.

\paragraph{\bf Random walk representation and high temperature expansion.}
Let us reformulate the required bound in the effective random walk context:
For a word  $\uGamma \in \frA_\ell$ with the left end point at $\sfu$ set
\be
\label{eq:Rwalk}
 \sfR_\ell = \sfR_\ell (\uGamma ) = \sfu +  \sum_{i=1}^\ell \sfX (\gm{i} ) \df
\sfu +\sum_{i=1}^\ell \sfX_i ,
\ee
and, accordingly, define $Z_i = \sfX_i\cdot\sfn$ and
\be
\label{eq:Swalk}
% T_\ell = \sfR_\ell\cdot \sfe_1\ = T_0 +\sum_1^\ell \tau_i\  {\rm and}\
S_\ell = \sfR_\ell\cdot \sfn\ = \sfu\cdot\sfn + \sum_{i=1}^n \sfX_i\cdot\sfn =
S_0 +\sum_1^\ell Z_i.
\ee
{
For the random walk starting at $\sfu$ the probability $\bbP_{\beta }^\sfh
\lb \sfR_\ell = \sfv \rb
 = \bbP_{\beta}^\sfh \lb \frA_\ell (\sfu , \sfv )\rb$.
}

Define events (sets of words)
\be
\label{eq:Rlplus-n}
\frR_\ell^{+ , \sfn} =
 \frR_\ell^+ = \lbr \uGamma\in \frA_\ell:\, S_1, \dots , S_{\ell -1} \geq 0\rbr .
\ee
With a slight abuse of notation we shall think of $\frR_\ell^+$ both as a subset of
$\frA_\ell$ and as a subset of $\frA_{m}$ for any $\ell\leq  m\leq \infty $.
The notation $\frR_\ell^+ (\sfu , \sfv )$ stands for $\ell$-strings of irreducible animals
from $\frR_\ell^+$ with the left end point at $\sfu$ and the right end point at $\sfv$;
{
$\frR_\ell^+ (\sfu , \sfv ) = \frR_\ell^+\cap \frA\lb \sfu , \sfv \rb$.
}
 Note
that $\frA_{\ell}^+ (\sfu , \sfv ) \subseteq \frR_{\ell}^+ (\sfu , \sfv )$.

Given a string $\uGamma$ define
(recall the definition \eqref{eq:D-shape} of diamond shapes):
$
 D_\ell =  D (\sfR_{\ell - 1} , \sfR_{\ell })
%\lb \sfR_{\ell - 1}+\calY\rb \cap \lb \sfR_{\ell }-\calY\rb .
$.
By construction,  $\Gm{\ell}\subset D_\ell$. Next define
$d_\ell \df d (\sfR_{\ell - 1} , \sfR_{\ell} )\ge0$ via

\be
\label{eq:d-ell}
d_\ell
=d (\sfR_{\ell - 1} , \sfR_{\ell} )
=\min_{\sfy\in D_\ell \cap\calH_{+,\sfn}\cap \bbZ_*^2}\lb \dd_\sfn (\sfy ) +1\rb -2  \geq 0.
\ee
For strings $\uGamma\in \frA_n^+ (\sfu , \sfv )$ the contour part $\gm{\ell}$ of the $\ell$-th irreducible animal
$\Gm{\ell}$ satisfies:
\be
\label{eq:w-weights}
 \sum_{\sfy\in \gm{\ell}} {\rm e}^{-\chi\beta (\dd_\sfn (\sfy ) +1 )} \leq \abs{\gm{\ell}}
{\rm e}^{-\chi\beta (2 +d_\ell )} \df  \phi_\beta (\gm{\ell} )  .
\ee
Note that the weight $ \phi_\beta (\gm{\ell} )$ just defined can be quite large. But this will
be compensated by the fact that the probability $\bbP_{\beta}^{\sfh}\lb \Gm{\ell} \rb$ of
the corresponding animal $\Gm{\ell}$ is very small.

By \eqref{eq:w-weights},
\be
\label{eq:ves}
\sum_{\uGamma\in\frA_\ell^{+ ,\sfn}(\sfu, \sfv )} \bbP_{\beta}^{\sfh}\lb \uGamma \rb
{\rm exp}\lbr \sum_{\sfy\in \ugamma} {\rm e}^{-\chi\beta (
\dd_\sfn (\sfy )
+1)}\rbr \leq
%+\sum_{\ell}\
\sum_{\uGamma\in\frR_\ell^{+ ,\sfn}(\sfu, \sfv )} \bbP_\beta^{\sfh}\lb \uGamma \rb
\prod_{j=1}^{\ell} e^{ \phi_\beta (\gm{j} )}
\ee
for any $\ell  = 1,2,3\dots $.
Define
\be
\label{eq:G-plus}
{
\bbG^{\sfh }_{\beta , +} (\sfu,\sfv)
}
=
\sum_{\ell}\
\sum_{\uGamma\in\frR_\ell^{+ ,\sfn}(\sfu, \sfv )} \bbP_\beta^{\sfh}\lb \uGamma \rb
\prod_{i=1}^{\ell} {\rm e}^{\phi_\beta (\gm{i} )}.
\ee
We conclude that
\be
\label{eq:Target2}
\text{ left hand side of \eqref{eq:Target1}}\le
\suptwo{\sfu, \sfx  - \sfv\in \calY} {\sfu,\sfv\in \calH_{+,\sfn}}
{\rm e}^{-{3} \delta \beta \lb \abs{\sfu }_1 +\abs{\sfx -\sfv }_1\rb}
 \bbG^{\sfh }_{\beta , +}  (\sfu,\sfv) .
\ee
Observe that
\be
\label{eq:abs-vs-d}
|\sfu|_1+|\sfv -\sfx|_1\ge
\dd_\sfn (\sfu )+\dd_\sfn (\sfv) {-2} ,
\ee
if $\sfu,\sfx-\sfv\in\calY$ and $\sfu,\sfv\in \calH_{+,\sfn}$.

With \eqref{eq:Target2} and \eqref{eq:abs-vs-d} in mind define:
\be
\label{eq:rho-delta}
\bbP_{\beta , +}^{\sfh} (\sfu , \sfv ) = \sum_\ell
%\sum_{\uGamma\in \frR_\ell^{+ , \sfn} (\sfu, \sfv )}
\bbP_\beta^{\sfh}\lb
%\uGamma
\frR_\ell^{+ , \sfn} (\sfu, \sfv )
\rb  \ {\rm and}\
 \rho_\delta (\sfu , \sfv ) =
\frac{
{\rm e}^{-\delta\beta\dd_\sfn (\sfu )}
\bbG^{\sfh }_{\beta , +} (\sfu ,\sfv )
{\rm e}^{-\delta\beta\dd_\sfn (\sfv )}
}
{
{\rm e}^{\delta\beta \dd_\sfn (\sfu )
%\blue{
% +\abs{\sfu}_1
%}
}
\bbP_{\beta ,+}^{\sfh}\lb
\sfu , \sfv\rb
{\rm e}^{\delta\beta\dd_\sfn (\sfv )}.
}.
\ee
Then, 
\eqref{eq:Target1}  and hence the
{
upper bound
}
 of Theorem~\ref{thm:nopinning} are
consequence of:
\begin{theorem}
 \label{thm:TwoBounds}
There exist $0 < \delta < \nu_{\sfg}/4$ and  $\beta_0$ sufficiently large such that
{
the following holds:
For any $\beta >\beta_0$ there exists a constant $c = c (\beta )$, such that
}
\begin{enumerate}
\item [(A)] uniformly
in  ${\rm arg}\lb \sfn \rb \in \left[\frac{\pi}{2} , \frac{3\pi}{4}\right]$
and in all $\sfu , \sfv \in \calH_{+, \sfn}$, $\sfu,\sfx-\sfv\in\calY$,
\be
\label{eq:TwoBounds1}
\rho_\delta (\sfu , \sfv )\leq
c(\beta).
\ee
\item [(B)] uniformly in $\sfx\in \calB_{+ ,\sfn}$,
\be
\label{eq:TwoBounds2}
\suptwo{\sfu, \sfx  - \sfv\in \calY} {\sfu,\sfv\in \calH_{+,\sfn}}
%\sup_{\sfu , \sfx - \sfv\in\calY}
{\rm e}^{-\delta\beta\lb\abs{\sfu}_1 +\abs{\sfx - \sfv}_1\rb} 
\bbP_{\beta , +}^{\sfh_\sfx} (\sfu , \sfv )
%\\
%&
\leq c (\beta )
\,
G_\beta \lb\sfx \big| \calP_{+ ,\sfn }\rb e^{\tau_\beta(\sfx)} .
%\end{split}
\ee
\end{enumerate}
\end{theorem}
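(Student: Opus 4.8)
\emph{Proof plan for Theorem~\ref{thm:TwoBounds}.} Both parts are statements about the effective walk $S_\ell = S_0 + \sum_1^\ell Z_i$, $Z_i = \sfX_i\cdot\sfn$, whose increments are governed by the tilted OZ law $\bbP_\beta^{\sfh}$ of \eqref{eq:OZ-weights}. The plan is to reduce (A) and (B) to fluctuation estimates for this walk --- the content of Propositions~\ref{lem:Plus-bounds}, \ref{prop:P-plus-bounds}, \ref{prop:LB-P} and \ref{prop:aep-bep} --- and then to assemble them through \eqref{eq:Target2} and \eqref{eq:abs-vs-d}. Throughout one may assume $\arg(\sfn)\in[\tfrac{\pi}{2},\tfrac{3\pi}{4}]$.

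\emph{Part (B): entropic repulsion.} First I would establish, via the local limit description of the effective walk together with an Alili--Doney identity \eqref{eq:AD-expBound} relating confined bridges to ladder heights, a two-sided estimate of the form $\bbP_{\beta,+}^{\sfh_\sfx}(\sfu,\sfv)\le c(\beta)\,\dd_\sfn(\sfu)\dd_\sfn(\sfv)\,g_\beta(\sfx)\,{\rm e}^{O(\delta\beta(\abs{\sfu}_1+\abs{\sfx-\sfv}_1))}$, where $g_\beta(\sfx)$ depends on $\sfx$ only and is of the same order as $G_\beta(\sfx\mid\calP_{+,\sfn}){\rm e}^{\tau_\beta(\sfx)}$ for $\sfx\in\calB_{+,\sfn}$; the factor $\dd_\sfn(\sfu)\dd_\sfn(\sfv)$ is the usual ballot-type gain for a confined bridge whose ends sit at heights $\dd_\sfn(\sfu),\dd_\sfn(\sfv)$, and the extra ${\rm e}^{O(\delta\beta(\cdots))}$ accounts for $\sfu,\sfv$ not lying on the $\sfx$-ray. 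Then (B) follows from the elementary inequality $\dd_\sfn(\sfu)\dd_\sfn(\sfv)\,{\rm e}^{-\delta\beta(\abs{\sfu}_1+\abs{\sfx-\sfv}_1)}\le c(\beta)$, which holds by \eqref{eq:abs-vs-d}. The same circle of estimates yields the (easier) lower bound of Theorem~\ref{thm:nopinning}: in \eqref{eq:q+-crude} one only has to know that $q$-typical confined contours stay far enough from $\calB_{+,\sfn}$ that $\sum_{\sfy\in\gamma}{\rm e}^{-\chi\beta(\dd_\sfn(\sfy)+1)}=O(1)$, which is precisely this repulsion statement.

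\emph{Part (A): interaction versus repulsion.} This is the heart. I would write $\prod_i {\rm e}^{\phi_\beta(\gm{i})}=\prod_i\bigl(1+({\rm e}^{\phi_\beta(\gm{i})}-1)\bigr)$ and, expanding the product, decompose each string by the \emph{first} irreducible animal $\Gm{j}$ that carries the factor ${\rm e}^{\phi_\beta(\gm{j})}-1$: the portion before it contributes $\bbP_{\beta,+}^{\sfh}$, the portion after it again contributes $\bbG_{\beta,+}^{\sfh}$. Dividing by the normalizers in \eqref{eq:rho-delta} turns this into the recursion \eqref{eq:Recursion}, rewritten as $\rho_\delta\le a_\delta+b_\delta\sup\rho_\delta$ in \eqref{eq:Recursion1}, with $a_\delta={\rm e}^{-2\delta\beta\dd_\sfn(\sfu)}\le 1$ the leading ``all ones'' term. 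Everything then rests on $b_\delta<1$ for $\beta$ large. Using ${\rm e}^{\phi_\beta(\gm{j})}-1\le\abs{\gm{j}}\,{\rm e}^{-\chi\beta(2+d_j)}{\rm e}^{\phi_\beta(\gm{j})}$ and the mass-gap estimate \eqref{eq:exp-tails} (so that $\abs{\Gamma}$ has exponential tails of rate $\nu_{\sfg}\beta$ and the event $\abs{\gm{j}}\gtrsim {\rm e}^{\chi\beta s}$ is super-exponentially rare), the conditional contribution of $\Gm{j}$ given that its diamond sits at height $\approx s$ above the wall is $O({\rm e}^{-\chi\beta s})$; reinserting the tail through $\rho_\delta$ at the reattachment point $\sfw$ brings the factor ${\rm e}^{2\delta\beta\dd_\sfn(\sfw)}={\rm e}^{2\delta\beta s+O(1)}$ from the normalizers of \eqref{eq:rho-delta}; and summing over the location $j$ and height $s$ of $\Gm{j}$ produces as coefficient $\sum_{s\ge1}(\text{expected number of visits of the confined walk to level }s)\,{\rm e}^{-(\chi-2\delta)\beta s}$. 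The crucial input --- delivered by the ladder-height Lemmas~\ref{lem:Np-k}, \ref{lem:Nplus}, \ref{lem:Nminus} and Proposition~\ref{lem:Plus-bounds} --- is that a confined bridge visits a low level $s$ an expected $O(s)$ times, \emph{uniformly in its length} (it must descend to the wall at its right end and hence lingers little at low heights). Hence the coefficient is $\le c\sum_{s\ge1}s\,{\rm e}^{-(\chi-2\delta)\beta s}=O({\rm e}^{-(\chi-2\delta)\beta})$, which is $<1$ once $\delta<\min(\nu_{\sfg}/4,\chi/2)$ and $\beta$ is large; so $\rho_\delta\le a_\delta/(1-b_\delta)=c(\beta)$, which is (A).

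\emph{Assembly and main obstacle.} Combining (A) with \eqref{eq:Target2}--\eqref{eq:abs-vs-d} to pass from $\bbG^{\sfh}_{\beta,+}$ to ${\rm e}^{-\delta\beta(\cdots)}\bbP^{\sfh}_{\beta,+}$, and then invoking (B), gives the target bound \eqref{eq:Target1}, hence the upper bound in \eqref{eq:nopinning}; together with the lower bound this is Theorem~\ref{thm:nopinning}, equivalently \eqref{eq:1}, and \eqref{eq:2tau} is immediate. I expect the main obstacle to lie entirely in the fluctuation theory underlying both halves: at low temperature the effective walk is not a classical i.i.d.\ walk with a smooth step law but a mixture of a small Gaussian part and a Poissonian ``staircase'' part whose only steps are right and up, and the wall direction $\sfn$ need not be a lattice direction; obtaining the uniform-in-$\beta$, uniform-in-$\sfn$ Alili--Doney asymptotics and, above all, the ``$O(s)$ expected visits to level $s$'' bound in this degenerate regime is what Sections~\ref{sec:A-D-RW} and \ref{sec:provaprop10} must supply. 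The renewal recursion of Part (A) is by contrast short, its only delicate point being that $b_\delta$ genuinely drops below $1$, which is exactly where $\chi>1$ is used: it must beat $2\delta$, the mass gap $\nu_{\sfg}$ leaving the room to choose such a $\delta$.
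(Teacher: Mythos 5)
Your structural reduction matches the paper's: expanding $\prod_i\bigl(1+({\rm e}^{\phi_\beta(\gm{i})}-1)\bigr)$ and isolating the animals that carry a nontrivial factor is exactly how \eqref{eq:Gbound-1} and the recursion $\rho_\delta\le 1+a_\delta+b_\delta\rho_\delta$ of \eqref{eq:Recursion1} arise (the paper splits off both the first \emph{and} the last such animal so that the middle block reconstitutes $\bbG^{\sfh}_{\beta,+}$ between two insertion points; your one-sided split is workable modulo relabelling, though your identification of $a_\delta$ with the ``all-ones'' term does not match \eqref{eq:a-delta}). Likewise your plan for (B) — reduce to the on-wall bridge and compare with $G_\beta\lb\sfx\,\big|\,\calP_{+,\sfn}\rb{\rm e}^{\tau_\beta(\sfx)}$ via Alili--Doney — is the route of Propositions~\ref{prop:P-plus-bounds} and \ref{prop:LowerBounds}.

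The quantitative heart of (A), however, is wrong as you state it. You claim the confined effective bridge visits level $s$ an expected $O(s)$ times uniformly in its length, and conclude $b_\delta\le c\sum_{s\ge 1}s\,{\rm e}^{-(\chi-2\delta)\beta s}<1$. That bound would yield absence of pinning for \emph{every} $\chi>0$ (take $\delta<\chi/2$), contradicting the sharpness example in the Introduction, where pinning does occur for $\chi\le\frac12$. What you are missing is precisely the ``subtle point'' the paper emphasizes: the transverse variance of the effective step is ${\rm e}^{-b_\epsilon}$ with $b_\epsilon$ as large as $\beta$ (Proposition~\ref{prop:aep-bep}), so for a (nearly) horizontal wall the walk takes of order ${\rm e}^{b_\epsilon}$ horizontal steps between consecutive vertical moves, each of which is a return to the same level. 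Accordingly Lemma~\ref{lem:Nplus}(a) gives $\bbE N^+\leq c\,\dd_\sfn(\sfs)\,{\rm e}^{b_\epsilon}$, not $O(\dd_\sfn(\sfs))$, and the bounds actually obtained in Section~\ref{sec:provaprop10} are $a_\delta\le c R\, b_\epsilon\,{\rm e}^{-(2\chi'\beta-b_\epsilon)}$ and $b_\delta\le cR^2 b_\epsilon\,{\rm e}^{-(4\chi'\beta-2b_\epsilon)}$. These are $\smof{1}$ only because the contact reward ${\rm e}^{-2\chi\beta}$ beats the return count ${\rm e}^{b_\epsilon}\le{\rm e}^{\beta}$, i.e.\ only because $\chi>\frac12$ in the normalization of Theorem~\ref{NoPin}. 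Your closing remark that $\chi$ ``must beat $2\delta$'' therefore misplaces where the hypothesis is used: as written, your argument never sees the competition between the exponentially small pinning reward and the exponentially degenerate transverse fluctuations, which is the actual content of the theorem.
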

Claim (B)
is an expression of entropic repulsion, and it
has the same flavour as the
lower bound of
Theorem~\ref{thm:nopinning}. We prove both in Subsection~\ref{sec:lb}.

{In order to see how (A) and (B) imply
our target upper bound \eqref{eq:Target1}  notice that}
\be
\label{eq:Indeed}
\begin{split}
 & {\rm e}^{-{3} \delta \beta \lb \abs{\sfu }_1 +\abs{\sfx -\sfv }_1\rb}
 \bbG^{\sfh }_{\beta , +} (\sfu,\sfv)
\leq \rho_\delta (\sfu , \sfv ) {{\rm e}^{4\delta\beta}}
{\rm e}^{- \delta \beta \lb \abs{\sfu }_1 +\abs{\sfx -\sfv }_1\rb}
\bbP_{\beta , +}^{\sfh_\sfx} (\sfu , \sfv ) \\
&\qquad
\stackrel{\eqref{eq:TwoBounds1},\eqref{eq:TwoBounds2}}{ \leq}
 c(\beta)^2 {{\rm e}^{4\delta\beta}}\,
G_\beta \lb\sfx \big| \calP_{+ ,\sfn }\rb e^{\tau_\beta(\sfx)}
{
\df c_2 (\beta ) G_\beta \lb\sfx \big| \calP_{+ ,\sfn }\rb e^{\tau_\beta(\sfx)}
}
,
\end{split}
\ee
where the first inequality follows from \eqref{eq:abs-vs-d} and from the very definition
of $\rho_\delta$ in \eqref{eq:rho-delta},
whereas the second inequality is precisely
\eqref{eq:TwoBounds1}
and \eqref{eq:TwoBounds2}.
%
%\eqref{eq:Target1}, \eqref{eq:ves}, \eqref{eq:G-plus}
 The target bound \eqref{eq:Target1} follows from \eqref{eq:Indeed} because of  \eqref{eq:Target2}.
\qed

\section{Proof of Theorem~\ref{thm:TwoBounds} and the
lower bound of Theorem~\ref{thm:nopinning}}
\label{sec:Thm-TB}

{In this Section we prove  Theorem~\ref{thm:TwoBounds} and the lower bound
 in Theorem~\ref{thm:nopinning}.
 Claim (A) of Theorem~\ref{thm:TwoBounds} is the most difficult part and
the arguments  hinge upon  crucial estimates of  Proposition~\ref{lem:Plus-bounds}.
%and on Proposition~\ref{prop:P-plus-bounds}.
The proof of the latter is  relegated
to Section~\ref{sec:provaprop10}. Claim (B) of Theorem~\ref{thm:TwoBounds} and
of the lower bound in Theorem~\ref{thm:nopinning} are somewhat simpler statements.
The proof is based on Proposition~\ref{prop:P-plus-bounds}
(which is in its turn proved in Section~\ref{sec:provaprop10}) and, in the very end -
see \eqref{eq:K-adelta}, on Proposition~\ref{lem:Plus-bounds}.

The impact of the pinning potential is encoded in the recursion \eqref{eq:Gbound-1},
which, by taking maxima,  leads to the uniform recursion \eqref{eq:Recursion1}.
Proposition~\ref{lem:Plus-bounds} ensures that \eqref{eq:Recursion1} implies Claim (A)
of Theorem~\ref{thm:TwoBounds}.
In Section~\ref{sec:provaprop10} decay properties \eqref{eq:Psi-ub} of
potential $\Psi_\beta^{\sfh , \delta}$
 play an important role in the proof of Proposition~\ref{lem:Plus-bounds}.

Claim (B) of Theorem~\ref{thm:TwoBounds} and the lower bound of
Theorem~\ref{thm:nopinning} are statements about entropic repulsion
 of the effective random walk away from $\calB_{+, \sfn}$.
 Our approach is based on \cite{CIL}.
Key facts  along these lines
are formulated in Proposition~\ref{prop:LowerBounds}. Claim (B) of Theorem~\ref{thm:TwoBounds}
(in the form of \eqref{eq:ClaimB}) and lower bound of Theorem~\ref{thm:nopinning}
(in the form of \eqref{eq:lb-nopinning}) are easy consequences.
The proof of
Proposition~\ref{prop:P-plus-bounds}, which gives an upper bound on the left hand side
of \eqref{eq:TwoBounds2} in terms of $\bbP_{\beta , +}^{\sfh_\sfx} (0, \sfx )$,
is relegated to Subsection~\ref{sub:proofPx}.
}
\subsection{Claim (A)}
\label{sec:claim-a}
Let us start by making one remark:
\begin{remark}
\label{rem:Out-of-A}
By the first of
\eqref{eq:breakpoints} and by the crude upper bound \eqref{eq:q+-crude}
\[
\log \rho_{\delta} (\sfu , \sfv ) \leq
  r_0 \abs{\sfv -  \sfu}_1{\rm e}^{-{2}\chi\beta} - 2 \delta \beta
\lb \dd_\sfn (\sfu ) + \dd_\sfn (\sfv )\rb ,
\]
which means that
 $\rho_\delta (\sfu , \sfv ) \leq 1$ 
unless $\sfu$ and $\sfv$ stay appropriately close to $\calB_{+ ,\sfn }$ in the sense that
the pair  $\lb \sfu, \sfv\rb  \in \calA$, where
\be
\label{eq:crude-rho}
 \calA \df \lbr (\sfu , \sfv): \,{\sfu, \sfx  - \sfv\in \calY}, {\sfu,\sfv\in \calH_{+,\sfn}},
 \dd_\sfn (\sfu ) + \dd_\sfn (\sfv ) \leq  \frac{ r_0 {\rm e}^{-{2} \chi\beta}}{
 {2}
 \delta  \beta}
\abs{\sfv -  \sfu }_1 \rbr .
\ee
In particular, since $\chi >\frac{1}{2}$, we may assume that there exists $\nu >1$ such that
\be
\label{eq:nu-uv-cond}
|\sfv -\sfu |_1 \geq \lb \dd_\sfn (\sfu ) + \dd_\sfn (\sfv )\rb {\rm e}^{\nu\beta }\quad
\text{and}\quad |\arg (\sfv - \sfu ) - \arg (\sfn^\perp )| \leq {\rm e}^{-\nu\beta} ,
\ee
where $\sfn^\perp \df  (n_2 ,-n_1 )$.
There is no loss of generality (otherwise we would just consider the reversed
walk) to assume that $\sfn \cdot (\sfv - \sfu ) \geq 0$. This ensures that the drift $\sfh =
\nabla\tau_\beta (\sfv -\sfu )$ has non-negative entries.
\end{remark}
{  \qed} 

The proof   of the claim A    comprises several steps.

\noindent
\step{1} (Recursion)
Manipulating expansions of
\be
\label{eq:low-temp-pf}
%{\rm e}^{\sum \phi_\beta (\gm{i} )}
\prod_{i=1}^{\ell} {\rm e}^{\phi_\beta (\gm{i} )}
= \prod_{i=1}^{\ell}\lb 1 +\lb
{\rm e}^{\phi_\beta (\gm{i} )} - 1\rb\rb ,
\ee
 we infer that
\be
\label{eq:Gbound-1}
\begin{split}
 &\bbG^{\sfh }_{\beta , +}  (\sfu ,\sfv )  =
\bbP_{\beta , +}^{\sfh} (\sfu , \sfv )  +
\sum_{\sfw ,\sfz}\bbP_{\beta , +}^{\sfh} (\sfu , \sfw )
\Phi_{\beta , +}^\sfh (\sfw , \sfz)
\bbP_{\beta , +}^{\sfh} (\sfz , \sfv ) \\
&+ \quad
\sum_{\sfw_1 , \sfz_1}\sum_{\sfw_2 , \sfz_2}
%\sum_{\sfw ,\sfz}
\bbP_{\beta , +}^{\sfh} (\sfu , \sfw_1 )
\Phi_{\beta }^\sfh (\sfw_1 , \sfz_1)
\bbG^{\sfh }_{\beta , +}  (\sfz_1 ,\sfw_2)
\Phi_{\beta }^\sfh (\sfw_2 , \sfz_2)
\bbP_{\beta , +}^{\sfh} (\sfz_2 , \sfv ) ,
\end{split}
\ee
where we have defined:
\be
\label{eq:Quantities}
\Phi_{\beta }^\sfh (\sfw , \sfz ) =
%\sum_m
\bbE_\beta^{\sfh}\lb
\1_{
\sfA (\sfw ,\sfz )}\lb
{\rm e}^{\phi_\beta (\gamma )} - 1\rb \rb .
\ee
Equation \eqref{eq:Gbound-1} gives rise to the following recursion:
Set
\be
\label{eq:RecQuant-P}
\begin{split}
\underline{\bbP}_{\beta , +}^{\sfh , \delta}
(\sfs , \sft ) &=
{\rm e}^{-\delta\beta\dd_\sfn (\sfs )}
{\bbP}_{\beta , +}^{\sfh}
(\sfs , \sft )
{\rm e}^{-\delta\beta\dd_\sfn (\sft )}, \\
\bar{\bbP}_{\beta , +}^{\sfh , \delta}
%(\sfu , \sfv )
(\sfu , \sfv ) & =
{\rm e}^{\delta\beta \dd_\sfn (\sfu )
}
{\bbP}_{\beta , +}^{\sfh}
(\sfu , \sfv )
 {{\rm e}^   {\delta\beta\dd_\sfn (\sfv)}}
\end{split}
\ee
and
\be
\label{eq:RecQuant-Psi}
\Psi_{\beta }^{\sfh,\delta} (\sfw , \sfz ) =
%\sum_m
{\rm e}^{3\delta\beta\dd_\sfn (\sfw)}
\Phi_{\beta }^{\sfh} (\sfw , \sfz )
{\rm e}^{3\delta\beta\dd_\sfn (\sfz)}.
\ee
With this notation \eqref{eq:Gbound-1} and \eqref{eq:rho-delta} imply:
\be
\label{eq:Recursion}
\begin{split}
&\rho_{\delta} (\sfu , \sfv ) \leq {\rm e}^{-2\delta\beta \lb
\dd_\sfn (\sfu) + \dd_\sfn (\sfv)
\rb
}
 +
\sum_{\sfw ,\sfz}
\frac{
\underline{\bbP}_{\beta , +}^{\sfh , \delta} (\sfu , \sfw )
\Psi_{\beta }^{\sfh , \delta} (\sfw , \sfz)
\underline{\bbP}_{\beta , +}^{\sfh , \delta} (\sfz , \sfv)
}
{
\bar\bbP_{\beta , +}^{\sfh , \delta} (\sfu , \sfv )
}\\
&
+
\sum_{\sfw_1 , \sfz_1}\sum_{\sfw_2 , \sfz_2}
\frac{
\underline{\bbP}_{\beta , +}^{\sfh , \delta} (\sfu , \sfw_1 )
\Psi_{\beta}^{\sfh , \delta} (\sfw_1 , \sfz_1)
\underline{\bbP}_{\beta , +}^{\sfh , \delta} (\sfz_1 , \sfw_2 )
\Psi_{\beta }^{\sfh , \delta} (\sfw_2 , \sfz_2)
\underline{\bbP}_{\beta , +}^{\sfh , \delta} (\sfz_2 , \sfv )
}
{
\bar\bbP_{\beta , +}^{\sfh , \delta} (\sfu , \sfv )
}
\rho_\delta (\sfz_1 , \sfw_2 ) .
\end{split}
\ee

\begin{remark}
None of the ratios in  \eqref{eq:Recursion}
depends on the drift $\sfh$. It will be convenient to take $\sfh = \nabla
\tau_\beta (\sfv -\sfu )$.
\end{remark}

\noindent
\step{2} (Bounds on $\Psi_\beta^{\sfh,\delta}$)
Recall the definition of the weights $\phi_\beta$ in \eqref{eq:w-weights}. Then,
\be
\label{eq:Psi-delta}
%\begin{split}
 \Psi_\beta^{\sfh,\delta} (\sfw , \sfz )
{
\,
%\leq
= \,
}
{\rm e}^{
3\delta\beta \dd_\sfn (\sfw )} \bbE_{\beta}^{\sfh} \1_{\sfA (\sfw , \sfz )} \lb {\rm exp}\lb \abs{\gamma}{\rm e}^{-\beta\chi
\lb 2 + d (\sfw , \sfz )\rb} \rb -1\rb
{\rm e}^{ 3
\delta\beta \dd_\sfn (\sfz )} ,
%\end{split}
\ee
where the function $d (\sfw , \sfz )\geq 0$ was already defined in
 \eqref{eq:d-ell}:
\be
\label{eqdwz}
d (\sfw , \sfz ) =\min_{\sfy \in D (\sfw, \sfz )\cap\calH_{+,\sfn }\cap \bbZ_*^2} \lb \dd_\sfn (\sfy )+1\rb -2 .
\ee
\begin{lemma}
\label{lem:Psi-Bound}
There exist $\nu_2, \nu_3 >0$
{and $c_1, R <\infty$ such that for any
$\chip <\chi$ one can choose $\delta_0 >0$, such that,
}
uniformly in $\delta\leq \delta_0$, admissible pairs $\lbr\lb\sfw, \sfz\rb :
\sfz\in\sfw +\calY\rbr $
and $\beta$ large, the following holds:
\be
\label{eq:Psi-ub}
\Psi_{\beta , \delta}^\sfh (\sfw , \sfz ) \leq c_1\,
{\rm e}^{-2\chip \beta}
 \calK_\beta (\sfw , \sfz ),
\ee
where the kernel $\calK_\beta$ is given by
\be
\label{eq:K-kernel}
\calK_\beta (\sfw , \sfz  ) =
%\1_{\lbr \sfz\in\sfw +\calY\rbr}
\exp\left[{-\nu_3\beta \lb \dd_\sfn (\sfw )- R\rb_{+}} {-\nu_2\beta \abs{\sfz - \sfw}_1\1_{\abs{\sfz - \sfw}_1 >1}} {-\nu_3\beta \lb \dd_\sfn (\sfz )- R\rb_+ }\right].
\ee
\end{lemma}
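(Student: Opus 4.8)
The plan is to estimate the expectation in \eqref{eq:Psi-delta} by separating the contribution of the exponential factor $\exp(|\gamma| {\rm e}^{-\beta\chi(2+d(\sfw,\sfz))}) - 1$ from the decay of $\bbP_\beta^\sfh$ on the alphabet $\sfA(\sfw,\sfz)$, and then to absorb the prefactors ${\rm e}^{3\delta\beta\dd_\sfn(\cdot)}$ into the exponential mass-gap decay \eqref{eq:exp-tails}. First I would note that for $\Gamma=[\gamma,\ucalC]\in\sfA(\sfw,\sfz)$ the displacement satisfies $|\sfw-\sfz|_1 \le |\sfX(\Gamma)|_1 \le |\gamma| = |\Gamma|$, so the mass-gap estimate \eqref{eq:exp-tails} gives both $\bbP_\beta^\sfh(\sfA(\sfw,\sfz)) \le c\,{\rm e}^{-\nu_\sfg\beta|\sfw-\sfz|_1}$ and, more importantly, exponential control on $|\gamma|$ under $\bbP_\beta^\sfh$. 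Since ${\rm e}^x - 1 \le x{\rm e}^x$, I would bound the bracket in \eqref{eq:Psi-delta} by $|\gamma|\,{\rm e}^{-\beta\chi(2+d(\sfw,\sfz))}\exp(|\gamma|{\rm e}^{-\beta\chi(2+d(\sfw,\sfz))})$ and then take expectation. Because ${\rm e}^{-\beta\chi(2+d)} \le {\rm e}^{-2\beta\chi}$ is exponentially smaller than the mass gap $\nu_\sfg\beta$, the exponential moment $\bbE_\beta^\sfh[\1_{\sfA(\sfw,\sfz)}|\gamma|\exp(\lambda|\gamma|)]$ with $\lambda = {\rm e}^{-\beta\chi(2+d)}$ is controlled by a shifted mass-gap estimate and is bounded by $c\,{\rm e}^{-(\nu_\sfg - o(1))\beta|\sfw-\sfz|_1}$, producing the overall prefactor ${\rm e}^{-\beta\chi(2+d(\sfw,\sfz))} \le {\rm e}^{-2\chi\beta}{\rm e}^{-\beta\chi d(\sfw,\sfz)}$ times an $L^1$-decay in $|\sfw-\sfz|_1$.

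The main point is then to convert the two factors ${\rm e}^{3\delta\beta\dd_\sfn(\sfw)}$, ${\rm e}^{3\delta\beta\dd_\sfn(\sfz)}$ and the factor ${\rm e}^{-\beta\chi d(\sfw,\sfz)}$ into the kernel \eqref{eq:K-kernel}. The geometry here is that $\dd_\sfn(\sfw)$, $\dd_\sfn(\sfz)$ and the ``in-between'' distance $d(\sfw,\sfz)$ are comparable up to the displacement: since $\sfz\in\sfw+\calY$ and the opening angle of $\calY$ is bounded away from $\pi/2$, one has $\dd_\sfn(\sfz) \le \dd_\sfn(\sfw) + c|\sfw-\sfz|_1$ and, crucially, $d(\sfw,\sfz) \ge \min(\dd_\sfn(\sfw),\dd_\sfn(\sfz)) - c|\sfw-\sfz|_1 - O(1)$, because the diamond $D(\sfw,\sfz)$ only reaches closer to the wall than its endpoints by an amount controlled by $|\sfw-\sfz|_1$. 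Therefore ${\rm e}^{-\beta\chi d(\sfw,\sfz)} \le {\rm e}^{-\chi\beta(\dd_\sfn(\sfw)-R)_+ - \cdots}$ up to a factor ${\rm e}^{c\chi\beta|\sfw-\sfz|_1}$ for a suitable constant $R$, and the latter is absorbed into the $\nu_\sfg\beta|\sfw-\sfz|_1$ decay provided $c\chi < \nu_\sfg$, which holds after possibly shrinking the relevant constants. Choosing $\nu_3 = \chi$ (or any positive number $\le\chi$), $\nu_2$ a small fraction of $\nu_\sfg$, and then picking $\delta_0$ so small that $6\delta_0 < \nu_2$ (so the two ${\rm e}^{3\delta\beta\dd_\sfn}$ factors are swallowed by the $\dd_\sfn$ decay coming from $d(\sfw,\sfz)$ when $\dd_\sfn$ is large, with the bounded regime $\dd_\sfn \le R$ contributing only a $\beta$-dependent constant $c_1$), one obtains \eqref{eq:Psi-ub} with $2\chi'$ in place of $2\chi$ for any $\chi'<\chi$ — the slack $\chi-\chi'$ is exactly what pays for the various crossterms $|\sfw-\sfz|_1$-dependent losses and the $\delta\beta$ inflations.

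I expect the main obstacle to be the bookkeeping in the geometric step: making precise the inequality $d(\sfw,\sfz) \ge \min(\dd_\sfn(\sfw),\dd_\sfn(\sfz)) - c|\sfw-\sfz|_1 - O(1)$ uniformly in the direction $\sfn$ (including non-lattice directions with $\arg(\sfn)\in[\pi/2,3\pi/4]$), and then tracking which of the three decay rates $\nu_2,\nu_3$ and the residual $\nu_\sfg$-decay absorbs which inflation factor, so that all constants can be chosen in the stated order ($\nu_2,\nu_3,c_1,R$ first, then $\delta_0$ depending on $\chi-\chi'$). The analytic input (the exponential-moment bound on $|\gamma|$) is routine given \eqref{eq:exp-tails}; the delicate part is purely the uniform comparison of wall-distances across a diamond, together with ensuring the final rates $\nu_2,\nu_3$ are genuinely $\beta$-independent.
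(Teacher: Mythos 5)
Your proposal is correct and follows essentially the same route as the paper: the elementary bound ${\rm e}^x-1\le x{\rm e}^x$, the mass-gap estimate \eqref{eq:exp-tails} to control the exponential moment of $|\gamma|$, and a geometric comparison of $d(\sfw,\sfz)$ with the endpoint distances $\dd_\sfn(\sfw),\dd_\sfn(\sfz)$ up to $O(|\sfw-\sfz|_1)$ (the paper's \eqref{eq:alpha-ratio}), followed by the parameter bookkeeping of \eqref{eq:new-param}. The only caveat is quantitative: with your min-version of the geometric inequality you cannot literally take $\nu_3=\chi$ (when $\dd_\sfn(\sfw)\approx\dd_\sfn(\sfz)$ are both large you need roughly $\nu_3\le\chi/2$, combined with the Lipschitz relation between the two wall-distances), but your parenthetical ``any positive number $\le\chi$'' and the flexibility of the constants cover this.
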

{
\begin{remark}
 \label{rem:Rstrip}
Lemma~ states that $\Psi_{\beta , \delta}^\sfh (\sfw , \sfz )$
is at most of order $\exp(-2\chi'\beta)$ and that the kernel
$\calK_\beta (\sfw , \sfz )$
 decays exponentially
both
in $|\sfz-\sfw|_1$
and in the distances $\dd_\sfn (\sfw ) , \dd_\sfn (\sfz )$ from the wall.
In particular, $\sum_{\sfw , \sfz} \calK_\beta (\sfw , \sfz )$ is essentially a one dimensional
sum over lattice points inside a strip of width $R$ along $\partial \calH_{+ , \sfn }$.
\end{remark}
}
\paragraph{\bf Proof of Lemma~\ref{lem:Psi-Bound}}
By construction of  diamond shapes there exists $\alpha \in \bbN$, such that
\begin{equation}
\label{eq:alpha-ratio}
d (\sfw , \sfz ) \geq\frac{1}{2}\lb  \lb \dd_\sfn (\sfz ) - \alpha |\sfw -\sfz|_1\rb_+
+ \lb \dd_\sfn (\sfw ) - \alpha |\sfw -\sfz|_1\rb_+ \rb .
\end{equation}
Above $a_+ \df a\vee 0$. To facilitate  notation set
\be
\label{eq:fwz}
 f_{\sfw\sfz} (k) = k {\rm e}^{-2\chi\beta -\frac{\chi\beta}{2}\lb  \lb \dd_\sfn (\sfz )
- \alpha |\sfw -\sfz|_1\rb_+
+ \lb \dd_\sfn (\sfw ) - \alpha |\sfw -\sfz|_1\rb_+ \rb} \ee
Since $\abs{\sfX (\gamma )}\leq \abs{\gamma}$,
 and in view of \eqref{eq:alpha-ratio},
\be
\label{eq:step4-bound-Psi}
\begin{split}
\Psi_{\beta , \delta}^\sfh (\sfw , \sfz ) &\leq
{\rm e}^{3\delta\beta\lb \dd_\sfn (\sfw ) + \dd_\sfn (\sfz )\rb}
\sum_{k\geq \abs{\sfz -\sfw}_1}
\lb
{\rm e}^{f_{\sfw\sfz} (k)} - 1\rb
%{\rm exp}\lbr k {\rm e}^{-2\chi\beta - \chi\beta\lb \dd_\sfn (\sfz ) -\alpha k\rb_+}\rbr - 1\rb
\bbP_\beta^\sfh \lb \abs{\gamma} = k\rb \\
&\ \ \leq
{\rm e}^{3\delta\beta\lb \dd_\sfn (\sfw ) + \dd_\sfn (\sfz )\rb}
\sum_{k\geq \abs{\sfz -\sfw}_1}
f_{\sfw\sfz} (k) {\rm e}^{f_{\sfw\sfz} (k)} \bbP_\beta^\sfh \lb \abs{\gamma} = k\rb \\
&\ \ \leq
{\rm e}^{3\delta\beta\lb \dd_\sfn (\sfw ) + \dd_\sfn (\sfz )\rb}
\sum_{k\geq \abs{\sfz -\sfw}_1}
f_{\sfw\sfz} (k) {\rm e}^{f_{\sfw\sfz} (k) - 2\nu_{\sfg}\beta k\1_{k>1 }} \\
&\ \ \leq 2{\rm e}^{3\delta\beta\lb \dd_\sfn (\sfw ) + \dd_\sfn (\sfz )\rb} \sum_{k\geq \abs{\sfz -\sfw}_1}  f_{\sfw\sfz} (k) {\rm e}^{-\nu_{\sfg}\beta k\1_{k>1 }} .
\end{split}
\ee
In the last two  inequalities we relied on \eqref{eq:exp-tails} and on \eqref{eq:fwz}.

{Let us take a closer look at the definition  \eqref{eq:fwz} on $f_{\sfw\sfz} (k)$.
}
Recall that $\alpha, \nu_{\sfg} $ and $\chi$ are positive constants which
do not depend of $\beta$.
{Fix $\chip <\chi$ and $R >2\alpha$. Then, one can choose $\nu_2, \nu_3 >0$
and  $\delta_0 >0$ so small,  so that
for any $\delta\leq \delta_0$ and for any  non-negative numbers $a, b\geq 0$,
}
\be
\label{eq:new-param}
\begin{split}
&\nu_{\sfg} k\1_{k >1} +2\chi +\frac{\chi}{2}\lb \lb a-\alpha k\rb_+ + \lb b-\alpha k\rb_+\rb
-3\delta a {-3\delta b}\\
&\quad
\geq  \nu_2 k\1_{k >1} + 2\chip + \nu_3 \lb \lb a- R\rb_+ + \lb b- R\rb_+\rb .
\end{split}
\ee
Hence \eqref{eq:Psi-ub}.\qed
\smallskip

\noindent
\step{3} (Substitution and analysis of the Recursion \eqref{eq:Recursion})
As we noted in Remark~\ref{rem:Out-of-A} $\rho_\delta (\sfu , \sfv )\leq {1}$ for
$\lb\sfu , \sfv\rb\not\in\calA$.
Define
\[
 \rho_\delta = \max \{\sup_{(\sfu , \sfv)\in\calA} \rho_\delta (\sfu , \sfv ), {1} \} .
\]
Then \eqref{eq:Recursion} implies
\be
\label{eq:Recursion1}
\rho_\delta \leq 1 + a_\delta + b_\delta \rho_\delta ,
\ee
where
\be
\label{eq:a-delta}
a_\delta = \sup_{\lb\sfu , \sfv \rb\in\calA}
\sum_{\sfw ,\sfz}
\frac{
\underline{\bbP}_{\beta , +}^{\sfh , \delta} (\sfu , \sfw )
\Psi_{\beta }^{\sfh , \delta} (\sfw , \sfz)
\underline{\bbP}_{\beta , +}^{\sfh , \delta} (\sfz , \sfv)
}
{
\bar\bbP_{\beta , +}^{\sfh , \delta} (\sfu , \sfv )
} ,
\ee
and, accordingly,
\be
\label{eq:b-delta}
b_\delta = \sup_{\lb\sfu , \sfv \rb\in\calA}
\sum_{\sfw_1 , \sfz_1}\sum_{\sfw_2 , \sfz_2}
\frac{
\underline{\bbP}_{\beta , +}^{\sfh , \delta} (\sfu , \sfw_1 )
\Psi_{\beta}^{\sfh , \delta} (\sfw_1 , \sfz_1)
\underline{\bbP}_{\beta , +}^{\sfh , \delta} (\sfz_1 , \sfw_2 )
\Psi_{\beta }^{\sfh , \delta} (\sfw_2 , \sfz_2)
\underline{\bbP}_{\beta , +}^{\sfh , \delta} (\sfz_2 , \sfv )
}
{
\bar\bbP_{\beta , +}^{\sfh , \delta} (\sfu , \sfv )
}.
\ee
Our target bound \eqref{eq:TwoBounds1} follows then from {\eqref{eq:Recursion1} and:}
{
%In view of \eqref{eq:Recursion1} we need to check that
\begin{proposition}
\label{lem:Plus-bounds}
Fix $\delta >0$. Then
\be
\label{eq:Recursion-target}
a_\delta < \infty\ {\rm and}\ b_\delta <1 ,
\ee
uniformly in  $\sfn\in \left[\frac{\pi}{2} , \frac{3\pi}{4}\right]$,
 in all $\lb \sfu , \sfv \rb$ satisfying \eqref{eq:nu-uv-cond}
and
in all $\beta$ sufficiently large.
\end{proposition}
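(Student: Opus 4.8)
The plan is to feed the bound of Lemma~\ref{lem:Psi-Bound} into the definitions \eqref{eq:a-delta}--\eqref{eq:b-delta} of $a_\delta,b_\delta$ and then to reduce everything to a single sharp two-sided estimate on the constrained effective-walk Green's function $\bbP_{\beta ,+}^{\sfh}$. By the Remark following \eqref{eq:Recursion} none of the ratios involved depends on the tilt, so we take $\sfh=\nabla\tau_\beta (\sfv-\sfu )$, and by Remark~\ref{rem:Out-of-A} we may assume $\lb\sfu ,\sfv\rb$ satisfies \eqref{eq:nu-uv-cond}: thus $\sfu ,\sfv$ lie within an $O(1)$-strip of $\partial\calH_{+,\sfn}$ while $\abs{\sfv-\sfu}_1$ exceeds $\dd_\sfn (\sfu )+\dd_\sfn (\sfv )$ by a factor ${\rm e}^{\nu\beta }$ with $\nu$ close to $2\chi>2$. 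Using \eqref{eq:Psi-ub} we replace $\Psi_\beta^{\sfh ,\delta}$ by $c_1{\rm e}^{-2\chi'\beta }\calK_\beta$ for some $\chi'\in (1,\chi )$, where by \eqref{eq:K-kernel} the kernel $\calK_\beta$ confines every intermediate wall-site to an $O(R)$-strip along $\partial\calH_{+,\sfn}$ and decays geometrically in the inter-site distance. Then $a_\delta$ (resp.\ $b_\delta$) is at most $c_1{\rm e}^{-2\chi'\beta }$ (resp.\ $c_1^2{\rm e}^{-4\chi'\beta }$) times a supremum over admissible $\lb\sfu ,\sfv\rb$ of a sum over one (resp.\ two) intermediate wall-site pairs of a product of two (resp.\ three) factors $\underline{\bbP}_{\beta ,+}^{\sfh ,\delta}$ divided by $\bar{\bbP}_{\beta ,+}^{\sfh ,\delta}(\sfu ,\sfv )$, weighted by $\calK_\beta$; since the ${\rm e}^{\pm\delta\beta\dd_\sfn }$ weights inside $\underline{\bbP}/\bar{\bbP}$ are all $\le1$, they may be discarded, leaving bare ratios of $\bbP_{\beta ,+}^{\sfh}$.

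The core input is the asymptotics (which is exactly what the fluctuation and Alili--Doney analysis of Section~\ref{sec:A-D-RW} --- Proposition~\ref{prop:LB-P}, Proposition~\ref{prop:aep-bep} and the ladder-height Lemmas --- must supply): writing $\sigma_\perp^2$ for the variance of $\sfX_1\cdot\sfn$, which is $\asymp {\rm e}^{-2\beta }$ (this is the quantitative form of the degeneracy of the transverse variance, and the rate $2$ read off from \eqref{eq:w-weights}--\eqref{eq:K-kernel} is exactly what forces the threshold $\chi>1$), one has, uniformly in $\sfn\in[\frac{\pi}{2},\frac{3\pi}{4}]$ and in admissible endpoints,
\[
\bbP_{\beta ,+}^{\sfh}(\sfs ,\sft )\ \asymp\ \frac{\dd_\sfn (\sfs )\,\dd_\sfn (\sft )}{\sigma_\perp^{2}\,\abs{\sft -\sfs}_1^{3/2}}\, ,
\]
with the right-hand side replaced by (a bound of order) $\abs{\sft -\sfs}_1^{-1/2}$ on the short (Poissonian) scales $\abs{\sft -\sfs}_1\lesssim\sigma_\perp^{-2}$. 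The exponent $3/2$ is the product of the OZ longitudinal decay $\abs{\cdot}^{-1/2}$ of \eqref{eq:surface-tension} and the ballot-type survival factor $\dd_\sfn (\sfs )\dd_\sfn (\sft )/\lb\sigma_\perp^{2}\abs{\sft -\sfs}_1\rb$ of the transverse component conditioned to stay positive; condition \eqref{eq:nu-uv-cond} (with $\nu$ close to $2\chi>2$) is precisely what makes the endpoint heights small compared with the fluctuation scale $\sigma_\perp\sqrt{\abs{\sfv-\sfu}_1}$, so that this regime applies. Telescoping the asymptotics, and using that displacements and surface tensions are additive up to an $O(\abs{\sfw-\sfz}_1)$ error (all vectors in sight are nearly aligned with $\sfn^\perp$ by \eqref{eq:nu-uv-cond}, so $\tau_\beta (\sfw-\sfu )+\tau_\beta (\sfv-\sfz )-\tau_\beta (\sfv-\sfu )=O(\abs{\sfw-\sfz}_1)$), gives for the $a_\delta$-summand
\[
\frac{\bbP_{\beta ,+}^{\sfh}(\sfu ,\sfw )\,\bbP_{\beta ,+}^{\sfh}(\sfz ,\sfv )}{\bbP_{\beta ,+}^{\sfh}(\sfu ,\sfv )}\ \lesssim\ \frac{\dd_\sfn (\sfw )\,\dd_\sfn (\sfz )}{\sigma_\perp^{2}}\cdot\frac{\abs{\sfv-\sfu}_1^{3/2}}{\abs{\sfw-\sfu}_1^{3/2}\,\abs{\sfv-\sfz}_1^{3/2}}\cdot {\rm e}^{O(\abs{\sfw-\sfz}_1)}\, ,
\]
and a three-fold analogue for the $b_\delta$-summand.

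It remains to carry out the summation. On the $O(R)$-strip one has $\dd_\sfn (\sfw )\dd_\sfn (\sfz )\le R^{2}$ (and the $\nu_3$-factors in $\calK_\beta$ absorb the far-from-wall tails); the $\nu_2$-factor in $\calK_\beta$ makes the sum over the gap $\sfw-\sfz$ a convergent geometric series whose $O(1)$ value absorbs ${\rm e}^{O(\abs{\sfw-\sfz}_1)}$ once $\beta$ is large; and the remaining sum over the longitudinal position of $\sfw$ is, with $L=\abs{\sfv-\sfu}_1$ and $L_1=\abs{\sfw-\sfu}_1$, controlled by $\sum_{L_1=1}^{L-1}L^{3/2}L_1^{-3/2}(L-L_1)^{-3/2}\le C$ uniformly in $L$ (summable near each endpoint because $3/2>1$, and of order $L^{-3/2}$ in the bulk), while the short-scale part $L_1\lesssim\sigma_\perp^{-2}$, handled with the capped estimate, contributes only $O(\sigma_\perp^{-1})$ and is subdominant. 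For $b_\delta$ the three-fold sum $\sum_{L_1+L_2+L_3=L}L^{3/2}(L_1L_2L_3)^{-3/2}$ is likewise bounded uniformly in $L$ (some $L_i\ge L/3$, and $\sum_k k^{-3/2}<\infty$ twice over). Collecting prefactors yields $a_\delta\lesssim {\rm e}^{-2\chi'\beta }\sigma_\perp^{-2}\asymp {\rm e}^{-2(\chi'-1)\beta }$ and $b_\delta\lesssim {\rm e}^{-4\chi'\beta }\sigma_\perp^{-4}\asymp {\rm e}^{-4(\chi'-1)\beta }$, uniformly in $\sfn\in[\frac{\pi}{2},\frac{3\pi}{4}]$ and in endpoints obeying \eqref{eq:nu-uv-cond}; since $\chi'>1$, for $\beta$ large this gives $a_\delta<\infty$ and $b_\delta<1$, i.e.\ \eqref{eq:Recursion-target}.

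The main obstacle is the Green's-function asymptotics itself: one must prove it as a genuine two-sided bound with the $\sigma_\perp$-dependence made explicit (since $\sigma_\perp\to0$ as $\beta\to\infty$, tracking it rather than absorbing it into constants is essential), simultaneously in the Gaussian regime $\abs{\sft-\sfs}_1\gg\sigma_\perp^{-2}$ and in the Poissonian short-scale regime, and for possibly non-lattice wall directions $\sfn$ --- this is the content reserved for Section~\ref{sec:A-D-RW} and for Proposition~\ref{prop:P-plus-bounds}. A secondary but genuine point is that the decay order must be exactly $3/2$: were the constrained two-point function to decay only like $\abs{\cdot}^{-1}$, the longitudinal position sum would diverge logarithmically in $L$ and the whole scheme would collapse.
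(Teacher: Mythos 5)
Your overall architecture coincides with the paper's (insert the bound \eqref{eq:Psi-ub} on $\Psi_\beta^{\sfh,\delta}$, reduce to ratios of constrained Green's functions of the effective walk, and sum along the $O(R)$-strip that $\calK_\beta$ confines the wall-sites to), but the quantitative input you feed into it contains a concrete error that breaks the accounting. The transverse step variance is not ${\rm e}^{-2\beta}$: by \eqref{eq:BasicProb} a transverse excursion ($\Gamma_2$, or $\Gamma_3$ in the descending direction) costs exactly one extra unit of contour length, so $\bbP_\beta^{\sfh_\epsilon}\lb \sfX\cdot\sfn =\pm 1\rb \asymp {\rm e}^{-b_\epsilon}\asymp{\rm e}^{-\beta}$ and hence $\sigma_\perp^2\asymp{\rm e}^{-b_\epsilon}$. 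With the correct variance your final estimate becomes $a_\delta\lesssim {\rm e}^{-2\chi'\beta+b_\epsilon}\leq {\rm e}^{-(2\chi'-1)\beta}$, which is the paper's \eqref{eq:3} and yields the threshold $\chi>1/2$ of Theorem~\ref{NoPin}; your version yields only $\chi>1$ and so does not prove the Proposition in the full range where it is used. Relatedly, the short-scale (Poissonian) regime is not subdominant --- it is where the entire entropic gain comes from. By Proposition~\ref{prop:LB-P} the Green's function is $\asymp 1$ (not $\lesssim |\sft-\sfs|_1^{-1/2}$) for $|\sft-\sfs|_1\lesssim {\rm e}^{b_\epsilon}$, since the walk simply runs horizontally; summing $O(1)$ over the $\asymp{\rm e}^{b_\epsilon}$ strip sites at that distance from $\sfu$ produces precisely the factor ${\rm e}^{b_\epsilon}$ that the pinning reward ${\rm e}^{-2\chi'\beta}$ must beat. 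Declaring this contribution ``subdominant'' discards the dominant term.

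The second gap is the status of your ``core input''. A genuine two-sided asymptotic $\bbP_{\beta,+}^{\sfh}(\sfs,\sft)\asymp \dd_\sfn(\sfs)\dd_\sfn(\sft)/\lb\sigma_\perp^2|\sft-\sfs|_1^{3/2}\rb$, uniform in $\sfn$ and across the degenerate regime, is neither proved in Section~\ref{sec:A-D-RW} nor needed there, and proving it would be substantially harder than what the paper actually does. The paper works with deliberately non-matching one-sided bounds: a lower bound \eqref{eq:target-lb-uv} on the denominator, obtained by forcing $\ell_\sfu$ horizontal steps before climbing (note it carries the single factor $\ell_\sfu$, not a product of boundary distances), and an upper bound \eqref{eq:target-ub-wz} on the numerator factors via Alili--Doney, Cauchy--Schwarz and the ladder-height Lemmas~\ref{lem:Nplus}--\ref{lem:Nminus}, which carries $\dd_\sfn^2$ factors. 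The mismatch in the powers of $\dd_\sfn$ is absorbed exactly by the asymmetric weights ${\rm e}^{\pm\delta\beta\dd_\sfn}$ distinguishing $\underline{\bbP}$ from $\bar{\bbP}$ in \eqref{eq:RecQuant-P} --- the weights you discard at the outset. Discarding them is a valid inequality, but it commits you to matching upper and lower bounds on the bare $\bbP_{\beta,+}^{\sfh}$, which you do not have; keeping them is what lets the paper get by with the cruder, provable estimates. Either restore the weights and redo the bookkeeping as in Subsections~\ref{sub:lower}--\ref{sub:upper}, or supply a proof of the two-sided asymptotics, which your proposal defers to a section that does not contain it.
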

We prove Proposition~\ref{lem:Plus-bounds}  in Section \ref{sec:provaprop10}.
}

\subsection{Claim (B) of Theorem~\ref{thm:TwoBounds} and the lower bound in
  Theorem~\ref{thm:nopinning}}
\label{sec:lb}
First of all we may consider $\bbP_{\beta , +}^{\sfh_\sfx} (0, \sfx )$ instead of
the left hand side in \eqref{eq:TwoBounds2}. The proof of the following Proposition
is relegated to Subsection~\ref{sub:proofPx}:
\begin{proposition}
\label{prop:P-plus-bounds}
There exists $c_1 = c_1 (\beta )$ such that:
\be
\label{eq:ImplProp10}
\max\lbr
{\rm e}^{-\nu_{\sfg}\beta\abs{\sfx}_{{1}}},
\suptwo{\sfu, \sfx  - \sfv\in \calY} {\sfu,\sfv\in \calH_{+,\sfn}}
{\rm e}^{-\delta \beta\lb \abs{\sfu}_1 +\abs{\sfx -\sfv}_1\rb}
 \bbP_{\beta , +}^{\sfh_\sfx} (\sfu , \sfv )\rbr
  (\sfu , \sfv )
\leq c_1
\bbP_{\beta , +}^{\sfh_\sfx}  (0, \sfx ).
\ee
\end{proposition}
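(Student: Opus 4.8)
The plan is to compare the positive effective-walk kernels through a concatenation inequality and to feed in the half-space fluctuation estimates for the effective walk from Section~\ref{sec:A-D-RW}. Since $\bbP_\beta^{\sfh}$ is multiplicative along concatenations of strings of irreducible animals, the positivity constraint $S_1,\dots,S_{\ell-1}\ge0$ survives concatenation as soon as the junction points lie in $\calH_{+,\sfn}$, and a string carries nonzero weight only when its displacement lies in $\calY$, one obtains for every admissible pair the chain bound
\[
\bbP_{\beta,+}^{\sfh_\sfx}(0,\sfx)\ \ge\ \bbP_{\beta,+}^{\sfh_\sfx}(0,\sfu)\,\bbP_{\beta,+}^{\sfh_\sfx}(\sfu,\sfv)\,\bbP_{\beta,+}^{\sfh_\sfx}(\sfv,\sfx).
\]
In particular $\bbP_{\beta,+}^{\sfh_\sfx}(0,\sfx)$ is at least polynomially large in $|\sfx|_1$ (with a $\beta$-dependent constant), so the term ${\rm e}^{-\nu_{\sfg}\beta|\sfx|_1}$ of \eqref{eq:ImplProp10} is negligible, and the real content is the supremum over $\sfu,\sfv$.

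The key feature is that $\sfx\in\calB_{+,\sfn}$: by Theorem~\ref{thm:OZ} the tilt $\sfh_\sfx=\nabla\tau_\beta(\sfx)$ makes the drift of the effective walk collinear with $\sfx$, hence --- as $\dd_\sfn(\sfx)=1$ --- essentially orthogonal to $\sfn$, so the scalar walk $S_\ell=\sfR_\ell\cdot\sfn$ is centered, has exponential step tails by \eqref{eq:exp-tails}, and has an exponentially small but strictly positive variance $\sigma_\beta^2$. From Section~\ref{sec:A-D-RW} I would import matching two-sided estimates for $\bbP_{\beta,+}^{\sfh_\sfx}(\sfu,\sfv)$ and $\bbP_{\beta,+}^{\sfh_\sfx}(0,\sfx)$ of the mixed Gaussian/Poissonian type: each is, up to $\beta$-dependent constants, a product of an exponential weight equal to the Legendre defect $\tau_\beta(\sfv-\sfu)-\sfh_\sfx\cdot(\sfv-\sfu)$ (which vanishes precisely when $\arg(\sfv-\sfu)=\arg(\sfx)$), a polynomial directional prefactor, and a ballot factor in the depths $\dd_\sfn(\sfu),\dd_\sfn(\sfv)$. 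The ballot factor saturates once a depth exceeds the fluctuation scale $\sigma_\beta\sqrt n$ ($n$ the displacement): on short scales $n\lesssim\sigma_\beta^{-2}$ the wall is not felt and the decay is only $\sim n^{-1/2}$, while on long scales $n\gtrsim\sigma_\beta^{-2}$ it becomes the genuine ballot decay $\sim\sigma_\beta^{-2}n^{-3/2}$, the two matching at the crossover. When $\sfu,\sfv$ stay within a bounded distance of $\calB_{+,\sfn}$ the boundary factors $\bbP_{\beta,+}^{\sfh_\sfx}(0,\sfu)$, $\bbP_{\beta,+}^{\sfh_\sfx}(\sfv,\sfx)$ are bounded below by such polynomial-in-$n$ quantities, hence (as $\delta\beta$ is large, $t^{p}{\rm e}^{-\delta\beta t}$ being bounded) by $c(\beta){\rm e}^{-\delta\beta|\sfu|_1}$ and $c(\beta){\rm e}^{-\delta\beta|\sfx-\sfv|_1}$; plugging this into the chain bound disposes of that range.

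It remains to assemble the pieces, splitting on (i) whether $\dd_\sfn(\sfu)+\dd_\sfn(\sfv)$ exceeds a threshold $R$, (ii) whether $\arg(\sfv-\sfu)$ is within a fixed small angle of $\arg(\sfn^\perp)$, and (iii) where $|\sfx|_1$ sits relative to the crossover scale $\sigma_\beta^{-2}$. When $\sfv-\sfu$ points away from $\sfn^\perp$ the Legendre defect is at least $c\beta|\sfv-\sfu|_1$ with $c$ bounded below, and with the triangle inequality $|\sfu|_1+|\sfx-\sfv|_1+|\sfv-\sfu|_1\ge|\sfx|_1$ this yields ${\rm e}^{-\delta\beta(|\sfu|_1+|\sfx-\sfv|_1)}\bbP_{\beta,+}^{\sfh_\sfx}(\sfu,\sfv)\lesssim{\rm e}^{-\delta\beta|\sfx|_1}$, which beats every polynomial lower bound on $\bbP_{\beta,+}^{\sfh_\sfx}(0,\sfx)$. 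When $\sfv-\sfu$ is nearly aligned with $\sfn^\perp$ one uses instead the matching of the two-sided bounds for the middle factor and for $\bbP_{\beta,+}^{\sfh_\sfx}(0,\sfx)$, together with $|\sfu|_1+|\sfx-\sfv|_1\ge|\sfx|_1-|\sfv-\sfu|_1$; the only delicate point is that, $\sigma_\beta$ being exponentially small, the middle factor may already be a ``bridge'' ($\sim n^{-1/2}$) while $\bbP_{\beta,+}^{\sfh_\sfx}(0,\sfx)$ has entered the ballot regime, and the ratio is then controlled by a power of $R$ times $\sigma_\beta^2|\sfx|_1$, which stays bounded because that regime forces $|\sfx|_1\lesssim R^2\sigma_\beta^{-2}$, with $R$ fixed once and for all large relative to the variance-degeneracy rate divided by $\delta$.

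The genuinely hard part is not this bookkeeping but the input from Section~\ref{sec:A-D-RW}: establishing the sharp matching half-space estimates above for the \emph{degenerate} effective walk --- whose $\sfn$-directional variance is exponentially small, whose wall direction $\sfn$ is in general non-lattice, and whose short-scale behaviour is a mixture of a Gaussian part and a Poissonian ``random staircase'' built from right and up unit steps only --- and, in particular, the uniform-in-$(\sfn,\beta)$ upper bounds on the expected number of ladder heights obtained from the Alili--Doney identity \eqref{eq:AD-expBound} applied to just the three basic step types. The rest is a (lengthy) reorganization once those are in hand.
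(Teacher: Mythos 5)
Your concatenation inequality $\bbP_{\beta,+}^{\sfh_\sfx}(0,\sfx)\ge\bbP_{\beta,+}^{\sfh_\sfx}(0,\sfu)\,\bbP_{\beta,+}^{\sfh_\sfx}(\sfu,\sfv)\,\bbP_{\beta,+}^{\sfh_\sfx}(\sfv,\sfx)$ is in fact valid here (each irreducible step has a nonzero displacement in the salient cone $\calY$, so the effective walk can visit $\sfu$ and $\sfv$ at most once and the concatenation map is injective — a point you should state), and it is a different organization from the paper's: the actual proof in Subsection~\ref{sub:proofPx} never concatenates. It is a direct two-line comparison of the one-sided bounds already established in Subsections~\ref{sub:lower}--\ref{sub:upper}: the constructive lower bound \eqref{eq:target-lb-uv} taken at $(0,\sfx)$, which gives $\bbP_{\beta,+}^{\sfh_\sfx}(0,\sfx)\gtrsim {\rm e}^{-\delta\beta}\big/\big(|\sfx|_1\sqrt{|\sfx|_1{\rm e}^{-b_\sfx}}\big)$ and already disposes of the ${\rm e}^{-\nu_{\sfg}\beta|\sfx|_1}$ term, and the ladder-height/Cauchy--Schwarz upper bound \eqref{eq:target-ub-wz} for $\bbP_{\beta,+}^{\sfh_\sfx}(\sfu,\sfv)$ when $|\sfv-\sfu|_1\ge\tfrac12|\sfx|_1$, combined with the elementary inequality \eqref{eq:abs-vs-d} so that ${\rm e}^{-\delta\beta(|\sfu|_1+|\sfx-\sfv|_1)}$ absorbs the ${\rm e}^{\delta\beta(\dd_\sfn(\sfu)+\dd_\sfn(\sfv))}$ factors; the range $|\sfv-\sfu|_1<\tfrac12|\sfx|_1$ is killed by the exponential prefactor alone.

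The genuine gap is that the engine of your argument --- ``matching two-sided estimates of mixed Gaussian/Poissonian type'' for the \emph{constrained} kernels, with a Legendre defect, a directional prefactor and a saturating ballot factor in $\dd_\sfn(\sfu),\dd_\sfn(\sfv)$ --- is assumed rather than proved, and it is not what Section~\ref{sec:A-D-RW} supplies. That section gives the free-walk local asymptotics \eqref{eq:LB-P}, the Alili--Doney representation \eqref{eq:AD-expBound} and ladder-height moment bounds (Lemmas~\ref{lem:Nplus}, \ref{lem:Nminus}); the half-space bounds are assembled only afterwards and are deliberately one-sided and non-matching (a sliding-then-climbing lower bound involving $\ell_\sfn(\sfu)\wedge{\rm e}^{b_\epsilon}$ of \eqref{eq:lu} plus a curvature estimate for the mismatched tilt, and a Cauchy--Schwarz upper bound \eqref{eq:Holder-AD-OZ}). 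So your plan defers exactly the substance of the proof to an ``import'' that does not exist, and indeed demands more than is ever established: sharp two-sided ballot asymptotics, uniform in the exponentially degenerate variance, for non-lattice wall directions, and for the fixed tilt $\sfh_\sfx$ applied to off-axis displacements such as $(0,\sfu)$, would be harder than the route the paper takes. Concretely, your lower bound $\bbP_{\beta,+}^{\sfh_\sfx}(0,\sfu)\ge c(\beta){\rm e}^{-\delta\beta|\sfu|_1}$ for near-wall $\sfu$ is not a consequence of a ``ballot factor'': for tilted walls a rightward staircase exits $\calH_{+,\sfn}$ after $\ell_\sfn$ steps, and one also needs a defect/curvature estimate because $\sfh_\sfx$ is not $\nabla\tau_\beta(\sfu)$; and your crossover bookkeeping in case (iii) only covers bounded depths, the unbounded-depth range being settled by an unverified assertion that ${\rm e}^{-\delta\beta\dd_\sfn}$ beats a ratio that grows with the depth. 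The skeleton is reasonable, but as written the quantitative content of the proof is missing.
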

{
Thus, lower bounds
 for both $G_\beta \lb\sfx \big| \calP_{+ ,\sfn }\rb $ and
 $G_{\beta }^{+\sfn} (\sfx )$ may be derived in terms of
 $\bbP_{\beta , +}^{\sfh_\sfx}  (0, \sfx )$.

Given a string $\uGamma = \left[ \ugamma , \ucalC\right]$ of irreducible animals,
let us define (see \eqref{eq:w-weights})
\be
\label{eq:F-func}
F (\uGamma ) = F (\ugamma ) = \sum_\ell \abs{\gamma^{[\ell ]}} {\rm e}^{-\beta\chi (d_\ell +2 )}
= \sum_\ell \phi_\beta  (\gamma^{[\ell ]} ) .
\ee}
Both \eqref{eq:TwoBounds2}
and the lower bound in \eqref{eq:nopinning} are consequences of the
following proposition:
\begin{proposition}
 \label{prop:LowerBounds}
 For any $\beta\geq \beta_0$ there exist two constants $p_\beta >0$ and $K_\beta <\infty$
 such that the following two bounds hold  uniformly in ${\rm arg} (\sfn )\in
 \left[\frac{\pi}{2} ,\frac{3\pi}{4}\right]$ and $\sfx\in \calB_{+ , \sfn }$:
 \be
 \label{eq:ClaimB-cond}
\bbP_\beta^{\sfh_\sfx}
\lb \frA^{+, \sfn} (0, \sfx )~\Big|~
\frR^{+, \sfn} (0, \sfx )
 \rb  = \frac{\bbP_\beta^{\sfh_\sfx}\lb \frA^{+, \sfn} (0, \sfx )\rb}
 {\bbP_{\beta, +}^{\sfh_\sfx}\lb 0, \sfx \rb}
 \geq p_\beta ,
 \ee
 and,
 \be
 \label{eq:lb-exps}
 \bbE_\beta^{\sfh_\sfx}
\lb F (\ugamma )~\Big|~
\frR^{+, \sfn} (0, \sfx )
 \rb \leq K_\beta .
 \ee
\end{proposition}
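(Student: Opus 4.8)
\emph{Overall strategy.} The plan is to read both bounds as statements of entropic repulsion for the effective random walk $\big(S_\ell=\sfR_\ell\cdot\sfn\big)$ conditioned to stay non‑negative, following the scheme of \cite{CIL}. Set $\hat\bbP(\cdot)=\bbP_\beta^{\sfh_\sfx}\big(\,\cdot\mid\frR^{+,\sfn}(0,\sfx)\big)$, a genuine probability law on strings $\uGamma=[\ugamma,\ucalC]$ of irreducible animals with $\ugamma:0\mapsto\sfx$ and $S_1,\dots,S_{\ell-1}\ge 0$; then $\bbP_{\beta,+}^{\sfh_\sfx}(0,\sfx)$ is its normalization and $\bbP_\beta^{\sfh_\sfx}\big(\frA^{+,\sfn}(0,\sfx)\mid\frR^{+,\sfn}(0,\sfx)\big)=\hat\bbP(\ugamma\subset\calH_{+,\sfn})$. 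Conditionally on the walk path $(\sfR_0,\dots,\sfR_\ell)$ the animals $\Gm{1},\dots,\Gm{\ell}$ are independent, $\Gm{\ell}$ being an irreducible animal of prescribed displacement $\sfX_\ell=\sfR_\ell-\sfR_{\ell-1}$; moreover the diamond $D_\ell=D(\sfR_{\ell-1},\sfR_\ell)\supseteq\Gm{\ell}$ of \eqref{eq:D-shape} and the quantity $d_\ell$ of \eqref{eq:d-ell} depend on the path alone. Two quantitative inputs will be used: (I) the mass‑gap bound \eqref{eq:exp-tails}, whence $\bbP_\beta^{\sfh}(|\Gamma|\ge k)\le c\,{\rm e}^{-\nu_{\sfg}\beta k}$ and, after a large‑deviation correction, $\bbE_\beta^{\sfh}\big[|\Gamma|\bigm|\sfX(\Gamma)=\sfv\big]\le c(\beta)(1+\abs{\sfv}_1)$; and (II) fluctuation/occupation estimates for the conditioned walk, notably that its expected occupation of level $a$ satisfies $\hat\bbE\big[\#\{\ell:\dd_\sfn(\sfR_{\ell-1})=a\}\big]\le c(\beta)(a+1)$, i.e.\ the walk is repelled from the wall at the diffusive scale.

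\emph{Proof of \eqref{eq:ClaimB-cond}.} Since the cone $\calY$ of \eqref{eq:Ycone} has bounded opening, every vertex $\sfy$ of $\gm{\ell}$ obeys $\sfy\cdot\sfn\ge(S_{\ell-1}\wedge S_\ell)-C_1\abs{\sfX_\ell}_1$; hence, writing $q_\ell:=\bbP(\gm{\ell}\not\subset\calH_{+,\sfn}\mid\text{path})$, one has $q_\ell>0$ only if $S_{\ell-1}\wedge S_\ell<C_1\abs{\sfX_\ell}_1$, and by translation invariance along $\partial\calH_{+,\sfn}$ (finitely many local configurations), $q_\ell\le 1-\epsilon_0(\beta)$ whenever $\abs{\sfX_\ell}_1\le k_0(\beta)$. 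Thus $\hat\bbP(\ugamma\subset\calH_{+,\sfn})=\hat\bbE\big[\prod_\ell(1-q_\ell)\big]$, which I would bound below as follows: by (I)--(II), a step $\sfX_\ell$ of size $k$ with $q_\ell>0$ confines $\ell$ to an $O\big(k^2{\rm e}^{O(\beta)}\big)$‑window near an endpoint yet has probability $\le c\,{\rm e}^{-\nu_{\sfg}\beta k}$, so for $k_0(\beta)$ large the event ``every $\ell$ with $q_\ell>0$ has $\abs{\sfX_\ell}_1\le k_0(\beta)$'' has $\hat\bbP$‑probability $\ge\tfrac34$; likewise $\#\{\ell:q_\ell>0\}\le\#\{\ell:\dd_\sfn(\sfR_{\ell-1})\le C_1 k_0(\beta)\}\le N(\beta)$ with $\hat\bbP$‑probability $\ge\tfrac34$ (Markov plus the occupation bound). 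On the intersection $\prod_\ell(1-q_\ell)\ge\epsilon_0(\beta)^{N(\beta)}$, giving \eqref{eq:ClaimB-cond} with $p_\beta=\tfrac12\,\epsilon_0(\beta)^{N(\beta)}$, uniformly in $\sfn$ and in $\sfx\in\calB_{+,\sfn}$ with $\abs{\sfx}_1$ large.

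\emph{Proof of \eqref{eq:lb-exps} and packaging.} Recall $F(\ugamma)=\sum_\ell\abs{\gm{\ell}}\,{\rm e}^{-\chi\beta(2+d_\ell)}$. By \eqref{eq:alpha-ratio}, $d_\ell\ge\tfrac12\big((\dd_\sfn(\sfR_{\ell-1})-\alpha\abs{\sfX_\ell}_1)_++(\dd_\sfn(\sfR_\ell)-\alpha\abs{\sfX_\ell}_1)_+\big)$. I would split each summand on $\{\abs{\gm{\ell}}\le\dd_\sfn(\sfR_{\ell-1})/(2\alpha)+R_0\}$: on this event the term is dominated by a deterministic, exponentially decaying $g_\beta(\dd_\sfn(\sfR_{\ell-1}))$; on its complement $\abs{\gm{\ell}}>\dd_\sfn(\sfR_{\ell-1})/(2\alpha)+R_0$, and conditioning on the path and invoking (I) bounds its $\hat\bbE$‑contribution by $c(\beta)(1+\dd_\sfn(\sfR_{\ell-1}))^2{\rm e}^{-c'(\beta)\dd_\sfn(\sfR_{\ell-1})}$. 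Summing over $\ell$ and applying (II) yields $\hat\bbE\big[F(\ugamma)\big]\le\sum_{a\ge0}c(\beta)(a+1)\big[g_\beta(a)+c(\beta)(a+1)^2{\rm e}^{-c'(\beta)a}\big]=:K_\beta<\infty$, i.e.\ \eqref{eq:lb-exps}. Finally Proposition~\ref{prop:P-plus-bounds} reduces the left‑hand side of \eqref{eq:TwoBounds2} to $\bbP_{\beta,+}^{\sfh_\sfx}(0,\sfx)$, which by \eqref{eq:ClaimB-cond} is $\le p_\beta^{-1}\bbP_\beta^{\sfh_\sfx}\big(\frA^{+,\sfn}(0,\sfx)\big)=p_\beta^{-1}{\rm e}^{\tau_\beta(\sfx)}\!\sum_\ell\!\sum_{\uGamma\in\frA_\ell^{+,\sfn}(0,\sfx)}\!q(\uGamma)\le p_\beta^{-1}{\rm e}^{\tau_\beta(\sfx)}G_\beta(\sfx\mid\calP_{+,\sfn})$, which is Claim (B); and the lower bound in \eqref{eq:nopinning} follows from the lower bound in \eqref{eq:q+-crude}, the inequality $\sum_{\sfy\in\gamma}{\rm e}^{-\chi\beta(\dd_\sfn(\sfy)+1)}\le(\text{bounded contributions of }\gm{\sfl},\gm{\sfr})+F(\ugamma)$, Jensen, and \eqref{eq:ClaimB-cond}--\eqref{eq:lb-exps}.

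\emph{Main obstacle.} The crux is input (II): showing the conditioned effective walk is genuinely repelled from the wall and obeys the occupation bound \emph{despite} the variance of $Z_i=\sfX_i\cdot\sfn$ being exponentially small in $\beta$ when $\sfn$ is a lattice direction, the non‑Gaussian behaviour of $(S_\ell)$ on short scales, and the non‑lattice orientations of $\partial\calH_{+,\sfn}$ --- precisely the competition ``degeneracy of variance versus degeneracy of pinning''. This would be handled through the Alili--Doney identity \eqref{eq:AD-expBound} and the fluctuation analysis of Section~\ref{sec:A-D-RW} (the effective‑walk counterpart of the percolation‑cluster estimates of \cite{CIL}); all other steps are soft, with constants allowed to depend on $\beta$.
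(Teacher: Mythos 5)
Your proposal is correct in outline and, for \eqref{eq:ClaimB-cond}, takes a genuinely different route from the paper. The paper does not condition on the path and multiply out per-animal escape probabilities; instead it introduces the intermediate event $\frE^{+,\sfn}_k(0,\sfx)$ that all diamonds $D_\ell$, $k\le\ell\le m-k$, lie in $\calH_{+,\sfn}$ (on which the constraint \eqref{eq:gamma-cond} is automatic for the bulk animals), invokes a modification of Lemma~5.1 of \cite{CIL} to get $\bbP_\beta^{\sfh_\sfx}(\frE_k^{+,\sfn})\ge c\,\bbP_{\beta,+}^{\sfh_\sfx}(0,\sfx)$ for some $k=k(\beta)$, and then performs a resummation surgery on the first and last $k$ animals (restricting to $\abs{\sfu}_1,\abs{\sfx-\sfv}_1\le N_k$ and using positivity of $\bbP_\beta^{\sfh}(\frA^{+,\sfn}(0,\sfu))$, with an overcounting factor $c\,N_k^4$) to replace them by animals confined to the half-plane. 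Your product-over-near-wall-visits argument buys a more probabilistic picture, but it is not cheaper: your input (II) (the occupation bound for the conditioned walk) and your union bound over large steps near the wall are exactly the content of the two-sided estimates on $\bbP_{\beta,+}^{\sfh}$ of Sections~\ref{sec:A-D-RW}--\ref{sec:provaprop10} that underlie both \eqref{eq:k-choice} and Proposition~\ref{lem:Plus-bounds}, so you are deferring to the same hard machinery the paper defers to. For \eqref{eq:lb-exps} your argument is essentially the paper's: the paper writes $F$ as in \eqref{eq:F-form}, bounds $\phi_\beta\le{\rm e}^{\phi_\beta}-1$, and identifies the resulting sum with $a_\delta{\rm e}^{2\delta\beta}$ from \eqref{eq:a-delta}, so that finiteness is again Proposition~\ref{lem:Plus-bounds}.

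One step of yours needs real work and is stated too glibly: the claim that $q_\ell\le 1-\epsilon_0(\beta)$ whenever $\abs{\sfX_\ell}_1\le k_0(\beta)$. This requires exhibiting, for \emph{every} displacement achievable by an irreducible animal with $\abs{\sfX_\ell}_1\le k_0$ and \emph{every} admissible position of the two endpoints inside $\calH_{+,\sfn}$ (a continuum of relative positions when $\sfn$ is irrational, so not literally ``finitely many local configurations''), at least one irreducible animal with that displacement contained in the half-plane. This is true but not immediate: the cheapest irreducible paths for a given displacement may all exit $\calH_{+,\sfn}$ (e.g.\ for $\arg(\sfn)=3\pi/4$ and small displacements with both endpoints on the wall), so $\epsilon_0(\beta)$ can be exponentially small in $\beta$ and one must check nonemptiness of the in-half-plane class case by case. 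The paper's surgery via \eqref{eq:rk-bound+} and \eqref{eq:k-sum-bound-1} sidesteps this pointwise verification by only needing positivity of finitely many whole partition functions $\bbP_\beta^{\sfh}(\frA^{+,\sfn}(0,\sfu))$ with $\abs{\sfu}_1\le N_k$. If you supply that verification (or replace it by the paper's resummation), your argument goes through.
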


{
Before proving Proposition~\ref{prop:LowerBounds} let us demonstrate
how it implies lower bounds in question:

Consider first  \eqref{eq:TwoBounds2}.
{ By
\eqref{eq:ImplProp10}
it would be enough}
to check  that there exists
a constant $c_2 = c_2 (\beta )$ such that
\be
\label{eq:ClaimB}
\bbP_{\beta , +}^{\sfh_\sfx} (0 , \sfx ) \leq
c_2
\,
G_\beta \lb\sfx \big| \calP_{+ ,\sfn }\rb e^{\tau_\beta(\sfx)} .
\ee
However,
\[
 G_\beta \lb\sfx \big| \calP_{+ ,\sfn }\rb e^{\tau_\beta(\sfx)} \geq \bbP_\beta^{\sfh_\sfx}
\lb \frA^{+,\sfn} (0, \sfx ) \rb .
\]
{
Indeed, the right hand side above is just a restricted sum over  animals with empty boundary pieces
in the irreducible decomposition \eqref{eq:irreducible-A}. By \eqref{eq:ClaimB-cond}
\[
 \bbP_\beta^{\sfh_\sfx}
\lb \frA^{+,\sfn} (0, \sfx ) \rb \geq
p_\beta \bbP_\beta^{\sfh_\sfx}
\lb \frR^{+,\sfn} (0, \sfx ) \rb
=p_\beta \bbP_{\beta , +}^{\sfh_\sfx} (0 , \sfx ) ,
\]
and
\eqref{eq:ClaimB} follows.

Turning to the lower bound in \eqref{eq:nopinning} note that by \eqref{eq:q+-crude}
\[
{\rm e}^{\tau_\beta (\sfx )} G_{\beta}^{+, \sfn} (\sfx ) \geq
\bbE_\beta^{\sfh_\sfx}\lbr {\rm e}^{- F (\ugamma )}\1_{\frA^{+,\sfn} (0, \sfx )}\rbr .
\]
If both \eqref{eq:ClaimB-cond} and \eqref{eq:lb-exps} hold,
then by  Markov inequality,
\[
 \bbP_\beta^{\sfh_\sfx}\lb F (\ugamma )\leq \frac{2K_\beta}{p_\beta} ; \frA^{+,\sfn} (0, \sfx )
 ~\Big| \frR^{+,\sfn} (0, \sfx ) \rb \geq \frac{p_\beta}{2} .
\]
This means that
\be
\label{eq:lb-nopinning}
{\rm e}^{\tau_\beta (\sfx )} G_{\beta}^{+, \sfn} (\sfx ) \geq
\frac{p_\beta}{2} {\rm e}^{- \frac{2K_\beta}{p_\beta }}
\bbP_\beta^{\sfh_\sfx}\lb \frR^{+,\sfn} (0, \sfx ) \rb
= \frac{p_\beta}{2} {\rm e}^{- \frac{2K_\beta}{p_\beta }}
\bbP_{\beta , +}^{\sfh_\sfx}\lb 0, \sfx  \rb
,
\ee
On the other hand by  \eqref{eq:exp-tails}
\[
 G_\beta \lb\sfx \big| \calP_{+ ,\sfn }\rb e^{\tau_\beta(\sfx)} \leq
 \sum_{\sfu , \sfv}
 {\rm e}^{-\nu_{\sfg}\beta\lb \abs{\sfu}_{{1}} +\abs{\sfx -\sfv}_{{1}}\rb}
 \bbP_{\beta , +}^{\sfh_\sfx} (\sfu , \sfv ) + \smof{{\rm e}^{-\nu_{\sfg}\beta\abs{\sfx}_{{1}}}} .
\]
In view of Proposition~\ref{prop:P-plus-bounds} (and \eqref{eq:ClaimB}) we conclude that
$G_\beta \lb\sfx \big| \calP_{+ ,\sfn }\rb e^{\tau_\beta(\sfx)}\eqvs
\bbP_{\beta , +}^{\sfh_\sfx} (0, \sfx ) $, and the lower bound \eqref{eq:nopinning}
indeed follows from \eqref{eq:lb-nopinning}. \qed
}
}

\begin{proof}[Proof of Proposition~\ref{prop:LowerBounds}]
The bound \eqref{eq:ClaimB-cond} has a transparent meaning:
it reflects entropic repulsion of the random walk $\lbr \sfR_\ell\rbr$ from $\calH_{+, \sfn}^c$
under  the conditional measures $\bbP_\beta^{\sfh_\sfx}
\lb \cdot ~\Big|~
\frR^{+, \sfn} (0, \sfx )
 \rb$. Recall
\eqref{eq:irreducible-A}
that {both events $\frA^{+, \sfn} (0, \sfx )$ and  $\frR^{+, \sfn} (0, \sfx )$}
are encoded in terms of words of irreducible animals
\be
\label{eq:Strings}
\uGamma = \Gm{1}\circ \dots \circ \Gm{m} ;\ \sfX (\uGamma )= \sfx ;\  m=1, 2, \dots
\ee
The event $\frR^{+, \sfn} (0, \sfx )$ contains all such words in \eqref{eq:Strings} for which
all the vertices $\sfR_\ell$ of the effective random walk \eqref{eq:Rwalk} belong to
$\calH_{+ ,\sfn}$. The event $\frA^{+, \sfn} (0, \sfx ) \subset \frR^{+, \sfn} (0, \sfx )$
is more restrictive: it requires that for any $\ell = 1, \dots , m$,
\be
\label{eq:gamma-cond}
\sfR_{\ell-1} + \gm{\ell}\subset \calH_{+ ,\sfn} .
\ee
Note that \eqref{eq:gamma-cond} is automatically satisfied whenever
(see \eqref{eq:D-shape})  $D_\ell \subset \calH_{+,\sfn }$.
For $k=1,2,\dots $
{
consider the following event:
}
%let us
%define two events $\frE^{+ ,\sfn}_{k , i} (0, \sfx )$; $i=1,2$. The first one is
\be
\label{frE-event1}
\frE^{+ ,\sfn}_{k} (0, \sfx ) = \bigcup_{m}\lb \frR^{+, \sfn}_m (0, \sfx )\cap
\lbr D_\ell\subset \calH_{+ ,\sfn}\ {\rm for}\ \ell = k, \dots , m-k\rbr\rb  .
\ee
By definition $\lbr D_\ell\subset \calH_{+ ,\sfn}\ {\rm for}\ \ell = k, \dots , m-k\rbr$
is a sure event whenever $k > m-k$.

A straightforward (and substantially simplified) modification
of the proof of Lemma~5.1 in \cite{CIL} implies that for all
$\beta $ sufficiently large
 there exists $k = k (\beta )$ such that
\be
\label{eq:k-choice}
\inf_{\sfx\in \calB_{+,\sfn }}\frac{
\bbP_\beta^{\sfh_\sfx}\lb \frE^{+ ,\sfn}_{k }(0, \sfx )\rb} {\bbP_{\beta, +}^{\sfh_\sfx }\lb 0, \sfx \rb}  > 0 .
\ee
Let us fix such $k$.
Consider the identity
{
\be
\label{eq:C-k-1sum}
 \bbP_\beta^\sfh \lb \frE^{+ ,\sfn}_{k }(0, \sfx )\rb
  =
 \sum_{\sfu , \sfv} \bbP_{\beta,+}^\sfh\lb \sfR_k = \sfu \rb
 \bbP_{\beta }^\sfh \lb \frA^{+ ,\sfn}( \sfu , \sfv )\rb
 \bbP_{\beta,+}^\sfh\lb \sfR_k = \sfx - \sfv \rb .
\ee
}
{
Since, $\bbP_{\beta }^\sfh \lb \frA^{+ ,\sfn}( \sfu , \sfv )\rb
\leq \bbP_{\beta , +}^\sfh ( \sfu , \sfv )$,
{
using
exponential tail estimates \eqref{eq:exp-tails} to control
$\bbP_\beta^\sfh\lb \sfR_k = \sfu \rb$ and $\bbP_\beta^\sfh\lb \sfR_k = \sfx - \sfv \rb$
 and
 \eqref{eq:ImplProp10},
}
 one infers that
 there exists $N_k (\beta )$ such that all the terms in
 \eqref{eq:C-k-1sum} which violate
 \be
 \label{eq:Rk-range}
 \abs{\sfu}_1,\, \abs{\sfx-\sfv}_1 < N_k
 \ee
 might be ignored.
 Precisely,  there exists $c_3 = c_3  (\beta ) > 0$, such that
 \be
 \label{eq:k-sum-bound}
 \sumtwo{\sfu , \sfv}{\abs{\sfu}_1, \abs{\sfx-\sfv}_1\leq N_k}
 \bbP_{\beta,+}^\sfh\lb \sfR_k = \sfu \rb
 \bbP_{\beta }^\sfh \lb \frA^{+ ,\sfn}( \sfu , \sfv )\rb
 \bbP_{\beta,+}^\sfh\lb \sfR_k = \sfx - \sfv \rb
 \geq c_3  \bbP_{\beta, +}^\sfh\lb 0, \sfx \rb ,
 \ee
 uniformly in $\sfx\in\calB_{+, \sfn}$ large.
}  On the other hand,
{
\be
\label{eq:rk-bound+}
\bbP_\beta^\sfh\lb \frA^{+, \sfn} (0, \sfu )\rb \ \text{and}\
\bbP_\beta^\sfh\lb \frA^{+, \sfn} (\sfv, \sfx )\rb
> 0,
\ee
for any $\sfu\in \calH_{+, \sfn}\cap\calY$ and $\sfv\in\calH_{+, \sfn}\cap\lb \sfx - \calY\rb$.
Hence, by \eqref{eq:k-sum-bound} there exists $c_4 (\beta ) >0$ such that
\be
 \label{eq:k-sum-bound-1}
 \sumtwo{\sfu , \sfv}{\abs{\sfu}_1, \abs{\sfx-\sfv}_1\leq N_k}
 %\bbP_\beta^\sfh\lb \sfR_k = \sfu \rb
 \bbP_\beta^\sfh\lb \frA^{+, \sfn} (0, \sfu )\rb
 \bbP_{\beta }^\sfh \lb \frA^{+ ,\sfn}( \sfu , \sfv )\rb
 %\bbP_\beta^\sfh\lb \sfR_k = \sfx - \sfv \rb
 \bbP_\beta^\sfh\lb \frA^{+, \sfn} (\sfv, \sfx )\rb
 \geq c_4  \bbP_{\beta, +}^\sfh\lb 0, \sfx \rb ,
 \ee
 uniformly in $\sfx\in\calB_{+, \sfn}$ large.
  Each term in $\bbP_{\beta }^\sfh \lb \frA^{+ ,\sfn}( 0 , \sfx )\rb $
 is overcounted at most $c_5 N_k^4$ times on the left hand side of \eqref{eq:k-sum-bound-1}.
 The inequality  \eqref{eq:ClaimB-cond} follows.
}
{
Let us turn to \eqref{eq:lb-exps}. Rewrite
\be
\label{eq:F-form}
F (\ugamma ) = \sum_\ell\sum_{\sfw , \sfz}  \phi_\beta (\gamma^{[\ell]} )
\1_{\lbr \sfR_{\ell -1} = \sfw , \sfR_\ell =\sfz\rbr }.
\ee
Hence,
\[
\bbE_\beta^{\sfh_\sfx }\lb F (\ugamma )\1_{\frR^{+ ,\sfn} (0, \sfx )}\rb =
\sum_{\sfw , \sfz}\bbP_{\beta , +}^{\sfh} (0, \sfw )\lb \bbE_\beta^\sfh
\1_{\sfA (\sfw , \sfz )}\phi_\beta (\gamma )\rb
\bbP_{\beta , +}^{\sfh} (\sfz, \sfx ).
\]
Since $\phi_\beta \leq {\rm e}^{\phi_\beta} - 1 $, a comparison with \eqref{eq:Psi-delta}
and with the right hand side of \eqref{eq:a-delta}  reveals that
\be
\label{eq:K-adelta}
\bbE_\beta^{\sfh_\sfx}
\lb F (\ugamma )~\Big|~
\frR^{+, \sfn} (0, \sfx )
 \rb \leq a_\delta
 {{\rm e}^{2\delta \beta} \df K_\beta ,
} \ee
}
{so our claim will follow once we prove Proposition~\ref{lem:Plus-bounds}
in Section \ref{sec:provaprop10}.}
\end{proof}

\section{Fluctuation  and Alili-Doney estimates}
\label{sec:A-D-RW}
Recall that in order to complete the proof of Theorem~\ref{thm:nopinning} it
remains  to verify the claims of  Proposition~\ref{lem:Plus-bounds} and
Proposition~\ref{prop:P-plus-bounds}.

At this stage we need to take a closer look at the local properties of the effective  walk
$\sfR_k$ defined in \eqref{eq:Rwalk}. In the sequel we shall restrict attention to
${\rm arg} (\sfx ) \in [0,2\pi /5]$. We shall represent
$\sfx = \abs{\sfx}_1\lb 1-\epsilon , \epsilon\rb$ and, accordingly, write
$\sfh_{\sfx}=\sfh_\epsilon$. If  ${\rm arg} (\sfx ) \in [0,2\pi /5]$
than the effective random walk has three
basic steps \eqref{eq:BasicProb}.
The rest of the steps satisfy \eqref{eq:non-basic}. This assertion is explained in
 Subsection~\ref{sub:steps}. Furthermore, sharp asymptotic description of $a_\epsilon$
 and $b_\epsilon$ are formulated in Proposition~\ref{prop:aep-bep}.

 Subsection~\ref{sub:Rk} is devoted to the proof of uniform local asymptotics
 of Proposition~\ref{prop:LB-P}. Note that
 \eqref{eq:LB-P} is valid
on all scales (sizes of $\sfx$) and as such goes beyond usual asymptotic
form of the local CLT. For instance,
if $\epsilon$ is small (horizontal or almost horizontal wall) and
if ${\rm e}^{-b_\epsilon}\abs{\sfx}_1 \leq 1$, then
the statistics of steps $\sfy\neq \sfe_1$
of the effective random walk from $0$ to $\sfx$
follow Poissonian asymptotics (as
$\beta\to\infty$). Gaussian asymptotics start to carry over only when
${\rm e}^{-b_\epsilon}\abs{\sfx}_1 \gg 1$, and there is an intermediate range of values
of $\abs{\sfx}_1$ when one should interpolate between these two regimes. In
Subsection~\ref{sub:Rk} we introduce a representation \eqref{eq:Rk-repr} of the
effective random walk $\sfR_k$ which makes this heuristics mathematically tractable:
The first term in \eqref{eq:Rk-repr} is a random staircase, whereas the second term
is a diluted
random sum of (uniformly - see Lemma~\ref{lem:V-walk} where this is quantified)
non-degenerate $\calY$-valued random variables.

In Subsection~\ref{sub:AD} we derive crucial
bounds on $\bbP_{\beta , +}^\sfh \lb \sfw , \sfz \rb$
for effective random walks which
are constrained stay above the wall. In view of decomposition
\eqref{eq:Ruv-decomp} one needs to study quantities
$\bbP_{\beta , +}^\sfh \lb 0 , \sfv  \rb$ and $\hat\bbP_{\beta , +}^\sfh \lb \sfv , 0  \rb$,
see the definition \eqref{eq:Heights-vrsRplus} in terms of ladder variables.
At this stage we rely on the adjustment \cite{CIL} of the Alili-Doney \cite{AD}
representation
formulas \eqref{eq:AD-expBound}.

We use \eqref{eq:AD-expBound} for deriving lower bounds in Subsection~\ref{sub:lower}.
The rest of Subsection~\ref{sub:AD} is devoted to upper bounds which are based on
H\"{o}lder inequalities  \eqref{eq:Holder-AD-OZ} and
\eqref{eq:Holder-AD-OZ-d}  (in Section~\ref{sec:provaprop10} it will be enough to
use Cauchy-Schwarz). The required bounds on expected number of ladder heights are derived
in Lemmas \ref{lem:Np-k}-\ref{lem:Nminus}.

\subsection{Low temperature structure of $\pKb$, $\tau_\beta$ and $\bbP_\beta^{\sfh }$.}
\label{sub:steps}
Recall the notation:
$\sfh = \sfh_{\sfx} = \nabla\tau_\beta (\sfx )$. Probability measures
$\bbP_\beta^{\sfh}$ are
defined on the very same set of irreducible animals $\frA$, regardless of our
running choice
of $\sfn$ and $\sfx$ and, accordingly, of $\sfh$.

Consider two
elementary irreducible animals $\Gamma_i = [\sfe_i , \emptyset]$; $i=1,2$. For
$\sfh = (h_1 , h_2 )\in \calQ_+\cap\pKb$ their $\bbP_\beta^{\sfh }$-probabilities
are given by:
\[
 \bbP_\beta^{\sfh} (\Gamma_i ) = {\rm e}^{-\beta +h_i },
\]
which means that $0\leq h_i \leq \beta$. For $\sfx \in \calQ_+$ and $\sfh = \sfh_\sfx$
it would be therefore convenient to define $a_\sfx (\beta ) = \beta - h_1$ and
$b_\sfx (\beta ) = \beta - h_2$. By the above,
\be
\label{eq:ax-bx}
0\leq a_\sfx , b_\sfx
\leq \beta. \ee
In this notation the probabilities of $\Gamma_i$ are recorded as
\be
\label{eq:ProbG1G2}
\bbP_\beta^{\sfh_\sfx} (\Gamma_1 ) = {\rm e}^{- a_\sfx (\beta ) }\quad\text{and}
\quad
\bbP_\beta^{\sfh_\sfx} (\Gamma_2) = {\rm e}^{- b_\sfx (\beta ) }.
\ee
We are going to derive asymptotic description
of $\bbP_\beta^{\sfh_\sfx} \lb 0, \sfx \rb =
\sum_\ell
\bbP_\beta^{\sfh}\lb
\frA_\ell (0, \sfx )
\rb$.
In order to formulate it  we shall employ the following %convenient
asymptotic notation:
\begin{definition}
 \label{def:eqvs}
 Let us say that two sequences $\lbr\phi (n)\rbr$ and $\lbr\psi (n)\rbr$ satisfy
 $\phi (n )\eqvs \psi (n)$ {\em uniformly} in $n\in A$ if there exists a positive
 constant $c\geq 1$ such that
 \[
  \frac{1}{c}\phi (n ) \leq \psi (n) \leq c\phi (n )
 \]
for all $n\in A\subseteq \bbN$. The same convention applies for notation $\phi (\sfx )\eqvs
\psi (\sfx )$ uniformly in $\sfx \in A\subseteq \bbZ^2$.
\end{definition}
The principal result of the forthcoming  Subsection~\ref{sub:Rk} is:
\begin{proposition}
\label{prop:LB-P}
The following asymptotic relation holds uniformly
in $\beta$ large and ${\rm arg}\lb \sfx\rb \in [0, \frac{2\pi}{5}]$:
\be
\label{eq:LB-P}
\bbP_\beta^{\sfh_\sfx} \lb 0, \sfx \rb
{
\df
\bbP_\beta^{\sfh_\sfx}\lb
\frA  (0, \sfx )
\rb} =
\sum_\ell
\bbP_\beta^{\sfh_\sfx}\lb
\frA_\ell (0, \sfx )
\rb
 \eqvs \frac{1}{\sqrt{{\rm e}^{-b_\sfx (\beta ) }\abs{\sfx}_1}\vee 1} ,
\ee
\end{proposition}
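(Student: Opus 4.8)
Writing $\sfh = \sfh_\sfx$, the plan is to read the left‑hand side of \eqref{eq:LB-P} as the renewal Green function
\[
\bbP_\beta^{\sfh}\lb \frA (0,\sfx )\rb = \sum_{\ell\geq 1}\bbP_\beta^{\sfh}\lb \sfR_\ell = \sfx\rb ,
\]
where $\sfR_\ell = \sum_{i=1}^\ell\sfX_i$ and $\sfX_i$ are the displacements of i.i.d.\ irreducible animals sampled from $\bbP_\beta^\sfh$, with drift $\sfv^\ast = \alpha\sfx$ collinear with $\sfx$; in particular $\bbE_\beta^\sfh [\sfR_{\ell^\ast}] = \sfx$ for $\ell^\ast = 1/\alpha\eqvs\abs{\sfx}_1$. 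First we would reduce the range of $\ell$: by the mass‑gap bound \eqref{eq:exp-tails} the increments have uniform‑in‑$\beta$ exponential tails with $\abs{\sfX_i}_1\geq 1$, so a Cram\'er‑type estimate for $\sfR_\ell$ shows that $\ell\notin [\underline c\abs{\sfx}_1 , \overline c\abs{\sfx}_1]$ contributes at most $c(\beta )\,{\rm e}^{-c'\beta\abs{\sfx}_1}$, which is negligible against the right‑hand side of \eqref{eq:LB-P}. Hence it suffices to estimate $\sum_{\ell\eqvs\abs{\sfx}_1}\bbP_\beta^\sfh (\sfR_\ell = \sfx )$.

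The next step is to feed in the structure of the walk. By \eqref{eq:BasicProb}, \eqref{eq:non-basic} and the decomposition \eqref{eq:Rk-repr}, $\sfR_\ell$ splits as a random staircase built from the unit steps $\sfe_1 , \sfe_2$ plus a diluted sum of uniformly (in $\beta$) non‑degenerate $\calY$‑valued steps, each increment being $\sfe_1$ with probability $1-\Theta ({\rm e}^{-b_\sfx})$ and ``active'' otherwise with probability $\eqvs {\rm e}^{-b_\sfx}$; by Proposition~\ref{prop:aep-bep}, ${\rm e}^{-b_\sfx}\eqvs \epsilon\vee {\rm e}^{-\beta}$ where $\sfx/\abs{\sfx}_1 = (1-\epsilon , \epsilon )$. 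Consequently the increment covariance of the effective walk equals ${\rm e}^{-b_\sfx}$ times a uniformly non‑degenerate matrix, and, since the tilt $\sfh_\sfx$ centres the walk at $\sfx$ at time $\ell^\ast$, a two‑dimensional local limit theorem should yield
\[
\bbP_\beta^\sfh (\sfR_\ell = \sfx )\eqvs \frac{1}{\lb {\rm e}^{-b_\sfx}\abs{\sfx}_1\rb\vee 1}
\]
for all $\ell$ with $\abs{\ell - \ell^\ast}\lesssim \sqrt{{\rm e}^{-b_\sfx}\abs{\sfx}_1}\vee 1$, and negligible values for the remaining $\ell$; summing over the $O\lb \sqrt{{\rm e}^{-b_\sfx}\abs{\sfx}_1}\vee 1\rb$ relevant $\ell$ then reproduces \eqref{eq:LB-P}. (The walk is aperiodic, since it charges both $\sfe_1$ and $\sfe_2$, so no lattice correction is needed.)

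For the upper bound we would prove the one‑sided local estimate $\bbP_\beta^\sfh (\sfR_\ell = \sfx )\lesssim 1/\lb ({\rm e}^{-b_\sfx}\abs{\sfx}_1 )\vee 1\rb$ for each $\ell\eqvs\abs{\sfx}_1$ by conditioning on the number and positions of the active steps and convolving a uniform binomial/Poisson local estimate for the staircase skeleton with a standard local CLT for the non‑degenerate $\calY$‑valued part of \eqref{eq:Rk-repr}, then summing over $\ell$. For the lower bound we would restrict the renewal sum to words using only the three basic steps, with a near‑typical number $\Theta ({\rm e}^{-b_\sfx}\abs{\sfx}_1 )$ of active letters arranged so as to land at $\sfx$; the Ornstein--Zernike normalisation $\sum_\Gamma\bbP_\beta^\sfh (\Gamma ) = 1$ turns this into a multinomial probability evaluated near its mode, which a Stirling estimate uniform in the parameters bounds below by $1/\lb \sqrt{{\rm e}^{-b_\sfx}\abs{\sfx}_1}\vee 1\rb$. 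In the regime ${\rm e}^{-b_\sfx}\abs{\sfx}_1\lesssim 1$ both bounds collapse to $\eqvs 1$: only $\ell$ within $O(1)$ of $\abs{\sfx}_1$ contribute, each extra non‑staircase letter costs a factor $\Theta ({\rm e}^{-b_\sfx})$, and the resulting geometric series is $\Theta (1)$.

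The hard part will be the uniformity. Because the increment covariance of the effective walk degenerates like ${\rm e}^{-b_\sfx}\to 0$ as $\beta\to\infty$ for nearly horizontal directions, one cannot simply invoke the Gaussian local CLT; the whole analysis must be run through a local limit theorem for binomial/Poisson‑type sums that is uniform in its parameters $(n,p)$, so as to interpolate seamlessly between the Poissonian regime ${\rm e}^{-b_\sfx}\abs{\sfx}_1\lesssim 1$ and the Gaussian regime ${\rm e}^{-b_\sfx}\abs{\sfx}_1\gg 1$. The decomposition \eqref{eq:Rk-repr} is precisely what makes this tractable, provided one verifies that its $\calY$‑valued component is uniformly non‑degenerate in $\beta$; and the sharp asymptotics of $a_\epsilon , b_\epsilon$ in Proposition~\ref{prop:aep-bep} are needed to identify the activity level ${\rm e}^{-b_\sfx}\eqvs \epsilon\vee {\rm e}^{-\beta}$ that appears in the final estimate.
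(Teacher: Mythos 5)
Your overall architecture is the paper's: read the left--hand side as a renewal Green function, use the decomposition \eqref{eq:Rk-repr} into a staircase skeleton plus a diluted non-degenerate $\calY$-valued part, and interpolate between Poissonian and Gaussian local estimates via Stirling-type bounds that are uniform in the parameters. However, there is a concrete error in the key intermediate claim. The increment covariance of the effective walk is \emph{not} ${\rm e}^{-b_\sfx}$ times a uniformly non-degenerate matrix. The two steps $\sfe_1,\sfe_2$ alone span a rank-one fluctuation (after $k$ such steps the walk is confined to the line $x+y=k$); genuine two-dimensional spreading comes only from the step $4\sfe_1-\sfe_2$ and the non-basic steps, whose total probability is $\eqvs {\rm e}^{-\beta-\Delta_\sfx}$ by \eqref{eq:BasicProb}--\eqref{eq:non-basic}, and this is much smaller than ${\rm e}^{-b_\sfx}\eqvs\epsilon$ as soon as $\epsilon\gg{\rm e}^{-\beta}$ (one eigenvalue of $\Sigma/{\rm e}^{-b_\sfx}$ is of order ${\rm e}^{-2\beta+2b_\sfx-4a_\sfx}$, which is not bounded below in $\beta$). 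Consequently your fixed-$\ell$ estimate $\bbP_\beta^\sfh(\sfR_\ell=\sfx)\eqvs 1/\lb({\rm e}^{-b_\sfx}\abs{\sfx}_1)\vee 1\rb$ is wrong in that regime --- the correct order is $1/\lb\abs{\sfx}_1\sqrt{{\rm e}^{-b_\sfx}\,{\rm e}^{-\beta-\Delta_\sfx}}\rb$ --- and so is the claimed width $\sqrt{{\rm e}^{-b_\sfx}\abs{\sfx}_1}$ of the window of contributing $\ell$, which is really $\sqrt{{\rm e}^{-\beta-\Delta_\sfx}\abs{\sfx}_1}\vee 1$, since for a fixed endpoint the fluctuation of the step count is driven by the rare long steps. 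The two errors cancel in the product (the Green function only sees the variance transverse to the drift), which is why your final answer is still \eqref{eq:LB-P}; but the ``two-dimensional local limit theorem'' you invoke per fixed $\ell$ would fail as stated, precisely on the uniformity issue you yourself identify as the hard part. The same oversight appears in your aperiodicity remark: charging $\sfe_1$ and $\sfe_2$ does not remove the lattice degeneracy at fixed $\ell$.

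The fix is what the paper actually does: choose the dilution parameters $\alpha_\sfx^i$ in \eqref{eq:Rk-repr} differently according to whether $\epsilon\lesssim{\rm e}^{-\beta}$ or not (\opt{1} vs.\ \opt{2}), so that the $\sfV$-component absorbs all of the $4\sfe_1-\sfe_2$ mass and is uniformly non-degenerate in the sense of \eqref{eq:Vi-vert}, while the $\sfU$-staircase carries the dominant, essentially one-dimensional, binomial fluctuation of size $\sqrt{{\rm e}^{-b_\sfx}\abs{\sfx}_1}$. One then convolves Stirling estimates for the negative-binomial count $N_\ell$ and for $\sfR^\sfU_\ell$ with the coarse local bound of Lemma~\ref{lem:V-walk} for the $\sfV$-walk, rather than applying a single isotropic LLT to the full increment. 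With that correction your proof becomes the paper's.
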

The proof of \eqref{eq:LB-P} is based on a careful analysis of asymptotics of $a_\sfx$ and
$b_\sfx$, particularly for $\sfx$-s close to the horizontal axis. Since
$\sfh_\sfx = \nabla\tau_\beta (\sfx )$ depends only on the direction of $\sfx$, it would be convenient
to consider $|\cdot|_1$-normalized versions  of various  $\sfx$ with
${\rm arg}\lb \sfx\rb \in [0, \frac{2\pi}{5}]$, or more generally of $\sfx\in\calQ_+$.
%which could be recorded as $\lb 1-\epsilon , \epsilon \rb$ for $\epsilon \in [0,1]$. Accordingly,
Below, if $\frac{\sfx}{|\sfx|_1} = (1-\epsilon , \epsilon )$,
we
shall use
notation $a_\epsilon \df a_\sfx $ and $b_\epsilon \df b_\sfx$.

\noindent
Here is the main result of the current subsection:
\begin{proposition}
 \label{prop:aep-bep}
 The following asymptotic relations hold uniformly in ${\rm
   arg}\lb \sfx\rb\in [0,
 \frac{2\pi}{5}]$
and
 $\beta$ sufficiently large:
\be
\label{eq:aeps}
c_1 \max\lbr \epsilon ,  {\rm e}^{-\beta}\rbr \leq a_\epsilon \leq
c_2\max\lbr \epsilon ,  {\rm e}^{-\beta} \rbr ,
\ee
As far as asymptotics of $b_\epsilon$ are considered:
%Set $u_\epsilon \df  \beta - b_\epsilon$. \blue{  This was $h_2$ before.} \newline
 If
$\epsilon \geq{\rm e}^{-2\beta}$, then
\be
\label{eq:uep-cases}
\begin{cases}
 \beta - b_\epsilon  \in \left[ \beta +\log\epsilon +
 %\bigof{
 c_3{\rm e}^{-\beta}
 %}
 ,
 \beta +\log\epsilon + \frac{c_4}{{\epsilon {\rm e}^{2\beta}}}\right] ,
 \quad &\text{if $
{
 \epsilon\geq 2{\rm e}^{-\beta}
}
 $}\\
\beta -b_\epsilon \eqvs \epsilon{\rm e}^{\beta}, \quad &\text{if ${\rm e}^{-2\beta}\leq
 \epsilon< 2{\rm e}^{-\beta}$
} \end{cases}
.
\ee
{If $\epsilon <  {\rm e}^{-2\beta}$ }, then
\be
\label{eq:uep-upper}
 0\leq
 %u_\epsilon =
 \beta - b_\epsilon
\leq c_5
%\sqrt{
{ {\rm e}^{-\beta}}\qquad   .
\ee
 \end{proposition}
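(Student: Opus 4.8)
The plan is to extract $a_\epsilon,b_\epsilon$ from the two defining properties of $\sfh=\sfh_\epsilon=\nabla\tau_\beta(\sfx)$, $\sfx=(1-\epsilon,\epsilon)$, supplied by Theorem~\ref{thm:OZ}: that $\bbP_\beta^{\sfh}$ is a probability measure on $\sfA$, so $\sum_{\Gamma\in\sfA}{\rm e}^{\sfh\cdot\sfX(\Gamma)}q(\Gamma)=1$, and that $\bbE_{\bbP_\beta^{\sfh}}[\sfX]$ is collinear with $(1-\epsilon,\epsilon)$. Isolating the two elementary animals $\Gamma_1=[\sfe_1,\emptyset]$, $\Gamma_2=[\sfe_2,\emptyset]$, for which $\bbP_\beta^\sfh(\Gamma_1)={\rm e}^{-a_\epsilon}$ and $\bbP_\beta^\sfh(\Gamma_2)={\rm e}^{-b_\epsilon}$, these two identities become
\[
{\rm e}^{-a_\epsilon}+{\rm e}^{-b_\epsilon}+R=1
\qquad\text{and}\qquad
\frac{{\rm e}^{-b_\epsilon}+R_2}{{\rm e}^{-a_\epsilon}+R_1}=\frac{\epsilon}{1-\epsilon},
\]
where $R=\sum_{\Gamma\neq\Gamma_1,\Gamma_2}{\rm e}^{\sfh\cdot\sfX(\Gamma)}q(\Gamma)$ and $R_j=\sum_{\Gamma\neq\Gamma_1,\Gamma_2}(\sfX(\Gamma)\cdot\sfe_j)\,{\rm e}^{\sfh\cdot\sfX(\Gamma)}q(\Gamma)$ gather the non-elementary irreducible animals. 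The a priori bounds $0\le a_\epsilon,b_\epsilon\le\beta$ come for free from $\sfh\in\calQ_+$ and $\bbP_\beta^\sfh(\Gamma_i)\le1$; everything then reduces to solving this system once $R,R_1,R_2$ are controlled as functions of $(a_\epsilon,b_\epsilon,\beta)$.

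The analytic heart is the control of $R,R_1,R_2$. Using $\abs{\sfX(\Gamma)}_1\le\abs{\gamma}$, $0\le h_i\le\beta$, the mass-gap bound \eqref{eq:exp-tails} and the cluster decay \eqref{07} with $\chi>1$, one first gets a crude uniform bound $R,\abs{R_1},\abs{R_2}\le c\,{\rm e}^{-\eta\beta}$ for some fixed $\eta>0$, enough to start the bootstrap. The sharp version relies on the simple structure of low-temperature Ising polymers: since $\sfX(\Gamma)\in\calY$ for every $\Gamma\in\sfA$ and because of the south-west splitting rule, the smallest irreducible animals are easily classified. There are no naked irreducible animals of contour length two; the contour-length-three ones form a short explicit list, each of $\bbP_\beta^\sfh$-weight $O({\rm e}^{-2\beta})$ uniformly; and -- the key point for near-horizontal walls, where the vertical drift must be tiny -- the minimal downward excursion $\sfe_1^2(-\sfe_2)\sfe_1^2$ (displacement $(4,-1)$, contour length five) is genuinely irreducible and fits in the corresponding diamond, with $\bbP_\beta^\sfh$-weight ${\rm e}^{-2\beta-4a_\epsilon+b_\epsilon}$, as does its diagonal reflection. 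Everything else, decorated or not, is smaller by a further factor ${\rm e}^{-\Omega(\beta)}$, uniformly. Hence $R={\rm e}^{-2\beta-4a_\epsilon+b_\epsilon}(1+o(1))+O({\rm e}^{-2\beta})$, $R_1=4\,{\rm e}^{-2\beta-4a_\epsilon+b_\epsilon}(1+o(1))+O({\rm e}^{-2\beta})$, $R_2=-{\rm e}^{-2\beta-4a_\epsilon+b_\epsilon}(1+o(1))+O({\rm e}^{-2\beta})$. I expect this classification, and especially the matching \emph{lower} bounds, to be the main obstacle: one must verify that $\sfe_1^2(-\sfe_2)\sfe_1^2$ is break-point-free, that it is the unique leading source of negative vertical displacement, and one must track precisely how its weight degenerates as $\beta\to\infty$; this is one of the two places where the restriction to four basic contour steps is essential.

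Granting this, the asymptotics follow by algebra on the two equations. For $a_\epsilon$ (i.e.\ \eqref{eq:aeps}): the first equation and $b_\epsilon\le\beta$ give ${\rm e}^{-a_\epsilon}\le 1-{\rm e}^{-b_\epsilon}\le 1-{\rm e}^{-\beta}$, hence $a_\epsilon\ge-\log(1-{\rm e}^{-\beta})\ge{\rm e}^{-\beta}$; the second equation with the bounds on $R_1,R_2$ gives ${\rm e}^{-b_\epsilon}\le\tfrac{\epsilon}{1-\epsilon}(1+o(1))+O({\rm e}^{-\beta})$, and then $a_\epsilon=-\log(1-{\rm e}^{-b_\epsilon}-R)\eqvs{\rm e}^{-b_\epsilon}+{\rm e}^{-\beta}\eqvs\max\{\epsilon,{\rm e}^{-\beta}\}$ uniformly -- for ${\rm arg}(\sfx)\in[0,2\pi/5]$ the quantity $\max\{\epsilon,{\rm e}^{-\beta}\}$ stays bounded away from $1$. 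For $b_\epsilon$: write $v={\rm e}^{-b_\epsilon}$, so $\beta-b_\epsilon=\beta+\log v$. Substituting ${\rm e}^{-a_\epsilon}=1-v-R$ into the second equation and inserting the expansions of $R,R_1,R_2$ (note $Rv=\Theta({\rm e}^{-2\beta})+O({\rm e}^{-2\beta}v)$) turns it, after clearing denominators, into a quadratic of the form $v^2=\epsilon v+\Theta({\rm e}^{-2\beta})(1+O(\epsilon))$, with positive root $v=\tfrac12(\epsilon+\sqrt{\epsilon^2+\Theta({\rm e}^{-2\beta})})$. When $\epsilon\ge 2{\rm e}^{-\beta}$ the discriminant is dominated by $\epsilon^2$, so $v=\epsilon+\Theta({\rm e}^{-2\beta}/\epsilon)$ and $\beta-b_\epsilon=\beta+\log\epsilon+\log(v/\epsilon)$ lands in the window $[\beta+\log\epsilon+c_3{\rm e}^{-\beta},\ \beta+\log\epsilon+c_4{\rm e}^{-2\beta}/\epsilon]$, the endpoints read off from the refined $o(1)$ and error terms; this is \eqref{eq:uep-cases}, first line. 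When $\epsilon<2{\rm e}^{-\beta}$ the $\log\epsilon$-expansion of the surface tension near $\epsilon=0$ fails, and one argues directly from the quadratic: its constant term ${\rm e}^{-2\beta}$ is now comparable to, or dominates, $\epsilon v$, so $v\asymp{\rm e}^{-\beta}$, whence $0\le\beta-b_\epsilon=\beta+\log v\le c_5{\rm e}^{-\beta}$, i.e.\ \eqref{eq:uep-upper}; separating ${\rm e}^{-2\beta}\le\epsilon<2{\rm e}^{-\beta}$ (where $\epsilon v\asymp{\rm e}^{-2\beta}$, so $\beta-b_\epsilon\eqvs\epsilon{\rm e}^{\beta}$) from $\epsilon<{\rm e}^{-2\beta}$ (constant term dominant) gives \eqref{eq:uep-cases}, second line. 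The two inputs on $v$ used here are ${\rm e}^{-b_\epsilon}\ge{\rm e}^{-\beta}$ from $b_\epsilon\le\beta$ and ${\rm e}^{-b_\epsilon}\le\tfrac{\epsilon}{1-\epsilon}+O({\rm e}^{-\beta})$ from the second equation via $R_2$, together with $v^2\le\epsilon v+O({\rm e}^{-2\beta})$.
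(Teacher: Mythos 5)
Your proposal follows essentially the same route as the paper's proof: isolate the elementary animals $\Gamma_1=[\sfe_1,\emptyset]$, $\Gamma_2=[\sfe_2,\emptyset]$ and the staircase with displacement $(4,-1)$ (weight ${\rm e}^{-2\beta-4a_\epsilon+b_\epsilon}$ — exactly the paper's $\Gamma_3$), push all other irreducible animals into an $O({\rm e}^{-2\beta})$ error via the mass gap, and solve the resulting normalization/collinearity system, the only cosmetic differences being that you get the upper bound on $a_\epsilon$ from the normalization identity rather than from the horizontal coordinate of the mean, and that you extract $b_\epsilon$ from an explicit quadratic in $v={\rm e}^{-b_\epsilon}$ where the paper works directly with ${\rm e}^{\beta-b_\epsilon}-{\rm e}^{-\Delta_\epsilon}$. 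One spot is too quick: for $\epsilon<{\rm e}^{-2\beta}$ the inference ``$v\asymp{\rm e}^{-\beta}$, whence $\beta+\log v\le c_5{\rm e}^{-\beta}$'' is not valid as stated (an equivalence up to constants only gives $\beta+\log v=O(1)$); you need the sharper expansion ${\rm e}^{\beta}v=1+O({\rm e}^{-\beta})$, which your quadratic does deliver once you insert $a_\epsilon=O({\rm e}^{-\beta})$ and an explicit $O({\rm e}^{-\beta})$ rate in the $(1+o(1))$ factor of $R$ — this is exactly the role played in the paper by the linearization ${\rm e}^{\beta-b_\epsilon}-{\rm e}^{-\Delta_\epsilon}\eqvs(\beta-b_\epsilon)+a_\epsilon$ in the small-$\epsilon$ regime.
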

\begin{proof}[Proof of Proposition~\ref{prop:aep-bep}] Let us start with
considerations which apply for all $\sfx\in\calQ_+$, or, equivalently, for any $\epsilon\in [0,1]$.
As it was already noticed in  \eqref{eq:ax-bx}, $0\leq a_\epsilon, b_\epsilon \leq \beta$. By convexity
and axis symmetries of the Wulff shape, $a_\epsilon$ is non-increasing in $\epsilon$, whereas $b_\epsilon$
is non-decreasing.
\newline
Next, by \eqref{eq:exp-tails} there exists $R>0$ such that uniformly in
$\beta$ large,
\[
\sum_{\Gamma\in\frA} \bbP_\beta^{\sfh_\sfx} (\Gamma ) \abs{\sfX (\Gamma )}^2
\1_{\lbr \abs{\sfX (\Gamma )} >R\rbr}
= \bigof{{\rm e}^{-2\beta }} .
\]
The sum $\sum_{\Gamma_i\in\frA} \bbP_\beta^{\sfh_\sfx} (\Gamma _i) =1.$
The contribution to it from all irreducible animals $\Gamma = [\gamma , \ucalC ]$ with
$\abs{\sfX (\gamma )}\leq R$ and non-empty decoration $\ucalC $ is
$\bigof{{\rm e}^{-{2}\beta }}$. It remains to consider the contributions of
irreducible paths $\gamma$
with {empty decorations and} $\abs{\sfX (\gamma )}\leq R$. The $\bbP_\beta^{\sfh_\sfx }$
probabilities
of the latter are given by
\[
 \bbP_\beta^{\sfh_\sfx }\lb [\gamma , \emptyset ]\rb = {\rm e}^{-\beta\abs{\gamma}
 +\sfh_\sfx \cdot \sfX (\gamma )} .
\]
By \eqref{eq:ax-bx} any  path $\gamma$  which contains a backtrack, that
is either both $\pm\sfe_1$ steps or both $\pm\sfe_2$ steps contributes
at most $\bigof{{\rm e}^{-2\beta }}$. Paths which contain only forward $\sfe_1$ and
$\sfe_2$ steps  and have at least two bonds   are reducible.
 Paths
which contain at least two backward steps from $\lbr -\sfe_1 ,-\sfe_2\rbr$ also
contribute at most $\bigof{{\rm e}^{-2\beta }}$. There are only two
staircase paths left (see Figure~\ref{fig:Animals} in Section~\ref{sec:A-D-RW}) :
\[
 \gamma_3 =
 \lb\sfe_1 , 2\sfe_1, 2\sfe_1 - \sfe_2 , 3\sfe_1 - \sfe_2, 4\sfe_1 - \sfe_2\rb\
 {\rm and}\
 \gamma_4 =
 \lb\sfe_2 , 2\sfe_2, 2\sfe_2 - \sfe_1 , 3\sfe_2 - \sfe_1, 4\sfe_2 - \sfe_1\rb .
\]
%%%%%%%%%%%%%%%%%%%%%%%%%%%%%%%%%
\begin{figure}[t]
\begin{center}
\includegraphics[width=0.9\textwidth]{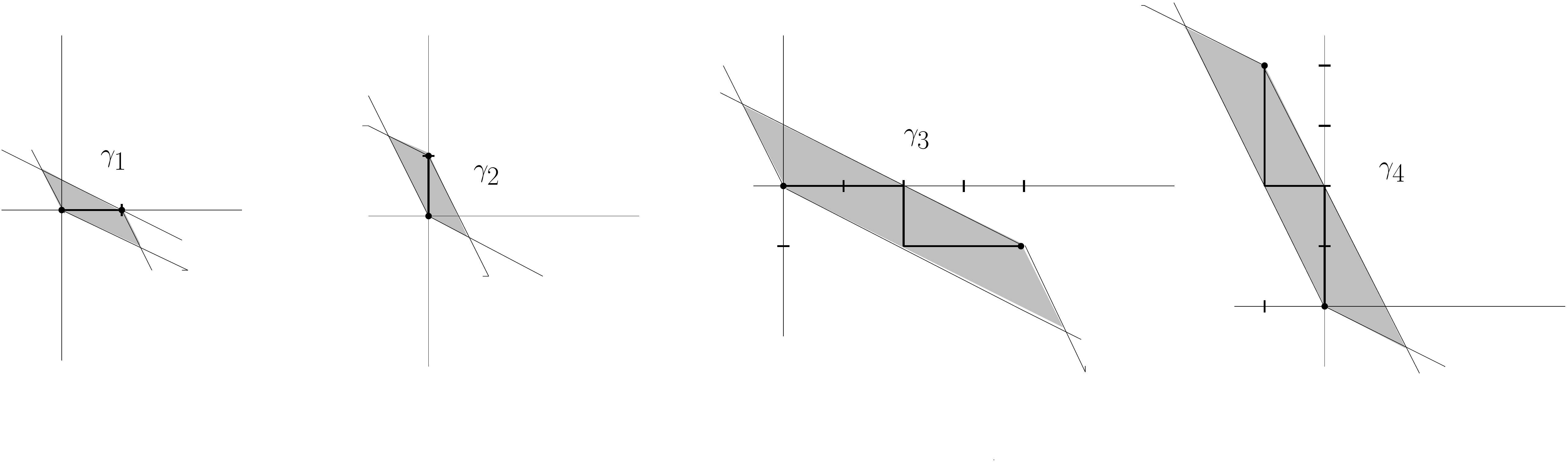}
\end{center}
\caption{Irreducible animals
$\Gamma_1, \Gamma_2, \Gamma_3, \Gamma_4$; $\Gamma_i = [\gamma_i , \emptyset ]$.}
\label{fig:Animals}
\end{figure}
%%%%%%%%%%%%%%%%%%%%%%%%%%%%%%%%
Define
\be
\label{eq:Delta-x-ab}
\Delta^a_\epsilon (\beta ) = 4a_\epsilon + (\beta - b_\epsilon )\ {\rm and}\
\Delta^b_\epsilon (\beta ) = 4b_\epsilon + (\beta - a_\epsilon )
\ee
By \eqref{eq:ax-bx}, both $\Delta^a_\epsilon, \Delta^b_\epsilon \geq 0$.
For $i=3,4$, the probabilities of $\Gamma_i = [\gamma_i , \emptyset]$
are given by
\be
\label{eq:ProbG3G4}
\bbP_\beta^{\sfh_\sfx} (\Gamma_3 ) = {\rm e}^{- \beta - \Delta^a_\epsilon (\beta )} \ {\rm and}\
\bbP_\beta^{\sfh_\sfx} (\Gamma_4 ) = {\rm e}^{- \beta - \Delta^b_\epsilon (\beta )} .
\ee
We conclude:
\be
\label{eq:G1-4Pcobtr}
{\rm e}^{-a_\epsilon} + {\rm e}^{-b_\epsilon} + {\rm e}^{- \beta - \Delta^a_ \epsilon(\beta )} + {\rm e}^{- \beta - \Delta^b_\epsilon (\beta )} =
1 -\bigof{{\rm e}^{-2\beta}}
\ee
and
\be
\label{eq:G1-4Ecobtr}
\begin{split}
&\lb {\rm e}^{-a_\epsilon} + 4{\rm e}^{- \beta - \Delta^a_\epsilon (\beta )} -
{\rm e}^{- \beta - \Delta^b_\epsilon(\beta )} ,
{\rm e}^{-b_\epsilon} + 4{\rm e}^{- \beta - \Delta^b_\epsilon (\beta )} -
{\rm e}^{- \beta - \Delta^a_\epsilon (\beta ) }\rb \\
&\quad = \bbE_\beta^{\sfh_\sfx} \sfX (\Gamma ) +\bigof{{\rm e}^{-2\beta}} \df
\sfv^*_\epsilon  (\beta ) +\bigof{{\rm e}^{-2\beta}}.
\end{split}
\ee
Recall that $\sfv^*_\epsilon = \abs{\sfv^*_\epsilon }_1 (1-\epsilon , \epsilon )$. By
\eqref{eq:G1-4Pcobtr} and \eqref{eq:G1-4Ecobtr}, $\abs{\sfv^*_\epsilon }_1 \geq 1$.
\smallskip

From now on let us consider
{
$\arg \lb (1-\epsilon , \epsilon ) \rb \in [0, \frac{2\pi}{5}]$.
}
In this case {the inspection of the first coordinate of the vector} \eqref{eq:G1-4Ecobtr} (for the horizontal component) readily implies that $a_\epsilon\leq c_5$
uniformly in $\beta$ large.
and, setting $\Delta_\epsilon = \Delta_\epsilon^a (\beta)= 
4a_\epsilon + (\beta - b_\epsilon )
$
(see \eqref{eq:Delta-x-ab}), we conclude:
Uniformly in
{
$\arg \lb (1-\epsilon , \epsilon ) \rb \in [0, \frac{2\pi}{5}]$.
}
and $\beta$ large,
\be
\label{eq:G1-New}
\begin{split}
&{\rm e}^{-a_\epsilon} + {\rm e}^{-b_\epsilon} + {\rm e}^{- \beta - \Delta_\epsilon} = 1 - \bigof{{\rm e}^{-2\beta}}\\ &\qquad \text{and}\\
&\lb {\rm e}^{-a_\epsilon} + 4{\rm e}^{- \beta - \Delta_\epsilon  },
{\rm e}^{-b_\epsilon}  -
{\rm e}^{- \beta - \Delta_\epsilon  }\rb = \abs{\sfv^*_\epsilon  }_1 \lb 1-\epsilon , \epsilon\rb
+\bigof{{\rm e}^{-2\beta}}.
\end{split}
\ee
\paragraph{\bf {Proof of \eqref{eq:aeps}}}
By \eqref{eq:ax-bx}, $b_\epsilon \leq \beta$, the first of
 \eqref{eq:G1-New} implies that
 $a_\epsilon \geq c_6{\rm e }^{-\beta}$ for any
{
$\epsilon$ in question.
}
 %$\epsilon\in [0,\sin\frac{2\pi}{5}]$.
 Next,
 since by both of \eqref{eq:G1-New},
 \be
 \label{eq:sv-star}
 \abs{\sfv^*_\epsilon }_1 =1 + \bigof{{\rm e}^{-\beta -\Delta_\epsilon}}
 ,
 \ee
 the second of \eqref{eq:G1-New} (for the horizontal coordinate)  implies that $a_\epsilon \eqvs \epsilon$,
uniformly in
$\epsilon\geq c_7 {\rm e}^{-\beta}$. Since $a_\epsilon$ is monotone non-decreasing in
$\epsilon$,
{ this implies that
$a_\epsilon \leq c_8 {\rm e}^{-\beta}$ for all $\epsilon\leq  c_7 {\rm e}^{-\beta}$, and
}
the first claim \eqref{eq:aeps} of Proposition~\ref{prop:aep-bep}  follows.
\smallskip

\noindent
\paragraph{\bf {Proof of \eqref{eq:uep-cases} and \eqref{eq:uep-upper}}}
Consider now the second of \eqref{eq:G1-New} (for the vertical coordinate). In view of
\eqref{eq:sv-star}, and after multiplying both sides by ${\rm e}^{\beta}$,
it reads (recall that $\Delta_\epsilon = 4a_\epsilon +
(\beta -b_\epsilon ) > \beta -b_\epsilon
$):
\be
\label{eq:u-eps}
{\rm e}^{(\beta -b_\epsilon )} - {\rm e}^{- (\beta -b_\epsilon )} \leq
{\rm e}^{(\beta -b_\epsilon )} - {\rm e}^{-\Delta_\epsilon} =
\epsilon{\rm e}^\beta + \bigof{\epsilon
{\rm e}^{-\Delta_\epsilon} +{\rm e}^{-\beta}} \leq
{\rm e}^{(\beta -b_\epsilon )}.
\ee
If $\epsilon {\rm e}^{\beta}\geq 2$, then $\bigof{\epsilon
{\rm e}^{-\Delta_\epsilon} +{\rm e}^{-\beta}}/(\epsilon {\rm e}^\beta) =
\bigof{ {\frac{1}{\epsilon {\rm e}^{2\beta }}}}$.
Hence, the first of \eqref{eq:uep-cases}.
\newline
Furthermore, since $
\beta -b_\epsilon$ is non-{increasing} and
non-negative, and since $a_\epsilon$ is non-negative and uniformly bounded,
\[
 {\rm e}^{
 \beta -b_\epsilon
} - {\rm e}^{-\Delta_\epsilon} = {\rm e}^{
 %u_\epsilon
 \beta -b_\epsilon}
 - {\rm e}^{-
 %u_\epsilon
 (\beta -b_\epsilon ) - 4a_\epsilon}
 \eqvs
 %u_\epsilon
 (\beta -b_\epsilon ) + a_\epsilon ,
\]
uniformly in $\epsilon\in [0, 2{\rm e}^{-\beta}]$ and $\beta$ large. Hence,
by \eqref{eq:u-eps},
\be
\label{eq:u-eps-small}
%u_\epsilon
(\beta -b_\epsilon)  + a_\epsilon \eqvs \epsilon{\rm e}^\beta + \bigof{\epsilon
{\rm e}^{-\Delta_\epsilon} +{\rm e}^{-\beta}} ,
\ee
also uniformly in $\epsilon\in [0, 2{\rm e}^{-\beta}]$ and $\beta$ large. The
asymptotic behavior of $a_\epsilon$ is already verified \eqref{eq:aeps}. Both the second
of \eqref{eq:uep-cases} and the upper bound \eqref{eq:uep-upper} follow.
\end{proof}
 \begin{remark}
 Consider $\sfx = \abs{\sfx}_1 \lb 1-\epsilon , \epsilon\rb$,
Since $\tau_\beta (\sfx ) = \sfh_\sfx\cdot\sfx = \beta\abs{\sfx }_1 - \lb a_\sfx , b_\sfx\rb\cdot
\sfx$, the asymptotics of surface tension $\tau_\beta$ are given by:
\be
\label{eq:tau-approx}
0\leq   \abs{\sfx}_1 -
\frac{\tau_\beta (\sfx )}{\beta}
 =
{\abs{\sfx}_1}\frac{(1-\epsilon)a_\epsilon + \epsilon b_\epsilon}{\beta} ,
\ee
where $a_\epsilon$ and $b_\epsilon
$ comply with asymptotic relations
 \eqref{eq:aeps}, \eqref{eq:uep-cases} and \eqref{eq:uep-upper}
uniformly in $\beta$ large and
$\sfx\in \calQ_+\cap\lbr {{\rm arg} (\sfx ) \in [0,2\pi /5]}\rbr$.
{
In particular,
the rescaled Wulff shape  $(1/\beta) \Kb$ tends to the square $Q=[-1,+1]^2$ in
Hausdorff distance, as $\beta\to\infty$, and the boundary of
$(1/\beta) \Kb$ is at
Hausdorff distance $\bigof{\frac{1}{\beta}}$ from $\partial Q$.
Sharper asymptotics could be read from Proposition~\ref{prop:aep-bep}, in particular
the boundary of $(1/\beta) \Kb$ is within distance
$\bigof{\frac{{\rm e}^{-\beta}}{\beta}}$ from $\partial Q$ along  axis directions.
}
\end{remark}

\subsection{Decomposition of $\sfR_k$ and proof of Proposition~\ref{prop:LB-P}.}
\label{sub:Rk}
The effective random walk $\sfR_k$ was defined in \eqref{eq:Rwalk}.
{
We  summarize  computations of Subsection~\ref{sub:steps} as follows:
\begin{definition}
Define the  set of basic steps as $\frS_0 = \lbr \sfe_1,\sfe_2,
4\sfe_1 - \sfe_2\rbr$.
\end{definition}
The probabilities of three basic steps
are given by
\be
\label{eq:BasicProb}
%\begin{split}
\bbP_\beta^{\sfh_{\sfx}}\lb
\sfX = \sfe_1
%\Gamma_1
\rb = {\rm e}^{-a_\sfx (\beta )},\
\bbP_\beta^{\sfh_{\sfx}}\lb
\sfX = \sfe_2
%\Gamma_2
\rb = {\rm e}^{-b_\sfx  (\beta ) }\
{\rm and}\
\bbP_\beta^{\sfh_{\sfx}}\lb
\sfX = 4\sfe_1 -\sfe_2
%\Gamma_3
\rb
= {\rm e}^{-\beta - \Delta_\sfx (\beta )}.
\ee
The coefficients $a_\sfx, b_\sfx$ and $\Delta_\sfx = 4a_\sfx +(\beta -b_\sfx)$ satisfy asymptotic
relations \eqref{eq:aeps}, \eqref{eq:uep-cases} and \eqref{eq:uep-upper}.
\newline
Non-basic steps
do not contribute in the following sense:
\be
\label{eq:non-basic}
\sup_{{\rm arg} (\sfx )\in [0,2\pi /5]}
 \sum_{\sfy\not\in\frS_0}\abs{\sfy}^2 \bbP_\beta^{\sfh_{\sfx}}\lb \sfX = \sfy \rb =
\bigof{ {\rm e}^{-2\beta}}  .
\ee
\begin{remark}
\label{eq:remGamma3}
Note that for fixed $\epsilon > 0$
{ and  for any $\sfx$ with $\frac{\sfx}{\abs{\sfx}_1} = (1-\epsilon, \epsilon )$}
the probability $\bbP_\beta^{\sfh_\sfx} (\sfX = 4\sfe_1 -\sfe_2)$ is
(asymptotically in $\beta$) of order ${\rm e}^{-2\beta}$.
However, for $\epsilon$-s of
order ${\rm e}^{-\beta}$ the probability of $\bbP_\beta^{\sfh_\sfx} ( \sfX = 4\sfe_1 -\sfe_2 )$ is comparable to
$\bbP_\beta^{\sfh_\sfx} (\sfX = \sfe_2 )$. For the sake of a
unified exposition we always include
 $4\sfe_1 -\sfe_2 $ to the set of basic steps   $\frS_0$.
\end{remark}
}
Recall our notation $ \sfv^* (\beta , \sfx ) =
\sfv^* (\beta , \sfh_\sfx ) =
\bbE_\beta^{\sfh_{\sfx}} \lb \sfX (\Gamma ) \rb$
{
for
the mean displacement over an
irreducible animal
sampled from $\lb \sfA , \bbP_\beta^{\sfh_\sfx} \rb$.
}
%By \eqref{eq:BasicProb}
By \eqref{eq:G1-4Ecobtr}   and  \eqref{eq:non-basic},
\be
\label{eq:vstar}
%\begin{split}
 \sfv^* (\beta , \sfx )
= \lb {\rm e}^{-a_\sfx (\beta )} , {\rm e}^{- b_\sfx (\beta )}\rb +
{\rm e}^{-\beta -\Delta_\sfx (\beta )}\lb 4, -1 \rb +
\bigof{{\rm e}^{-2\beta }} .
%\end{split}
\ee
By Theorem~\ref{thm:OZ},   $\sfv^* (\beta , \sfx )$ points in the direction of  $\sfx$,
in other words  there exists $\ell_\sfx \in \bbR_+$ such that
\be
\label{eq:ell-x}
\sfx = \ell_\sfx
\sfv^* (\beta , \sfx ) ,
\ee
writing $\sfx = \abs{\sfx }_1\lb 1-\epsilon , \epsilon \rb$,
we, in view of the first of \eqref{eq:G1-New},  conclude that
\be
\label{eq:ell-x-vrsx}
%\abs{
\ell_\sfx
=
%-
\abs{\sfx}_1
\lb 1 +
\bigof{
 %\leqs \ell_\sfx
{\rm e}^{- \lb \beta +\Delta_\sfx (\beta )\rb} }\rb ,
\ee
and, as a consequence, that
\be
\label{eq:expectation}
{
\left|  \lb {\rm e}^{-a_{\sfx }(\beta ) }+ 4{\rm e}^{-\beta - \Delta_\sfx (\beta )} ,
{\rm e}^{- b_{\sfx}(\beta )} -{\rm e}^{-\beta - \Delta_\sfx (\beta )}\rb  -
%\frac{\sfx}{\ \|\sfx\|_1}
(1- \epsilon , \epsilon  ) \right|_{{1}} \leq c {\rm e}^{-  \beta -  \Delta_\sfx (\beta)} ,
}
\ee
uniformly in ${\rm arg} (\sfx ) \in [0,\frac{2\pi}{ 5}]$ and $\beta$ large.

\paragraph{\bf Decomposition of $\sfR_k$.}
We shall always represent random walk $\sfR_k$ as
\be
\label{eq:Rk-repr}
\sfR_k = \sum_{1}^{k}\lb\xi_i \sfU_i + (1-\xi_i )\sfV_i\rb ,
\ee
where
\begin{enumerate}
 \item $\lbr \xi_i\rbr$ is  a  sequence
of i.i.d. Bernoulli random variables with probability of success  $\bbP (\xi_i  =1 )= q$;
\be
\label{eq:q-choice}
q = \alpha_\sfx^1{\rm e}^{-a_\sfx (\beta )} + \alpha_\sfx^2 {\rm e}^{-b_\sfx (\beta )}  .
\ee
There are two different choices of $\alpha_\sfx^i \in [0,1]$, according to
the direction $\sfx$, as described in \opt{1} and \opt{2} below.
In both cases, however, $q$ in \eqref{eq:q-choice} will satisfy:
\be
\label{eq:q-bound}
{
1-q \eqvs  {\rm e}^{- (\beta +\Delta_\sfx )} .
}
\ee
\item $\lbr \sfU_i\rbr$ is an
independent (from $\lbr \xi_i\rbr$)  sequence of
 i.i.d random vectors which take values $(1,0)$ and $(0,1 )$ with
probabilities
\be
\label{eq:p-choice}
 1- p = \bbP \lb \sfU_i = (1,0 )\rb =
 \frac{{\alpha_\sfx^1}{\rm e}^{- a_\sfx (\beta )}}
{ \alpha_\sfx^1 {\rm e}^{-a_\sfx (\beta )} +\alpha_\sfx^2 {\rm e}^{-b_\sfx (\beta )}} ,
\ee
and $\bbP \lb \sfU_i = (0,1 )\rb = p$, respectively.
%, $1-p$. Let  Note that
\item
$\lbr \sfV_i\rbr$ is  yet another independent (from $\lbr \xi_i\rbr$ and
$\lbr \sfU_i\rbr$)
 sequence of i.i.d.
random vectors with
{
\be
\label{eq:Vi}
\begin{split}
&\bbP \lb \sfV_i = \sfe_1\rb = \frac{(1-\alpha_\sfx^1 ){\rm e}^{-a_\sfx}}{1-q}, \quad
\bbP \lb \sfV_i = \sfe_2 \rb = \frac{(1-\alpha_\sfx^2 ){\rm e}^{-b_\sfx}}{1-q}\\
&\qquad \text{and, for $\sfy\neq \sfe_1, \sfe_2 $,}\
\bbP\lb \sfV_i = \sfy \rb = \frac{1}{1-q}
  \bbP_\beta^\sfh (\sfX (\Gamma ) = \sfy) .
\end{split}
\ee
}
\end{enumerate}
{
By \eqref{eq:exp-tails}
for any choice of
$\alpha_\sfx^i\in [0,1]$ as above
}
the distribution of $\sfV_i$  has exponential tails:
%two properties:
$\exists r_0, \nu_0 >0\ \text{such that}$
\be
\label{eq:p-U-1}
 \bbP\lb \abs{\sfV_i} >r \rb
 {\leq
}  {\rm e}^{-\beta\nu_0 r} \
\text{uniformly in $r\geq r_0$, $\sfx$ and $\beta$ large}.
\ee
In addition,
{
we shall choose $\alpha_\sfx^i\in [0,1]$ in such a way that}
the distribution of $\sfV_i$ will be  uniformly non-degenerate in
the following sense:
{
There exist $\delta_1 , \delta_2 \in (0,1 )$  such that
\be
\label{eq:Vi-vert}
\delta_1 \leq \min
{
\lbr
\bbP \lb \sfV_i = \sfe_2\rb, \bbP \lb \sfV_i = 4\sfe_1 - \sfe_2\rb
\rbr
}
%\cdot \sfe_2 = \pm 1\rb  \leq \max
%\bbP \lb \sfV_i\cdot \sfe_2 = \pm 1\rb  \leq 1-\delta
\ {\rm and}\
\bbE \sfV\cdot \sfe_1 \in [\delta_2 ,\delta_2^{-1}] .
\ee
uniformly in
{
${\rm arg} (\sfx ) \in [0,\frac{2\pi}{ 5}]$
}
and $\beta$ large.
}

Let $N_\ell$ be the number of failures (zeros)  of $\lbr \xi_i\rbr$ until the $\ell$-th success,
and let $\sfR^\sfU$ and $\sfR^\sfV$ be the random walks with steps
$\lbr \sfU_i\rbr$ and $\lbr \sfV_i\rbr$.
Then,
\be
\label{eq:decompUV-sum}
\begin{split}
& \sum_{\ell ,m}\bbP (N_\ell =m )\sum_{\sfy} \bbP (\sfR_\ell^U = \sfy ) \bbP (\sfR^V_m = \sfx -\sfy )\\
&\quad  \leq
\bbP_\beta^\sfh \lb 0,  \sfx \rb
{
 \leq c_2
\sum_{\ell ,m}\bbP (N_\ell =m )\sum_{\sfy} \bbP (\sfR_\ell^U = \sfy ) \bbP (\sfR^V_m = \sfx -\sfy ) .
}
\end{split}
\ee
{
Indeed, the difference between $\bbP_\beta^\sfh \lb 0,  \sfx \rb $ and the l.h.s. sum in
\eqref{eq:decompUV-sum} is that the former takes into account all possible superpositions
of steps of $\sfU$ and $\sfV$ walks, whereas the latter ignores the situation
when $\sfx$ is hit by a $\sfV$-step. The upper bound in \eqref{eq:decompUV-sum} 
 follows from
\eqref{eq:q-bound}.
}
\medskip

\begin{proof}[Proof of Proposition~\ref{prop:LB-P}]
We now turn to an analysis of \eqref{eq:decompUV-sum}.
It would be helpful to  remember that by \eqref{eq:ell-x} the running scale $\ell_\sfx$ satisfies:
\be
\label{eq:lx-scale}
q \ell_\sfx \lb  1-p ,  p\rb + (1-q )\ell_\sfx \bbE V_i
\df \lxu (1-p , p ) +\lxv\bbE \sfV_i
 = \sfx ,
\ee
{
where we have defined $\lxu = q\ell_\sfx$ and $\lxv = (1-q)\ell_\sfx$.
}
We shall rely on the elementary Lemma~\ref{lem:V-walk} below,
which is claimed to hold uniformly in i.i.d sequences $\lbr \sfV_i\rbr$ satisfying
\eqref{eq:p-U-1} and \eqref{eq:Vi-vert}:
%random walks $\sfR^\sfV_k$:
 Let us fix $ C\in (0,\infty )$ and, given $\sfu\in\bbR^2$ and
$r \in\bbN$ define
\[
 \Lambda_r (\sfu ) = \sfu + \{[-Cr, \dots ,Cr]\times [-r , r]\} .
\]
For a horizontal lattice line $\calB$ let us say that
% $\calL\sim \Lambda_r (\sfu )$ if
$\calB\cap\Lambda_r (\sfu ) \neq \emptyset$
% (in other words,
if $\calB$ passes through
$\Lambda_r (\sfu )$.
\begin{lemma}
 \label{lem:V-walk}
There exist $C = C (\delta , r_0 , \nu_0 )$ and $c = c (\delta , r_0 , \nu_0 )$
such that
\be
\label{eq:V-walk-hits}
\sum_{\abs{k -
{
n
}
}\leq cr} \bbP \lb
\sfR^\sfV_k \in\calB\cap\Lambda_r (
{
n
}
\bbE\sfV   )\rb \eqvs \frac{r^2}{
{
n
}
 \vee 1}
\ee
uniformly in $
{
n \geq 0
}
$, integers $r\leq \sqrt{
{
n
}
}\vee 1$,
horizontal lines $
{
\calB\cap\Lambda_r (n\bbE\sfV   )\neq \emptyset
}
$ and
$\beta$ large.
\end{lemma}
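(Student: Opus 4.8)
The plan is to obtain \eqref{eq:V-walk-hits} from a local central limit theorem for the planar walk $\sfR^\sfV_\ell = \sum_{i=1}^{\ell}\sfV_i$ that is \emph{uniform in $\beta$}, and then to sum it over the window $\lbr \ell:\abs{\ell-n}\le cr\rbr$. Two structural facts feed into this. First, by \eqref{eq:p-U-1} the increment $\sfV_i$ has exponentially decaying tails uniformly in $\beta$; in particular it has moments of all orders and an analytic characteristic function, with bounds uniform in $\beta$. Second --- and this is the reason the decomposition \eqref{eq:Rk-repr} was set up this way --- the law of $\sfV_i$ stays \emph{uniformly non-degenerate} as $\beta\to\infty$: by the construction of $\sfV_i$ in Subsection~\ref{sub:Rk} (the choices of $\alpha_\sfx^i$ in \opt{1} and \opt{2}) it charges a non-collinear triple of lattice vectors, e.g. the affinely independent triple $\sfe_1, \sfe_2, 4\sfe_1-\sfe_2$, with probability at least a constant $\delta_1=\delta_1(\delta)>0$, so that its covariance matrix $\Sigma_\beta$ has both eigenvalues in a fixed interval $[\lambda_-,\lambda_+]\subset(0,\infty)$, uniformly in $\beta$ large; this strengthens the two-point bounds recorded in \eqref{eq:Vi-vert}. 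The steps also generate an index-$\iota$ sublattice of $\bbZ^2$ with $\iota\in\lbr 1,2\rbr$ (the off-sublattice steps being exponentially rare), so $\sfR^\sfV_\ell$ is aperiodic up to this multiplicity. Finally, it is enough to treat a horizontal line $\calB=\lbr\sfy:\sfy\cdot\sfe_2=h\rbr$ with $\abs{h-n\,\bbE\sfV\cdot\sfe_2}\le r$; then $S:=\calB\cap\Lambda_r(n\bbE\sfV)$ is a run of $\Theta(r)$ consecutive lattice points whose horizontal coordinates all lie within $O(Cr)$ of $n\,\bbE\sfV\cdot\sfe_1$, of which $\Theta(r)$ belong to the coset reachable by $\sfR^\sfV_\ell$ for a given $\ell$.

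With these inputs I would quote the classical local CLT obtained by Fourier inversion --- Gaussian behaviour of the characteristic function near the origin, a spectral gap $\sup\abs{\bbE\,{\rm e}^{{\rm i}\theta\cdot\sfV_i}}<1$ on compact sets away from the origin (both uniform in $\beta$ by non-degeneracy), and the exponential tails controlling the remainder. It gives, uniformly in $\beta$ large, $\ell\ge 1$ and $\sfz$ in the reachable coset,
\[
\bbP\lb\sfR^\sfV_\ell=\sfz\rb = g_\ell^\beta\lb\sfz - \ell\,\bbE\sfV\rb + \varepsilon_\ell(\sfz),\qquad \ell\,\sup_{\sfz}\abs{\varepsilon_\ell(\sfz)}\to 0\ \text{as}\ \ell\to\infty,
\]
where $g_\ell^\beta$ is the coset-normalised centred Gaussian density with covariance $\ell\Sigma_\beta$. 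From $\lambda_\pm$ and the uniform error one extracts $0<c_-<c_+<\infty$, depending only on $\delta,r_0,\nu_0$ (and, for $c_-$, on a cutoff $A$), such that $\bbP(\sfR^\sfV_\ell=\sfz)\le c_+/\ell$ for all $\sfz$, and $\bbP(\sfR^\sfV_\ell=\sfz)\ge c_-/\ell$ whenever $\sfz$ is in the reachable coset with $\abs{\sfz-\ell\,\bbE\sfV}\le A\sqrt\ell$.

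The summation is then routine. Fix any $c\ge 1$ and $C\ge 1$. For $\abs{\ell-n}\le cr$ and $\sfz\in S$, since $r\le\sqrt n$ and $\abs{\bbE\sfV}\le\delta_2^{-1}$,
\[
\abs{\sfz-\ell\,\bbE\sfV}\le\abs{\sfz-n\,\bbE\sfV}+\abs{\ell-n}\,\abs{\bbE\sfV}\le\lb C+c\delta_2^{-1}\rb r\le\lb C+c\delta_2^{-1}\rb\sqrt n,
\]
and $\ell\eqvs n$, so $\bbP(\sfR^\sfV_\ell=\sfz)\eqvs 1/n$ for $\sfz$ in the reachable coset (constants depending on $C,c,\delta$) and $\le c_+/n$ in general. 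Since for each $\ell$ in the window $S$ carries $\Theta(Cr)$ points of the coset reachable at time $\ell$, this yields $\bbP(\sfR^\sfV_\ell\in S)\eqvs Cr/n$; summing over the $\Theta(cr)$ values of $\ell$ gives $\sum_{\abs{\ell-n}\le cr}\bbP(\sfR^\sfV_\ell\in S)\eqvs (cr)(Cr)/n\eqvs r^2/n$, uniformly in $\beta$ large and in integers $1\le r\le\sqrt n$, for all $n$ exceeding some $n_0$ independent of $\beta$. Any fixed $C,c\ge1$ works and taking $C$ large only enlarges the comparison constant, which is what the downstream applications of the lemma require. In the residual bounded range $0\le n\le n_0$ (which forces $r=1$) the left side of \eqref{eq:V-walk-hits} is a finite sum of probabilities of $\sfR^\sfV$ landing, after $O(1)$ steps, on a bounded horizontal segment: the upper bound by a constant is trivial, and the matching lower bound $\eqvs 1\eqvs r^2/(n\vee 1)$ follows by exhibiting one reachable point of $S$ at the prescribed height using $\bbP(\sfV_i=\sfe_2),\bbP(\sfV_i=4\sfe_1-\sfe_2)\ge\delta_1$, uniformly in $\beta$.

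The one genuinely delicate ingredient is the uniform-in-$\beta$ non-degeneracy of $\Sigma_\beta$. A priori the step law of the \emph{full} effective walk $\sfR_\ell$ collapses onto a staircase as $\beta\to\infty$ (cf. \eqref{eq:Rk-repr} and Remark~\ref{eq:remGamma3}), and the whole point of peeling off the $\sfV$-part is that it does not; establishing this amounts to checking that the prescriptions \opt{1}, \opt{2} for $\alpha_\sfx^i$ keep uniformly positive mass on an affinely independent triple of steps --- equivalently, a uniformly positive transversal variance $\mathrm{Var}(\sfV_i\cdot\sfh_\sfx^{\perp})$ --- for \emph{all} directions $\arg(\sfx)\in[0,2\pi/5]$ and all $\beta$ large. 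Once this is secured, both the uniform LCLT and the summation above are entirely standard; the residual periodicity bookkeeping (the index-$\iota$ sublattice) is a minor nuisance, absorbed by summing over $\ell$ of both parities.
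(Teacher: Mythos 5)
Your overall strategy --- a local CLT for $\sfR^\sfV$ uniform in $\beta$, followed by summation over the $\Theta(r)$ time slices --- is exactly the route the paper sketches, and the peripheral bookkeeping (periodicity, the bounded range of $n$, the location of $S$ relative to $\ell\,\bbE\sfV$) is fine. The gap is precisely in the input you single out as ``the one genuinely delicate ingredient'': the uniform two--dimensional non-degeneracy of $\Sigma_\beta$. That check \emph{fails}. In \opt{1} of the proof of Proposition~\ref{prop:LB-P} (the regime $\epsilon\le c_0{\rm e}^{-\beta}$) the prescription is $\alpha_\sfx^1=1$, so by \eqref{eq:Vi} one has $\bbP(\sfV_i=\sfe_1)=0$, and the law of $\sfV_i$ puts all but $O({\rm e}^{-\beta})$ of its mass on the two atoms $\sfe_2$ and $4\sfe_1-\sfe_2$ (here $1-q\eqvs{\rm e}^{-\beta}$, while the non-basic steps contribute $O({\rm e}^{-2\beta})/(1-q)=O({\rm e}^{-\beta})$). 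These two atoms lie on the common affine line $\lbr \sfw:\sfw\cdot(1,2)=2\rbr$, so $\mathrm{Var}\lb\sfV_i\cdot(1,2)\rb=O({\rm e}^{-\beta})$ and one eigenvalue of $\Sigma_\beta$ tends to $0$ as $\beta\to\infty$; the ``affinely independent triple'' you invoke is simply not charged in this regime, and \eqref{eq:Vi-vert} never claims it is. Consequently the pointwise bound $\bbP(\sfR^\sfV_\ell=\sfz)\eqvs 1/\ell$ uniformly in $\beta$ is false: for $\ell\lesssim{\rm e}^{\beta}$ the walk, conditioned on its height, is essentially deterministic in the horizontal direction, the charged points of $\calB\cap\Lambda_r$ carry probability $\eqvs 1/\sqrt{\ell}\gg 1/\ell$, and almost all of the $\Theta(Cr)$ points you count carry essentially none. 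Your per-slice estimate $\bbP(\sfR^\sfV_\ell\in S)\eqvs Cr/n$ therefore does not follow from your inputs.

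A concrete symptom: in this near-degenerate regime, with probability $1-O(n{\rm e}^{-\beta})$ the first $n+c$ steps are all basic, $\sfR^\sfV_k=(4M_k,\,k-2M_k)$ with $M_k$ binomial, and a direct computation gives a left-hand side of \eqref{eq:V-walk-hits} of order $r/\sqrt{n}$ rather than $r^2/n$ when $r\ll\sqrt{n}$ and $n\ll{\rm e}^{\beta}$; so for small $r$ your conclusion cannot be reached by a correct argument, which confirms that the elliptic LCLT step is where things break. What \emph{does} hold uniformly --- and what both the paper's sketch and any repair of your proof must lean on --- is the non-degeneracy of the vertical marginal: $\sfV_i\cdot\sfe_2\in\{+1,-1\}$ with both values charged at least $\delta_1$ by \eqref{eq:Vi-vert}, whence $\sup_h\bbP\lb(\sfR^\sfV_k)_2=h\rb\eqvs 1/\sqrt{k}$. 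In the regime where the lemma is actually applied, $r\eqvs\sqrt{n}$, the box $\Lambda_r$ is wide enough to capture essentially all of the conditional horizontal mass at height $h$ (that mass spreads over a window of length $O\lb\sqrt{n{\rm e}^{-\beta}}\vee 1\rb\ll C\sqrt n$ around an essentially deterministic location), so each time slice contributes $\eqvs 1/\sqrt{n}\eqvs r/n$ \emph{however} the horizontal mass is distributed, and summing over the $\eqvs r$ slices gives $\eqvs r^2/n$. A correct write-up should be organized around this separation of the Bernoulli height component from the sparse transversal component (in the spirit of the decomposition \eqref{eq:Rk-repr}), not around a uniformly elliptic two-dimensional local limit theorem.
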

\paragraph{\bf  Sketch of the proof:}
Eq. \eqref{eq:V-walk-hits} is a coarse estimate, and the logic behind it should be transparent:
By \eqref{eq:p-U-1} $\sfV_i$-s have exponential tails. On the other hand,
\eqref{eq:Vi-vert} yields a lower bound on the non-degeneracy of covariance
structure of $\sfV_i$.
 A usual local limit analysis implies that one can choose $c$ and  $C$ in
such a way that
$\bbP \lb
\sfR^\sfV_k \in\calB\cap\Lambda_r (
n
\bbE\sfV   )\rb \eqvs \frac{r}{n\vee 1} $
 uniformly in all lines
$\calB\cap\Lambda_r (n\bbE\sfV   )\neq \emptyset$ and in all times $k$ with
$\abs{k - n}\leq c r $.

\medskip

\noindent
{
Let us fix a sufficiently large constant $c_0$. How exactly it is fixed is
explained below when we consider \opt{2}.
}
\smallskip

\noindent
\opt{1}. {$\epsilon \leq c_0{\rm e}^{-\beta }$.
}
By Proposition~\ref{prop:aep-bep}
in this regime the probabilities $\bbP_\beta^\sfx (\Gamma_2 ) = {\rm e}^{-b_\epsilon}$ and
$\bbP_\beta^\sfx (\Gamma_3 ) = {\rm e}^{-\beta -\Delta_\epsilon}$ are of the same order ${\rm e}^{-\beta}$.
Consequently, if we  take
$\alpha^2_\sfx = p = 0$ and $\alpha_\sfx^1 = 1$,
%By observation (and by \eqref{eq:non-basic})
both \eqref{eq:q-bound}
%\eqref{eq:p-U-1}
and, also in view of \eqref{eq:non-basic}, \eqref{eq:Vi-vert}
 are  satisfied.

If $p=0$, then the $\sfR^U$-walk is trivial: $\bbP\lb \sfR^U_k = (\ell , 0)\rb = \delta_{k\ell}$.
Consequently, \eqref{eq:decompUV-sum} takes a particular simple form:
\be
\label{eq:decompUV-sum-1}
\bbP_\beta^\sfh \lb 0,  \sfx \rb \eqvs
\sum_{\ell ,m}\bbP (N_\ell =m )
\bbP \lb \sfR^\sfV_m = (x_1 -\ell , x_2) \rb  .
\ee
Recall how $\lxu$ and $\lxv$ were defined in \eqref{eq:lx-scale}. Note that
$\lxu = \frac{q}{1-q}\lxv$.
By construction  $p=0$. Consequently,
$(x_1 -\ell , x_2) = \sfx - (\ell , 0 ) = (\lxu -\ell , 0) +\lxv\bbE\sfV$, and
\[
\bbP \lb \sfR^\sfV_m = (x_1 -\ell , x_2) \rb = \bbP \lb
 \sfR^\sfV_m = \bbE\sfR^\sfV_m - (m-\lxv)\bbE\sfV - (\ell -\lxu , 0)\rb  .
\]
In view of the last of \eqref{eq:Vi-vert}, the main  contribution to \eqref{eq:decompUV-sum-1}
should come from the values of $m$ and $\ell$ satisfying
\[
 \abs{m - \lxv} \leq c_3 \sqrt{\lxv}\ {\rm and}\ \abs{\ell - \lxu} \leq
 c_3 \sqrt{\lxv} .
\]
{ Let us estimate \eqref{eq:decompUV-sum-1} for the values of $m$ and $\ell$
restricted to the latter region.} By a direct application of Stirling formula,
\be
\label{eq:Stirl-bound-N}
\bbP (N_\ell =m ) \eqvs \frac{1}{\sqrt{\lxv}\vee 1}
\ee
uniformly in $\abs{m - \lxv} \leq c_3 \sqrt{\lxv}$  and
$\abs{\ell - \lxu} \leq c_3 \sqrt{\lxv}$.
Furthermore, there exists $c <\infty$, such that
{
  \begin{eqnarray}
    \label{eq:4}
\bbP (N_\ell =m ) \le  \frac{c}{\sqrt{\lxv}\vee 1}
  \end{eqnarray}
for every $m$.}

An application of Lemma~\ref{lem:V-walk} with $r=\sqrt{\lxv}\vee 1$ implies, therefore:
\be
\label{eq:Lem-appl-1}
 \sum_{\abs{\ell^\prime}\leq c_3 \sqrt{\lxv}}\
\sum_{\abs{m - \lxv}\leq c_3 \sqrt{\lxv}}
\bbP \lb \sfR^\sfV_m = \bbE \sfR^\sfV_{\lxv} + \lb  \ell^\prime  , 0\rb\rb \eqvs 1 .
\ee
Together with \eqref{eq:Stirl-bound-N} and {\eqref{eq:4}} this implies that
\be
\label{eq:case1}
\bbP_\beta^\sfh \lb 0, \sfx \rb \eqvs  \frac{1}{\sqrt{\lxv}\vee 1}
{
\eqvs \frac{1}{\sqrt{{\rm e}^{-b_\sfx (\beta )} \abs{\sfx}_1}\vee 1},
}
\ee
uniformly in $\beta$ large and $\sfx$-s complying with \opt{1}.
{
The last asymptotic equivalence;  $\lxv \eqvs {\rm e}^{-b_\sfx (\beta )} \abs{\sfx}_1$,
holds
since by
\eqref{eq:lx-scale}, \eqref{eq:ell-x-vrsx} and by \eqref{eq:q-bound},
\[
\lxv = (1-q)\ell_\sfx \eqvs  (1-q )\abs{\sfx}_1
\eqvs {\rm e}^{-\beta -\Delta_\sfx }\abs{\sfx}_1 .
\]
However, by Proposition~\ref{prop:aep-bep},
 ${\rm e}^{-\beta -\Delta_\sfx }\eqvs {\rm e}^{-\beta} \eqvs {\rm e}^{-b_\sfx }$ uniformly
in $\epsilon \leq c_0 {\rm e}^{-\beta}$ and $\beta$ large.
}
\medskip

\noindent
\opt{2}. {$ \epsilon > c_0{\rm e}^{-\beta}$.}
By \eqref{eq:non-basic} there exists $\eta >0$ such that
\be
\label{eq:G3-control}
{\rm e}^{-\beta - \Delta_\epsilon} \geq \eta\sum_{\sfy \neq\sfe_1 , \sfe_2}\abs{\sfy}_1\bbP_\beta^{\sfh_\epsilon}
\lb \sfX = \sfy\rb ,
\ee
uniformly in $\beta$ large and $\arg\lb (1-\epsilon , \epsilon )\rb \in [0, \frac{2\pi}{5} ]$.
We shall choose
$\alpha_\epsilon^i$ in the decomposition \eqref{eq:Rk-repr} of $R_k$ as follows
\be
\label{eq:Case2-alpha}
\lb 1- \alpha_\epsilon^1\rb {\rm e}^{-a_\epsilon} = \frac{1}{\eta}{\rm e}^{-\beta - \Delta_\epsilon}
\ {\rm and}\
\lb 1- \alpha_\epsilon^2\rb {\rm e}^{-b_\epsilon} = {\rm e}^{-\beta - \Delta_\epsilon}.
\ee
Since $a_\epsilon$ is uniformly  bounded and  $b_\epsilon < \beta < \beta +\Delta_\epsilon$,
\eqref{eq:Case2-alpha} is a feasible choice. In view of \eqref{eq:G3-control} we
readily verify \eqref{eq:Vi-vert} {and also
  \eqref{eq:q-bound}}.
  {In fact recalling how  $q$  was defined in \eqref{eq:q-choice},
  we, in view of \eqref{eq:G3-control}, infer that under \eqref{eq:Case2-alpha} $1-q$ satisfies
  the following bound
 \be
  \label{eq:q-eta-bound}
  1- q\in {\rm e}^{-\beta  - \Delta_\epsilon}\lb 2 +\frac{1}{\eta} , 2 +\frac{2}{\eta}\rb ,
  \ee
  uniformly in $\beta$ large and $\epsilon > c_0 {\rm e}^{-\beta}$.
} %
%This can hold only if $x_2 >0$.
Furthermore, by \eqref{eq:uep-cases} there exists $c_4 <\infty$  such that
{
\be
\label{eq:Case2-bounds}
b_\epsilon \leq \beta - \log c_0  + c_4 {\rm e}^{-\beta }\ {\rm and}\
 \Delta_\epsilon \geq  \log c_0
-c_4   {\rm e}^{-\beta} ,
%+\bigof{{\rm e}^{-\beta}},
\ee
uniformly in $c_0 \geq 2$, $\beta$ large and $\epsilon > c_0{\rm e}^{-\beta}$.
}
This means (see
\eqref{eq:lx-scale} for the definition of $\lxu$ and $\lxv$)  that
\be
\label{eq:Case2-scales}
\lxv = \frac{1-q}{q} \lxu \leq p\lxu \eqvs x_2 \eqvs \abs{\sfx }_1 {\rm e}^{-b_\sfx} ,
\ee
also uniformly  in \opt{2}.
{ Indeed,
the last equivalence follows from \eqref{eq:expectation} and
Proposition~\ref{prop:aep-bep} {choosing $c_0$ large}. On the other hand,
going back to the definition of $p$ in \eqref{eq:p-choice}, the choice of $\alpha_\epsilon^2$ in
in \eqref{eq:Case2-alpha} implies that $p \geq {\rm e}^{-b_\epsilon} - {\rm e}^{-\beta -\Delta_\epsilon}$.
 Comparing with \eqref{eq:q-eta-bound}, and in view of \eqref{eq:Case2-bounds}, we conclude that
  $p\geq (1- q)/q$ as soon as $c_0$ is large enough. Hence the first inequality in
  \eqref{eq:Case2-scales}.
}

Let us go back to \eqref{eq:decompUV-sum} and write:
\be
\label{eq:sum-lmr}
 \bbP_\beta^\sfh \lb 0,  \sfx \rb \eqvs
\sum_{\ell ,m}\bbP (N_\ell =m )\sum_{
%\ell_1 +\ell_2 = \ell
r}
\bbP (\sfR_\ell^\sfU = (
%\ell_1 ,\ell_2 )
\ell-r , r) \bbP \lb \sfR^\sfV_m = \sfx -(
%\ell_1 , \ell_2 )
\ell-r , r)\rb
\ee
By Stirling's formula the main contribution to
\be
\label{eq:1-term}
 \bbP (N_\ell =m )\eqvs \frac{1}{\sqrt{m}}\ \text{
comes from $\abs{\ell - \frac{q}{1-q} m} \leq c_4 \sqrt{m}$}.
\ee
\newline
Next, since $\bbE \sfR_\ell^\sfU = \ell (1-p , p )$,
\[
 \bbP \lb \sfR_\ell^\sfU = (
%\ell_1 ,\ell_2 )
\ell-n , n) \rb = \bbP \lb \sfR_\ell^\sfU = \bbE \sfR_\ell^\sfU + (\ell p -n , n-\ell p )\rb ,
\]
and consequently, again by Stirling's formula,  the main contribution to
\be
\label{eq:2-term}
%\sum_{r}
\bbP \lb \sfR_\ell^\sfU = (
%\ell_1 ,\ell_2 )
\ell-n , n) \rb
%\bbP (\sfR^\sfV_m = \sfx -(
%\ell_1 , \ell_2 )
%\ell-r )
\eqvs
\frac{1}{\sqrt{p\ell}} \ \text{comes from $\abs{n - \ell p }\leq c_5 \sqrt{\ell p}$}.
\ee
\newline
Recall \eqref{eq:lx-scale} that $\sfx = \lxu (1-p , p) +\lxv\bbE\sfV$. Therefore,
setting $\bar n = p\lxu $,
\be
\label{eq:x-decomp}
\begin{split}
\sfx -(
%\ell_1 , \ell_2 )
\ell-n , n) &= \bbE\sfR_{\lxv}^\sfV  + \lb (\lxu - \ell ) - (\bar n - n),  (\bar n -  n )\rb
\\
& =
\bbE\sfR_{ m}^\sfV + (\lxv - m) \bbE\sfV
+ \lb (\lxu - \ell ) - (\bar n - n),  (\bar n -  n )\rb .
%\lb (\lxu - \ell ) + (r -\bar r), - (r - \bar r )\rb .
\end{split}
\ee
Since we restrict attention to $\ell$ and $m$ satisfying the second of \eqref{eq:1-term},
and since $\lxu = \frac{q}{1-q}\lxv$, the main contribution to
\be
\label{eq:3-term}
\bbP \lb \sfR^\sfV_m = \sfx -(
%\ell_1 , \ell_2 )
\ell-n , n)\rb   \
\text{comes from $\abs{m-\lxv}, \abs{\ell - \lxu}, \abs{n - \bar n}\leq c_6\sqrt{\lxv}$}.
\ee
Let us go back to \eqref{eq:sum-lmr}. In view of \eqref{eq:1-term}-\eqref{eq:3-term},
\be
\label{eq:case2-subst}
%\begin{split}
\bbP_\beta^\sfh\lb 0, \sfx \rb
\eqvs
\frac{1}{\sqrt{p\lxu}} \frac{1}{\sqrt{\lxv}\vee 1}
{\sum}^*
\bbP \lb \sfR^\sfV_m = \bbE \sfR_{\lxv}^\sfV +
\lb (\lxu - \ell ) - (\bar n - n),  (\bar n -  n )\rb
\rb
%\end{split}
\ee
where
\[
 {\sum}^* \df
\sum_{\abs{\ell-\lxu }\leq c_7 \sqrt{\lxv}\vee 1}\
\sum_{\abs{n - \bar n}\leq c_7 \sqrt{\lxv}\vee 1} \
\sum_{\abs{m -\lxv}\leq c_7 \sqrt{\lxv}\vee 1} .
\]
By an application of Lemma~\ref{lem:V-walk} (again with $r = \sqrt{\lxv}\vee 1$),
{
and in view of \eqref{eq:Case2-scales},
}
\be
\label{eq:case12}
\bbP_\beta^\sfh \lb 0, \sfx \rb \eqvs \frac{1}{\sqrt{x_2}}
{
\eqvs \frac{1}{\sqrt{{\rm e}^{-b_\sfx (\beta )}\abs{\sfx}_1}}
}
,
\ee
uniformly in \opt{2}.

Putting \eqref{eq:case1} and \eqref{eq:case12} together we deduce the claim \eqref{eq:LB-P}
of Proposition~\ref{prop:LB-P}.
\end{proof}

\subsection{Alili-Doney representation.}
\label{sub:AD}
The (strict) event $\hat \frR_\ell^+$ is defined similarly to \eqref{eq:Rlplus-n},
\be
\label{eq:Rlplus}
 {\hat\frR_\ell^{+ ,\sfn} (\sfw,\sfz ) = \lbr \uGamma:\, S_1, \dots , S_{\ell -1} > 0
 ; \sfR_0=\sfw,\sfR_\ell =\sfz\rbr .}
\ee
In order to explore
$\bbP_{\beta , +}^\sfh (\sfw , \sfz ) = \sum_\ell
\bbP_\beta^\sfh \lb\frR_\ell^{+ ,\sfn} (\sfw , \sfz )\rb$-terms in \eqref{eq:Gbound-1}
we need both strict and non-strict events .
Indeed, define $\hat \bbP_{\beta , +}^\sfh (\sfw , \sfz ) = \sum_\ell
\bbP_\beta^\sfh \lb\hat \frR_\ell^{+ ,\sfn} (\sfw , \sfz )\rb$
Then, the  decomposition of
{
effective random walk trajectory  $\lb \sfw = \sfR_0, \dots , \sfR_\ell = \sfz\rb$
with respect to the first {absolute} minimum $\sfy = \sfR_k$;
\[
 \sfy\cdot\sfn = \min_{m}\sfR_m\cdot\sfn\quad{\rm and}\quad
 \sfR_m\cdot\sfn >\sfy\cdot \sfn\ \text{for any $m<k$},
\]
yields:
}
{
\be
\label{eq:Ruv-decomp}
\bbP_{\beta ,+}^\sfh \lb
%\frR^{+ ,\sfn} (
\sfw , \sfz
%)
\rb =
%\sum_\ell
%\bbP_\beta^\sfh \lb \frR_\ell^{+ ,\sfn} (\sfw , \sfz )\rb =
\sum_\sfy \hat \bbP_{\beta , +}^{
\sfh
%}
}
\lb \sfw - \sfy , 0
\rb
\bbP_{\beta ,+}^\sfh \lb
%\frR^{+ ,\sfn} (
0, \sfz -\sfy
% )
\rb ,
\ee
}
\smallskip

\noindent
{\em Non-strict  {ascending} ladder height} $H_1$ of $S_\ell$  is defined via
\be
\label{eq:LV-ns}
 H_1 = S_{\tau_1}\quad {\rm where}\quad \tau_1 = \min\{\ell >0 \, :\, S_\ell \geq S_0\}
\ee
$H_2, H_3, \dots$ are defined recursively.

\noindent
{\em Strict {descending} ladder height} $\hat H_1$ of $S_\ell$  is defined via
\be
\label{eq:LV-s}
 \hat H_1 = S_{\hat \tau_1}\quad {\rm where}\quad \hat\tau_1 = \min\{\ell >0 \, :\, S_\ell  {<} S_0\}
\ee
$\hat H_2, \hat H_3, \dots$ are defined recursively.

Let
 $N^+_m ( z )$
 {be the total number of
  {\em non-negative}
 non-strict ladder heights $H_i \leq z$
 reached during first $m$ steps by  the
effective random walk $S_\ell$ defined in \eqref{eq:Swalk}.
Similarly. let
 $ N^-_m (z)$ be the total number of {\em non-negative}
   strict descending ladder heights  $ \hat H_j\leq z$
   reached during the first $m$ steps of  $S_\ell$.
  We drop sub-index
$m$ whenever talking about $m=\infty$ ({that is whenever talking about the}
total number of ladder heights).
}

{
For $\sft\in \calH_{+ , \sfn}$ define events
\be
\label{eq:L-heights}
\calL^+_\sft = \lbr \exists\, i : \sfR_{\tau_i} =\sft\rbr\quad
\text{and}\quad
\calL^-_\sft =
\lbr \exists\, i : \sfR_{\hat\tau_i} =\sft\rbr .
\ee
Then,  recalling that $\frA_m(\sfx,\sfy)$ was defined just after
\eqref{eq:D-shape},
\be
\label{eq:Heights-vrsRplus}
\hat\bbP_{\beta ,+}^\sfh \lb \sfv , 0\rb = \sum_m \bbP_\beta^\sfh
\lb \frA_m ( \sfv , 0); \calL^-_0\rb\ {\rm and}\
\bbP_{\beta ,+}^\sfh \lb 0, \sfv \rb = \sum_m \bbP_\beta^\sfh
\lb \frA_m (0, \sfv ); \calL^+_\sfv \rb .
\ee
The first of \eqref{eq:Heights-vrsRplus} is straightforward. The second
follows by a well-known rearrangement argument: If $s = \sum_1^m s_j$
 and $\sum_1^ks_j \geq 0$ for any $k$, then $\hat S_k = \sum_{m-k+1}^m s_j$
for $k=1... m$
 satisfies $\hat S_m = s = \max_k\hat S_k$.
}

An
adaptation of the
 combinatorial lemma by Alili and Doney \cite{AD}
for the effective random walk setup was formulated \cite{CIL}.
In particular, for any $\sfv\in \calQ_+\setminus 0$ and $\sfn$,
{
\be
\label{eq:AD-expBound}
\begin{split}
\bbP_{\beta, +}^{\sfh}\lb 0  ,\sfv \rb
&= \sum_m  \frac{1}{m} \, \bbE_\beta^{\sfh}\lb {\frA_{m}}   (0 ,\sfv )\, ;\,
N^+_m (\sfv\cdot \sfn )\rb
{
\eqvs \frac{1}{\abs{\sfv}_1} \sum_{\frac{\abs{\sfv}_1}{c} \leq m \leq c\abs{\sfv}_1}
\!\!\!\! \!\!\!\bbE_\beta^{\sfh}\lb {\frA_{m}}   (0 ,\sfv )\, ;\,
N^+_m (\sfv\cdot \sfn )\rb
}
\\
&\quad {\rm and}\\
\hat \bbP_{\beta , +}^{\sfh}\lb \sfv , 0 \rb
&= \sum_m \frac{1}{m} \, \bbE_\beta^{\sfh}\lb {\frA_{m}}   ( \sfv , 0 )\, ;\,
N^-_m  (\sfv\cdot \sfn )\rb
\eqvs \frac{1}{\abs{\sfv}_1} \sum_{\frac{\abs{\sfv}_1}{c} \leq m \leq c\abs{\sfv}_1}
\!\!\!\! \!\!\!
\bbE_\beta^{\sfh}\lb {\frA_{m}}   ( \sfv , 0 )\, ;\,
N^-_m  (\sfv\cdot \sfn )\rb .
\end{split}
\ee
{ Here for the event ${\frA}$ and the random variable $N$ we use the notation $\bbE\lb {\frA}\, ;\,
N\rb$ for the expectation of the random variable $\bbI_{{\frA}}N$.}
The $\eqvs$ relation in both of \eqref{eq:AD-expBound} follows from
\eqref{eq:breakpoints}, and it is uniform in $\sfn$ and $\beta$ large.
}
However, since in
our context the dependence on $\beta$ of coefficients in inequalities is important,
and since for general directions $\sfn$ the range of the effective random walk $S_\ell$ is
quite different from $\bbZ$,
we need to rerun
the \eqref{eq:AD-expBound}-based  computations of \cite{CIL} more carefully.

{
Relations \eqref{eq:AD-expBound} imply that
\be
\label{eq:AD-expBound-ub}
\begin{split}
\bbP_{\beta, +}^{\sfh}\lb 0  ,\sfv \rb &\leq \frac{c_1}{\abs{\sfv}_1}
\bbE_\beta^{\sfh}\lb {\frA}   (0 ,\sfv )\, ;\,
N^+_{c\abs{\sfv}_1} (\sfv\cdot \sfn )\rb\ \\
&\quad {\rm and}\\
\hat \bbP_{\beta, +}^{\sfh}\lb \sfv , 0 \rb &\leq \frac{c_1}{\abs{\sfv}_1}
\bbE_\beta^{\sfh}\lb {\frA}   (\sfv , 0 )\, ;\,
N^-_{c\abs{\sfv}_1} (\sfv\cdot \sfn )\rb .
\end{split}
\ee
}
Since $\sum_m\1_{\lbr \sfR_m = \sfv\rbr}$ is an indicator,  we have for all $k\ge 1$
\be
\label{eq:Holder-AD-OZ}
\begin{split}
&\bbE_\beta^{\sfh}\lb
{
\frA
}
(0 ,\sfv )\, ;\, N^+_{c\abs{\sfv}_1} (\sfv\cdot \sfn )\rb =
\sum_m \bbE_\beta^{\sfh}\lb
\frA_{m}
(0 ,\sfv )\, ;\, N^+_{c\abs{\sfv}_1}
(\sfv\cdot \sfn )\rb \\
&\quad = \bbE_\beta^{\sfh}\lb \sum_m\1_{\lbr \sfR_m = \sfv\rbr}\, ;\,
N^+_{c\abs{\sfv}_1} (\sfv\cdot \sfn )\rb
\leq \lb
\bbP_\beta^{\sfh} \lb 0, \sfv\rb\rb^{\frac12}
 \lb \bbE_\beta^{\sfh}\lb N^+_{c\abs{\sfv}_1} (\sfv\cdot \sfn )\rb^2 \rb^{\frac{1}{2}}.
\end{split}
\ee
{
Similarly,
\be
\label{eq:Holder-AD-OZ-d}
\bbE_\beta^{\sfh}\lb\frA   ( \sfv , 0 )\, ;\, N^-_{c\abs{\sfv}_1} (\sfv\cdot \sfn )\rb
\leq \lb
\bbP_\beta^{\sfh} \lb \sfv , 0\rb\rb^{\frac12}
 \lb \bbE_\beta^{\sfh}\lb N^-_{c\abs{\sfv}_1} (\sfv\cdot \sfn )\rb^2 \rb^{\frac{1}{2}}
\ee
}
Let us make a general statement for one-dimensional random walks
$S_\ell$, {with $S_0 {\geq} 0$}:
\begin{lemma}
\label{lem:Np-k}
Assume that for some $
{
\eta
}
, p >0$,
\be
\label{eq:ep-p}
\bbP (H_1
{
-S_0
}
\geq \eta ; {\tau_1 <\infty}  ) \geq  p .
\ee
Then, for any $z \geq 0$ and for any power $k\geq 1$,
%\blue{ REPLACE z by $z-S_0$}
\be
\label{eq:Np-k}
\bbE\lb N^+_m (z)^k \rb  \leq \bbE  \lb
%\left\lceil\frac{z}{\epsilon}\right\rceil +
\sum_1^{\left\lceil\frac{z}{\eta }\right\rceil \wedge m} M_i\rb^k \leq \lb c_k
\frac{\left\lceil\frac{z}{\eta}\right\rceil\wedge m}{p}\rb^k
%\lb \left\lceil\frac{z}{\eta}\right\rceil\wedge m\rb\rb^k \frac{1}{p^k },
\ee
where $M_1, M_2, \dots , $ are independent ${\rm Geo} (p )$. Here $c_k$ is a combinatorial
constant which does not depend on $p$, $z$ or $\eta$.
The same holds for
strict
{descending}
ladder heights (i.e. if \eqref{eq:ep-p} holds for $ S_0 - \hat H_1$
then \eqref{eq:Np-k} holds for $ N^-$).
\end{lemma}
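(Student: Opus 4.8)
The plan is to dominate $N_m^+(z)$ pathwise by a capped sum of i.i.d.\ geometric variables, and then to invoke a textbook moment estimate for such sums.

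\emph{First I would record that the non-strict ascending ladder heights jump by at least $\eta$ at rate at least $p$.} Write $H_0:=S_0$. At the ladder time $\tau_i$ the walk sits at its running maximum $S_{\tau_i}=H_i$, so by the strong Markov property the post-$\tau_i$ ladder structure is a fresh copy of the one started from level $H_i$; hypothesis \eqref{eq:ep-p} then gives, on $\lbr\tau_i<\infty\rbr$,
\[
\bbP\lb H_{i+1}-H_i\geq\eta,\ \tau_{i+1}<\infty\ \big|\ \calF_{\tau_i}\rb\ \geq\ p .
\]
On an enlarged probability space I would couple the ladder process with i.i.d.\ $\mathrm{Bernoulli}(p)$ variables $B_1,B_2,\dots$ so that $B_i=1$ whenever $\lbr H_i-H_{i-1}\geq\eta,\ \tau_i<\infty\rbr$ occurs (on the complement $B_i$ is a fresh independent coin, which is harmless since once $\tau_i=\infty$ there are no further ladder heights). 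Let $M_1,M_2,\dots$ be the successive gaps between $1$'s of $(B_i)$; these are i.i.d.\ $\mathrm{Geo}(p)$.

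\emph{Next I would establish the pathwise bound.} If $H_i$ exists and $H_i\leq z$, then summing increments and using $S_0\geq 0$,
\[
\eta\cdot\#\lbr l\leq i:\ H_l-H_{l-1}\geq\eta\rbr\ \leq\ H_i-S_0\ \leq\ z ,
\]
so at most $\lceil z/\eta\rceil$ of $B_1,\dots,B_i$ are equal to $1$; equivalently, as soon as $\lceil z/\eta\rceil$ of the $B_l$'s have fired the ladder height has left $(-\infty,z]$. Hence the number $\mu$ of ladder heights that are $\leq z$ satisfies $\mu\leq\sum_{i=1}^{\lceil z/\eta\rceil}M_i$ (in the borderline case $\eta\mid z$ one may need the extra summand $M_{\lceil z/\eta\rceil+1}$, which only affects the final constant). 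Since each ladder step consumes at least one time unit, $\tau_i\geq i$, and therefore $N_m^+(z)=\#\lbr i\leq\mu:\ \tau_i\leq m\rbr\leq\mu\wedge m$. Because every $M_i\geq 1$ one has the deterministic inequality $\lb\sum_1^{a}M_i\rb\wedge m\leq\sum_1^{a\wedge m}M_i$; combining,
\[
N_m^+(z)\ \leq\ \sum_{i=1}^{\lceil z/\eta\rceil\wedge m}M_i
\]
on the coupling space. Applying the increasing map $t\mapsto t^k$ and taking expectations yields the first inequality in \eqref{eq:Np-k}.

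\emph{Then the moment bound and the descending case are routine.} With $n:=\lceil z/\eta\rceil\wedge m$, Minkowski's inequality gives $\|\sum_1^n M_i\|_k\leq n\|M_1\|_k$, while $\bbE M_1^k\leq c_k' p^{-k}$ for a combinatorial constant $c_k'=c_k'(k)$; hence $\bbE\lb\sum_1^n M_i\rb^k\leq(c_k n/p)^k$, the second inequality. For the strict descending ladder heights, \eqref{eq:ep-p} applied to $S_0-\hat H_1$ says that each successive ladder height drops by at least $\eta$ with conditional probability $\geq p$; running the same coupling downward from level $\min\lbr S_0,z\rbr$ (the heights counted by $N_m^-(z)$ lying in $[0,z]$ and below $S_0$) gives the identical bound. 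The one genuinely substantive point is the first step --- making the strong-Markov reset at the ladder times precise and checking that the coupling with $(B_i)$ still goes through on the event that the walk produces only finitely many ladder heights; everything else is bookkeeping plus the standard negative-binomial moment estimate.
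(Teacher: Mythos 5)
Your argument is the paper's own: split the non-strict ascending ladder heights into ``substantial'' ones (increment at least $\eta$), observe that at most $\lceil z/\eta\rceil$ of these can lie in $[0,z]$, and dominate the number of ladder epochs between consecutive substantial ones by i.i.d.\ $\mathrm{Geo}(p)$ variables; the cap at $m$ via $\tau_i\geq i$, the Minkowski/negative-binomial moment estimate, and the descending case are handled the same way. Your write-up is in fact more explicit than the paper's about the coupling, the $\eta\mid z$ borderline, and the $\wedge\, m$ truncation.

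The one step you must fix is the direction of the coupling. You request i.i.d.\ $\mathrm{Bernoulli}(p)$ variables with $B_i=1$ \emph{whenever} the substantial event occurs, i.e.\ $\{\text{substantial}_i\}\subseteq\{B_i=1\}$. Since the conditional probability of the substantial event is $\geq p$ while $\mathbb{P}(B_i=1)=p$, such a coupling cannot exist unless equality holds; and it is also the wrong containment for your next deduction, where from ``at most $\lceil z/\eta\rceil$ of the increments $H_1-H_0,\dots,H_i-H_{i-1}$ are substantial'' you conclude ``at most $\lceil z/\eta\rceil$ of $B_1,\dots,B_i$ equal $1$'' --- that needs $\{B_i=1\}\subseteq\{\text{substantial}_i\}$. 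The reversed coupling is the achievable one (on the substantial event set $B_i=1$ with conditional probability $p/\mathbb{P}(\text{substantial}_i\mid\mathcal{F}_{\tau_{i-1}})$, and set $B_i=0$ otherwise; then $\mathbb{P}(B_i=1\mid\mathcal{F}_{\tau_{i-1}})\equiv p$, so the $B_i$ are i.i.d.), and with it every subsequent step of your proof goes through verbatim. The paper states the same fact as a stochastic domination of the inter-substantial gaps by $\mathrm{Geo}(p)$ without exhibiting a coupling, so once the containment is flipped the two proofs coincide.
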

\begin{proof} Let us say that $H_i$ (respectively $\hat H_i$) is a
  substantial ascending ladder (descending strict ladder) height if $H_i - H_{i-1}\ge \eta$
  (respectively, if $\hat H_{i-1} - \hat H_{i}\ge \eta$).

  We proceed with
talking only about ascending ladder heights, the argument for strict
descending
ladder heights would be a literal
repetition.
{Since we are counting non-negative ladder heights}
there are at most $\left\lceil\frac{z}{\eta}\right\rceil$ substantial  ladder heights
$H_i$ with $0 \leq H_i\leq z$.
The number $M$ of ladder heights between two successive substantial ladder heights is
stochastically
dominated by ${\rm Geo} (p )$.
The inequality \eqref{eq:Np-k} just states that $N^+_m (z)$ is bounded above
by $M_1 +\dots + M_{\left\lceil\frac{z}{\eta}\right\rceil\wedge m}$, which is the
total  number
of ladder heights
reached during
the first $m$ steps of the walk
 or until the $\left\lceil\frac{z}{\eta}\right\rceil$-th
substantial ladder height is produced.
\end{proof}
\smallskip

\paragraph{\bf Upper bounds on $\bbE_\beta^{\sfh_\epsilon}\lb
N^+_{m} (\sfs\cdot \sfn )\rb^k$ and
$\bbE_\beta^{\sfh_\epsilon}\lb
N^-_{m} (\sfs\cdot \sfn )\rb^k$.}
Recall that we are assuming that $\sfu\cdot \sfn \leq \sfv\cdot\sfn$, and
that $(\sfu , \sfv )$ belongs to
the set $\calA$ defined in \eqref{eq:crude-rho}. In particular, $\sfu$ and $\sfv$ satisfy
\eqref{eq:nu-uv-cond}. We continue to denote $ \sfv - \sfu
= \abs{\sfv - \sfu}_1\lb 1-\epsilon , \epsilon\rb$, $\sfh_\epsilon =
\nabla\tau_\beta (\sfv -\sfu )$   and
$\sfn = \abs{\sfn}_1\lb -\epsilon_\sfn , 1-\epsilon_\sfn \rb$. The second of
\eqref{eq:nu-uv-cond} implies that
\be
\label{eq:ep-epn}
0\leq \epsilon_\sfn \leq \epsilon \leq \epsilon_\sfn + {\rm e}^{-\nu\beta} ,
\ee
for some $\nu >1$.

{
Let us start with expectations of $N^+$:
\begin{lemma}
 \label{lem:Nplus}
  (a) If $\arg (\sfn ) = \frac{\pi}{2}$, then
%the bound \eqref{eq:Holder-AD-OZ1} is satisfied for ladder heights with
\be
\label{eq:Nplus-1}
\lb\bbE_\beta^{\sfh_\epsilon}
N^+_{m} (\sfs\cdot\sfn )^k\rb^{\frac{1}{k}} \leq
c_k \lb {\rm d}_\sfn (\sfs ) {\rm e}^{b_\epsilon}\rb\wedge m,
\ee
uniformly
%$\arg (\sfn  )\in [\frac{\pi}{2} , \frac{3\pi}{4}]$,
 in $\sfs$ and in
all $\beta$ sufficiently large.
\newline
(b) On the other hand, the bound
\be
\label{eq:Nplus-2}
\lb\bbE_\beta^{\sfh_\epsilon}
N^+_{m} (\sfs\cdot\sfn )^k\rb^{\frac{1}{k}} \leq
c_k {\rm d}_\sfn (\sfs ) \wedge m
\ee
is satisfied
uniformly in $\arg (\sfn ) \in (\frac{\pi}{2} , \frac{3\pi}{4}]$,
in $\sfs$
 and
all $\beta$ sufficiently large.
\end{lemma}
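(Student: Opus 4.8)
The plan is to deduce Lemma~\ref{lem:Nplus} from the combinatorial estimate Lemma~\ref{lem:Np-k}, supplemented by the trivial bound $N^+_m (z )\leq m$ (at most one ladder epoch per step). It suffices, in each of the two regimes, to exhibit $\eta >0$ and $p>0$ for which the overshoot hypothesis $\bbP_\beta^{\sfh_\epsilon}\lb H_1 - S_0 \geq \eta ;\, \tau_1 <\infty\rb \geq p$ of \eqref{eq:ep-p} holds; then \eqref{eq:Np-k} gives $\lb\bbE_\beta^{\sfh_\epsilon}N^+_m (\sfs\cdot\sfn )^k\rb^{1/k}\leq c_k\lb \lceil (\sfs\cdot\sfn )/\eta\rceil /p\rb \wedge m$, where the outer $\wedge m$ comes from $N^+_m\leq m$ — it is genuinely needed, since the $\wedge m$ already present in \eqref{eq:Np-k} is inflated upon division by $1/p$. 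Since $\arg (\sfn )\in [\frac{\pi}{2},\frac{3\pi}{4}]$ forces $\sfs\cdot\sfn \leq \abs{\sfn}_1\,\dd_\sfn (\sfs ) \leq 2\,\dd_\sfn (\sfs )$, the lemma reduces to choosing $\eta ,p$ (with all constants uniform) so that $\lceil (\sfs\cdot\sfn )/\eta\rceil /p \leq c\,\dd_\sfn (\sfs ){\rm e}^{b_\epsilon}$ in part (a) and $\leq c\,\dd_\sfn (\sfs )$ in part (b).

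Part (a) is essentially immediate: for $\arg (\sfn )=\frac{\pi}{2}$ one has $\sfn =\sfe_2$, so among the basic steps $\frS_0 =\lbr\sfe_1 ,\sfe_2 ,4\sfe_1 -\sfe_2\rbr$ only $\sfe_2$ carries a positive increment and $\sfe_2\cdot\sfn =1$. Hence, if the first step of the effective walk is $\sfe_2$ then $\tau_1 =1$ and $H_1 -S_0 =1$, so \eqref{eq:ep-p} holds with $\eta =1$ and $p=\bbP_\beta^{\sfh_\epsilon}\lb\sfX =\sfe_2\rb ={\rm e}^{-b_\epsilon}$ by \eqref{eq:ProbG1G2}. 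Plugging into the display above and using $\lceil\sfs\cdot\sfn\rceil\leq\dd_\sfn (\sfs )$ gives \eqref{eq:Nplus-1}; this is an exact inequality, requiring no asymptotics of $b_\epsilon$.

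Part (b) is the substantive case. For $\arg (\sfn )\in (\frac{\pi}{2},\frac{3\pi}{4}]$ one has $n_1 <0$, so $\sfe_1\cdot\sfn =n_1 <0$: no effective-walk step of non-negligible probability has zero $\sfn$-component (the only candidate displacement $\sfe_1 +\sfe_2$ has weight $\bigof{{\rm e}^{-2\beta}}$ by \eqref{eq:non-basic}), hence every non-strict ascending ladder epoch strictly raises the running maximum. We aim for \eqref{eq:ep-p} with $\eta =\eta_0 >0$ and $p=p_0 >0$ \emph{uniform} — in particular as $\arg (\sfn )\downarrow\frac{\pi}{2}$, where by \eqref{eq:ep-epn} both $\epsilon_\sfn$ and $\epsilon$ are exponentially small. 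The non-negative drift $\bbE_\beta^{\sfh_\epsilon}(S_1 -S_0 )\geq 0$ (Theorem~\ref{thm:OZ}, recalling $\sfn\cdot (\sfv -\sfu )\geq 0$) makes $\tau_1 <\infty$ a.s., so it remains to bound $\bbP_\beta^{\sfh_\epsilon}\lb H_1 -S_0 \geq\eta_0\rb$ from below. On $\lbr\tau_1 <\infty\rbr$ the crossing step $Z_{\tau_1}$ is strictly positive, hence (discarding the $\bigof{{\rm e}^{-2\beta}}$ non-basic contributions) equals $n_2$, and $H_1 -S_0 =n_2 -(S_0 -S_{\tau_1 -1})$ with $S_0 -S_{\tau_1 -1}\in (0,n_2 ]$; so I must show that with uniform probability the walk sits no deeper than $n_2 -\eta_0$ below its starting level at the instant of that first up-step. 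I would obtain this by following the walk down through its tiny $|n_1|$-sized $\sfe_1$-steps and estimating the (rescaled) level it occupies by the time an up-step occurs within distance $n_2$ of $S_0$: using the uniform non-degeneracy and exponential tails of the increment law in \eqref{eq:Vi-vert}, \eqref{eq:p-U-1} — equivalently, the representation \eqref{eq:Rk-repr} of $\sfR_k$ as a dense string of degenerate axis steps thinned by a sparse, uniformly non-degenerate $\calY$-valued walk, plus a Lemma~\ref{lem:V-walk}-type local estimate, and absorbing the $\bigof{{\rm e}^{-2\beta}}$ non-basic and the big $4\sfe_1-\sfe_2$ down-jumps — that level is uniformly spread on scale $n_2$, so it avoids the bottom sliver with probability bounded away from $0$. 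Feeding $\eta_0 ,p_0$ into the first paragraph's display with $\sfs\cdot\sfn\leq 2\,\dd_\sfn (\sfs )$ yields \eqref{eq:Nplus-2}.

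The hard part will be exactly this uniform overshoot estimate in part (b) for near-vertical $\sfn$, where the effective walk lives in its Poissonian short-scale regime (rare $\approx n_2$-sized up-jumps against a slow accumulation of tiny $|n_1|$-sized down-steps, and the exponentially rarer $\approx n_2$-sized down-jumps): Gaussian heuristics fail on the scales at hand, and one must argue quantitatively that the pre-crossing level is non-degenerately distributed on scale $n_2$. Everything else — the reduction through Lemma~\ref{lem:Np-k}, the trivial $N^+_m\leq m$, the passage between $\sfs\cdot\sfn$ and $\dd_\sfn (\sfs )$, and the absorption of non-basic steps via \eqref{eq:non-basic} — is routine.
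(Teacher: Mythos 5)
Your reduction through Lemma~\ref{lem:Np-k}, the extra use of $N^+_m\leq m$ to recover the outer $\wedge\, m$, and your treatment of part (a) (take $\eta=1$, $p={\rm e}^{-b_\epsilon}$ from the event that the very first step is $\sfe_2$) all coincide with the paper's argument. The gap is in part (b), and it is exactly the step you defer as ``the hard part'': the uniform lower bound $\bbP_\beta^{\sfh_\epsilon}\lb H_1-S_0\geq \eta_0\rb\geq p_0$ is the entire content of the claim, and the mechanism you propose for it is not only unexecuted but rests on a false premise. The assertion that the pre-crossing level $S_0-S_{\tau_1-1}$ is ``uniformly spread on scale $n_2$'' fails in general because the walk lives, up to the rare $\Gamma_3$ and non-basic steps, on the one-dimensional lattice generated by $\sfe_1\cdot\sfn=-\epsilon_\sfn\abs{\sfn}_1$ and $\sfe_2\cdot\sfn=(1-\epsilon_\sfn)\abs{\sfn}_1$, and these two increments can be commensurate. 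At $\arg(\sfn)=\frac{3\pi}{4}$ they are equal in modulus, so $S_0-S_{\tau_1-1}$ takes essentially only the two values $0$ and $n_2$: there is a macroscopic atom sitting precisely in the ``bottom sliver'' you want to exclude, and the probability of that sliver is of order one, not $\eta_0/n_2$. The desired conclusion survives there only because the complementary atom at $0$ also has probability of order one, which is not something an anti-concentration argument delivers.

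The paper avoids all of this by exhibiting an explicit sub-event rather than analyzing the law of $S_{\tau_1-1}$. With $\sfn=\abs{\sfn}_1(-\epsilon_\sfn,1-\epsilon_\sfn)$ and $N=\lceil\frac{1-\epsilon_\sfn}{2\epsilon_\sfn}\rceil$, any trajectory consisting of $n\leq N-1$ successive $\sfe_1$-steps followed by one $\sfe_2$-step has $\tau_1=n+1$ and overshoot $H_1-S_0\geq \frac{1-\epsilon_\sfn}{2}\abs{\sfn}_1\df\eta_+$, which is bounded below by a universal constant on $\arg(\sfn)\in(\frac{\pi}{2},\frac{3\pi}{4}]$. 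Summing the geometric series gives
$p_+\geq \sum_{n=0}^{N-1}{\rm e}^{-na_\epsilon}{\rm e}^{-b_\epsilon}=\frac{{\rm e}^{-b_\epsilon}}{1-{\rm e}^{-a_\epsilon}}\lb 1-{\rm e}^{-Na_\epsilon}\rb$,
and the two asymptotic inputs from Proposition~\ref{prop:aep-bep} and \eqref{eq:G1-4Pcobtr} --- namely $Na_\epsilon\geq c>0$ (since $\epsilon\geq\epsilon_\sfn$ and $a_\epsilon\eqvs\epsilon\vee{\rm e}^{-\beta}$) and ${\rm e}^{-b_\epsilon}\geq\frac12\lb 1-{\rm e}^{-a_\epsilon}\rb$ --- make $p_+$ uniformly positive. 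This two-line computation is precisely the quantitative resolution of the competition between the small up-step probability ${\rm e}^{-b_\epsilon}$ and the large number $\sim 1/\epsilon_\sfn$ of admissible down-drifting prefixes; your write-up needs this (or an equivalent) to be a proof.
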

In order to formulate consequences of Lemma~\ref{lem:Np-k}
for expectations of $N^-$
let us introduce
the following notation: For $\sfs\cdot\sfn >0$ let
\be
\label{eq:ls}
\ell_\sfn (\sfs ) = \inf\lbr \ell > 0: (\sfs +\ell\sfe_1 )\cdot \sfn < 0\rbr .
\ee
\begin{lemma}
 \label{lem:Nminus}
  (a) If $\arg (\sfn ) = \frac{\pi}{2}$, then
%the bound \eqref{eq:Holder-AD-OZ1} is satisfied for ladder heights with
\be
\label{eq:Nminus-1}
\lb\bbE_\beta^{\sfh_\epsilon}
N^-_{m} (\sfs\cdot\sfn )^k\rb^{\frac{1}{k}} \leq
c_k  {\rm d}_\sfn (\sfs ) \wedge m,
\ee
uniformly
%$\arg (\sfn  )\in [\frac{\pi}{2} , \frac{3\pi}{4}]$,
 in $\sfs$ in question and in
all $\beta$ sufficiently large.
\newline
(b) On the other hand, the bound
\be
\label{eq:Nminus-2}
\lb\bbE_\beta^{\sfh_\epsilon}
N^-_{m} (\sfs\cdot\sfn )^k\rb^{\frac{1}{k}} \leq
c_k \ell_\sfn (\sfs)\wedge \lb {\rm d}_\sfn (\sfs ){\rm e}^{b_\epsilon} \rb \wedge m ,
\ee
is satisfied
uniformly in $\arg (\sfn ) \in (\frac{\pi}{2} , \frac{3\pi}{4}]$,
in $\sfs$ in question
 and
all $\beta$ sufficiently large.
\end{lemma}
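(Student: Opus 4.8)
The plan is to deduce both bounds from the one–dimensional ladder–height estimate of Lemma~\ref{lem:Np-k}, applied to the effective walk $S_\ell=\sfR_\ell\cdot\sfn$ under $\bbP_\beta^{\sfh_\epsilon}$ with $\sfh_\epsilon=\nabla\tau_\beta(\sfv-\sfu)$, combined with the deterministic inequality $N^-_m(z)\le m$ (each step creates at most one new descending ladder height) and the elementary geometric comparisons: $\sfs\cdot\sfn$ is comparable to $|\sfn|_1\dd_\sfn(\sfs)$, and $\ell_\sfn(\sfs)=\lfloor(\sfs\cdot\sfn)/(|\sfn|_1\epsilon_\sfn)\rfloor+1$. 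To apply Lemma~\ref{lem:Np-k} one only needs, for suitable $\eta,p>0$, the inequality $\bbP_\beta^{\sfh_\epsilon}(S_0-\hat H_1\ge\eta;\ \hat\tau_1<\infty)\ge p$, and the cheap way to produce it is to force the \emph{first} effective step to be an appropriate downward step $\sfy\in\frS_0$ (recall \eqref{eq:BasicProb}): this makes $\hat\tau_1=1$ and pins the first ladder increment to $-\sfy\cdot\sfn$. Because these events are unconditional, the $\bigof{{\rm e}^{-2\beta}}$ mass carried by non–basic steps (\eqref{eq:non-basic}) never enters; each application of Lemma~\ref{lem:Np-k} then outputs $(\bbE_\beta^{\sfh_\epsilon}N^-_m(\sfs\cdot\sfn)^k)^{1/k}\le c_k(\lceil(\sfs\cdot\sfn)/\eta\rceil/p)$, which together with $N^-_m\le m$ yields a bound of the form $c_k(\lceil(\sfs\cdot\sfn)/\eta\rceil/p)\wedge m$.

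First I would treat part (a), $\sfn=\sfe_2$: here $S_\ell$ is $\bbZ$–valued and every downward step has $\sfn$–component $\le-1$, so $S_0-\hat H_1\ge1$ on $\{\hat\tau_1<\infty\}$ and one may take $\eta=1$, $p=\bbP_\beta^{\sfh_\epsilon}(\hat\tau_1<\infty)$. The standing assumption \eqref{eq:nu-uv-cond} (here $\sfn^\perp=\sfe_1$) together with \eqref{eq:vstar} makes the drift $\bbE_\beta^{\sfh_\epsilon}Z_1=\sfv^*_\epsilon\cdot\sfe_2=\bigof{{\rm e}^{-\nu\beta}}$ with $\nu>1$, while by \eqref{eq:BasicProb} and Proposition~\ref{prop:aep-bep} both the $+1$ step $\sfe_2$ and the $-1$ step $4\sfe_1-\sfe_2$ occur with probability $\eqvs{\rm e}^{-\beta}$, so the increment variance is $\gtrsim{\rm e}^{-\beta}\gg{\rm e}^{-\nu\beta}$; a diffusive estimate over a window of size ${\rm e}^{(1+\nu)\beta/2}$ then gives $p\ge\tfrac12$ for $\beta$ large, and Lemma~\ref{lem:Np-k} produces \eqref{eq:Nminus-1}. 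For part (b) the factor $\wedge m$ is the deterministic bound, and the factor $\wedge\ell_\sfn(\sfs)$ comes from the step $\sfe_1$: since $\sfe_1\cdot\sfn=-|\sfn|_1\epsilon_\sfn$ and $\bbP_\beta^{\sfh_\epsilon}(\sfX=\sfe_1)={\rm e}^{-a_\epsilon}\ge{\rm e}^{-c}$ uniformly (by \eqref{eq:aeps}, $a_\epsilon\le c$), Lemma~\ref{lem:Np-k} with $\eta=|\sfn|_1\epsilon_\sfn$, $p={\rm e}^{-a_\epsilon}$ gives $(\bbE_\beta^{\sfh_\epsilon}N^-_m(\sfs\cdot\sfn)^k)^{1/k}\le c_k\lceil(\sfs\cdot\sfn)/(|\sfn|_1\epsilon_\sfn)\rceil\eqvs c_k\ell_\sfn(\sfs)$.

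The hard term is $\wedge(\dd_\sfn(\sfs){\rm e}^{b_\epsilon})$, and here I would split on the size of $\epsilon_\sfn$. If $\epsilon_\sfn\ge{\rm e}^{-\beta}$ then \eqref{eq:ep-epn}, \eqref{eq:aeps} and \eqref{eq:uep-cases} give $\epsilon\eqvs\epsilon_\sfn$, $a_\epsilon\eqvs\epsilon_\sfn$ and ${\rm e}^{b_\epsilon}\eqvs1/\epsilon_\sfn$, so $\ell_\sfn(\sfs)\eqvs\dd_\sfn(\sfs){\rm e}^{b_\epsilon}$ and the bound just obtained via $\ell_\sfn$ already delivers $c_k\dd_\sfn(\sfs){\rm e}^{b_\epsilon}$. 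If $\epsilon_\sfn<{\rm e}^{-\beta}$ then $\epsilon\le\epsilon_\sfn+{\rm e}^{-\nu\beta}<2{\rm e}^{-\beta}$, so \eqref{eq:aeps}, \eqref{eq:uep-cases} and \eqref{eq:uep-upper} force $a_\epsilon\eqvs{\rm e}^{-\beta}$ and $\beta-b_\epsilon=\bigof1$, whence $\Delta_\epsilon=4a_\epsilon+(\beta-b_\epsilon)=\bigof1$ and ${\rm e}^{-\beta-\Delta_\epsilon}\eqvs{\rm e}^{-\beta}\eqvs{\rm e}^{-b_\epsilon}$; then the macroscopic downward step $4\sfe_1-\sfe_2$, with $-(4\sfe_1-\sfe_2)\cdot\sfn=|\sfn|_1(1+3\epsilon_\sfn)\ge1$ and probability $\eqvs{\rm e}^{-b_\epsilon}$, lets Lemma~\ref{lem:Np-k} apply with $\eta=|\sfn|_1(1+3\epsilon_\sfn)$ and $p\eqvs{\rm e}^{-b_\epsilon}$, giving $(\bbE_\beta^{\sfh_\epsilon}N^-_m(\sfs\cdot\sfn)^k)^{1/k}\le c_k\lceil(\sfs\cdot\sfn)/|\sfn|_1\rceil{\rm e}^{b_\epsilon}\eqvs c_k\dd_\sfn(\sfs){\rm e}^{b_\epsilon}$. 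Taking the minimum of the four bounds yields \eqref{eq:Nminus-2}. I expect this last step to be the main obstacle: it is precisely the \emph{sharp} asymptotics of Proposition~\ref{prop:aep-bep} — namely that $\Delta_\epsilon$ stays of order one in the exponentially–small–$\epsilon$ regime — that rescues the factor ${\rm e}^{b_\epsilon}$, since the crude bound $\beta+\Delta_\epsilon=2\beta-b_\epsilon+4a_\epsilon$ for the exponent of $\bbP_\beta^{\sfh_\epsilon}(\sfX=4\sfe_1-\sfe_2)$ would only produce the far too weak ${\rm e}^{2\beta-b_\epsilon}$.
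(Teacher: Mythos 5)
Your proof is correct and follows essentially the same route as the paper: both arguments reduce to Lemma~\ref{lem:Np-k} by exhibiting, among the basic steps \eqref{eq:BasicProb}, a mechanism producing substantial descending ladder heights (the $\sfe_1$-step for the $\ell_\sfn(\sfs)$ term, the step $4\sfe_1-\sfe_2$ for the other term), and both recover the factor ${\rm e}^{b_\epsilon}$ in \eqref{eq:Nminus-2} via the same dichotomy $\epsilon_\sfn\gtrless {\rm e}^{-\beta}$, using that $\Delta_\epsilon=\bigof{1}$ when $\epsilon$ is of order ${\rm e}^{-\beta}$ or smaller. The only (harmless) divergence is in part (a), where the paper gets an order-one probability of a unit descending ladder increment directly from a run of $\Gamma_1$-steps terminated by a $\Gamma_3$-step, whereas you invoke a diffusive/CLT estimate to show $\bbP_\beta^{\sfh_\epsilon}\lb \hat\tau_1<\infty\rb\geq \tfrac12$; both are valid.
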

}

{
\begin{proof}
 Consider $\arg (\sfn ) = \frac{\pi}{2}$.
 The claim (a) of Lemma~\ref{lem:Nplus}
 is a direct consequence of \eqref{eq:BasicProb}.
 {Indeed, we will see about ${\rm e}^{b_\epsilon}$    steps of
 the type $\Gamma_1$ before seeing anything else, since the
  $\Gamma_2$-step
 has probability ${\rm e}^{-b_\epsilon}$. Once happened, $\Gamma_2$-step gives rise
 to a
 substantial ladder height with $\eta_+ =1$.}
  \newline
   Similarly,
 a $\Gamma_3$ step which follows a sequence of
 $\Gamma_1$ steps, which has probability of order one,
 gives rise to  {the first}  strict descending ladder height of size
 $\eta_- = 1$, and claim (a) of Lemma~\ref{lem:Nminus}  follows as well.
 \newline
 We need to explain claims (b) in both Lemmas.
 Assume that
 $\sfn  = \abs{\sfn}_1(-\epsilon_\sfn , 1-\epsilon_\sfn )$
 satisfies $\arg (\sfn ) \in (\frac{\pi}{2} , \frac{3\pi}{4}]$.
 This means that $\epsilon_\sfn >0$.    Then a $\Gamma_2$-step, which follows
at most $\frac{1-\epsilon_\sfn}{2\epsilon_\sfn}$ successive $\Gamma_1$-steps, gives rise to
a substantial increment with ${\eta}_+  =\frac{1-\epsilon_\sfn}{{2}}\abs{\sfn}_1$.
The latter is bounded below
by
$c_2 >0$
%$1/4\sqrt{2}$
uniformly in $\arg (\sfn )\in \lb \frac{\pi}{2} , \frac{3\pi}{4}\right]$.
Set $N = \lceil\frac{1-\epsilon_\sfn}{2\epsilon_\sfn}\rceil$.
Then,
 \be
\label{eq:p-good}
 p_+ = p_+  (\beta ,\sfh  ) \geq {\rm e}^{-b_\epsilon (\beta )}\sum_{n=0}^{N-1}
{\rm e}^{-n a_\epsilon (\beta )}
=
\frac{{\rm e}^{-b_\epsilon (\beta )}}{1- {\rm e}^{- a_\epsilon (\beta )}}
\lb 1- {\rm e}^{-N a_\epsilon (\beta )}\rb ,
\ee
and $\eta_+ = \eta_+  (\beta ,\epsilon  ) ={(1-\epsilon_\sfn )/2\ge 1/(4\sqrt 2)}$.
Let us develop a more explicit bound for the quantity on the right-hand side of
\eqref{eq:p-good}.
 Since $\epsilon \geq \epsilon_\sfn$,
$N\geq \lceil \frac{1-\epsilon}{2\epsilon} \rceil$.  Therefore, by \eqref{eq:aeps},
\[
Na_\epsilon  (\beta ) \geq c_3
\lceil \frac{1-\epsilon}{2\epsilon}\rceil
{
\lb  \epsilon\vee {\rm e}^{-\beta}\rb} \geq c_4 >0 ,
\]
 uniformly in all the situations in question.
 \newline
 On the other hand, by \eqref{eq:G1-4Pcobtr} and \eqref{eq:uep-cases},
${\rm e}^{-b_\epsilon (\beta )}\geq \lb 1- {\rm e}^{-a_\epsilon (\beta )}\rb/2$ also
uniformly in all the situations in question.
\newline
We conclude: There exists $c_4 >0$ such that
the right hand side of \eqref{eq:p-good} is bounded below by $c_4$
uniformly in $\arg (\sfn ) \in [\frac{\pi}{2} , \frac{3\pi}{4}]$,
$\epsilon >\epsilon_\sfn$ and $\beta$ large.
\newline
Let us turn to the claim (b) of Lemma~\ref{lem:Nminus}.
 A $\Gamma_3$ step of the effective random walk,
 which has probability ${\rm e}^{-\beta -\Delta_\epsilon}$ gives rise to a
 strict descending ladder height of size at least $1$. On the other hand
 since $\epsilon_\sfn >0$,  a horizontal
 $\Gamma_1$ step, which has probability ${\rm e}^{-a_\epsilon}\eqvs 1$ also
 gives rise to a strict descending ladder height. The quantity
 $\ell_\sfn (\sfs )$ in \eqref{eq:ls}  describes maximal possible number of
 such $\Gamma_1$-ladder epochs.  Therefore, \eqref{eq:Np-k} implies:
 \be
 \label{eq:Nminus-2-D}
  \lb\bbE_\beta^{\sfh_\epsilon}
N^-_{m} (\sfs\cdot\sfn )^k\rb^{\frac{1}{k}} \leq  c_k \ell_{\sfn} (\sfs )\wedge \lb {\rm e}^{\beta +\Delta_\epsilon} {\rm d}_\sfn (\sfs )
\rb \wedge m
 \ee
 Note that $\ell_\sfn(\sfs)\leq \lceil \frac{\sfs\cdot\sfn }{\epsilon_\sfn }\rceil$.
 Clearly $\sfs\cdot \sfn\leq {\rm d}_\sfn (\sfs )$.
 If, in addition,  $\epsilon_\sfn > {\rm e}^{-\beta}$, then
 $\epsilon_\sfn \geq \epsilon/2 \geq c_5 {\rm e}^{-b_\epsilon}$,
 as it follows from \eqref{eq:uep-cases}, \eqref{eq:G1-New} and
 \eqref{eq:ep-epn}.
 Therefore, $\ell_\sfn (\sfs ) \leq  c_6 {\rm d}_\sfn (\sfs ) {\rm e}^{b_\epsilon }$ whenever
 $\epsilon_\sfn > {\rm e}^{-\beta}$.
 \newline
 On the other hand, if
 $\epsilon_\sfn \leq {\rm e}^{-\beta}$, then $\epsilon \leq 2{\rm
   e}^{-\beta}$ (cf. \eqref{eq:ep-epn})
 and, consequently, ${\rm e}^{-\beta - \Delta_\epsilon }\geq c_8 {\rm
   e}^{-b_\epsilon}$
(recall the definition of $\Delta_\epsilon$ in Section \ref{sub:steps} and use \eqref{eq:aeps}-\eqref{eq:uep-cases}).
 \newline
 In both cases \eqref{eq:Nminus-2-D} implies \eqref{eq:Nminus-2}.
\end{proof}
}

\section{Proof of Proposition~\ref{lem:Plus-bounds} and Proposition~\ref{prop:P-plus-bounds}}
\label{sec:provaprop10}
Recall the definition of $\ell_\sfn$ in \eqref{eq:ls}. For the rest of this
Section set
\be
\label{eq:lu}
\ell_\sfw = \ell_\sfn (\sfw )  \wedge {\rm e}^{b_\epsilon}
= \inf\lbr \ell : \sfw +\ell\sfe_1 \not\in \calH_{+ , \sfn }\rbr
\wedge {\rm e}^{b_\epsilon}.
\ee
Note that in the case of the   {horizontal wall} $\arg (\sfn )  = \frac{\pi}{2}$,  $\ell_\sfw \equiv {\rm e}^{b_\epsilon}$
(for all $\sfw\in\calH_{+ ,\sfn }$).

Proofs of Proposition~\ref{prop:P-plus-bounds} (in Subsection~\ref{sub:proofPx})
 and of the
target bounds on $a_\delta$ and $b_\delta$
 of Proposition~\ref{lem:Plus-bounds} (in Subsection~\ref{sub:proofPlus-bounds})
hinge on careful lower, respectively upper,  bounds on quantities
(see \eqref{eq:RecQuant-P})
$\overline{\bbP}_{\beta , +}^{\sfh_\epsilon , \delta} (\sfu , \sfv )$ and
 $\underline{\bbP}_{\beta , +}^{\sfh_\epsilon , \delta} (\sfw , \sfz )$,
which we proceed to derive in Subsections~\ref{sub:lower} and \ref{sub:upper}.
%\paragraph{\bf Lower bound on  $\bbP_{\beta, + }^{\sfh , \delta} (\sfu, \sfv )$.}
\subsection{Lower bounds.}
\label{sub:lower}
Let $\ell\in\bbN$ and set $\sfu_\ell = \sfu +\ell\sfe_1$.
Then,
\be
\label{eq:step1-lb}
\bbP_{\beta , +}^{\sfh_\epsilon} \lb \sfu , \sfv\rb \geq {\sum_   {\ell=0} ^{\ell_\sfu -1}}
{\rm e}^{-\ell a_\epsilon} \hat\bbP_{\beta , +} \lb 0, \sfv - \sfu_\ell\rb
\geq c_1
%\frac{1 - {\rm e}^{-\ell_\sfu a_\epsilon}}{1 - {\rm e}^{-a_\epsilon}}
\frac{\ell_\sfu {\rm e}^{-\ell_\sfu a_\epsilon}}{\abs{\sfv - \sfu}_1}
\min_{\ell\leq \ell_\sfu}
\bbP_\beta^{\sfh_\epsilon} \lb 0, \sfv - \sfu_\ell\rb .
\ee
Above we considered random walks {which,   first, make} $\ell$ horizontal steps and
then start climbing to $\sfv$, and
relied on $\ell_\sfu \ll\abs{\sfv - \sfu}_1$ (see
\eqref{eq:nu-uv-cond} and recall $b_\epsilon\le \beta, \nu>1$) and \eqref{eq:AD-expBound}.

\paragraph{\bf Curvature of $\pKb$ and lower bound on
$\bbP_\beta^{h_\epsilon} \lb 0, \sfv - \sfu_\ell\rb$.}
We already have a good estimate  \eqref{eq:LB-P}  on $\bbP_\beta^{\sfh_{\epsilon_\ell}}
\lb 0, \sfv - \sfu_\ell\rb$ for  $\sfh_{\epsilon_\ell}\df \nabla\tau_\beta (\sfv -\sfu_\ell )$.
 Here we make changes which are needed to take into account  the
 discrepancy between $\sfh_{\epsilon_\ell}$ and such $\sfh_\epsilon$.

Let us start with some general considerations: Assume that ${\mathbf K} \subset \bbR^2$ is a
convex compact
set with a smooth strictly convex boundary $\partial {\mathbf K}$ which is parametrized
by the direction of the exterior normal $\sfm (\theta ) = \lb \cos \theta , \sin \theta\rb$;
\[
 \partial {\mathbf K} = \lbr \sfh (\theta )\, ;\, \theta \in [0, 2\pi )\rbr .
\]
Then, expanding for $\theta$ in a small neighbourhood of $\theta_0$,
\be
\label{eq:h-exp}
\begin{split}
&\lb \sfh (\theta ) - \sfh (\theta_0 )\rb\cdot \sfm (\theta )  =
\lb \theta -\theta_0\rb \int_0^1 \sfh^\prime (\theta_t )\cdot \sfm (\theta )\dd t = \\
&\qquad
\lb \theta -\theta_0\rb \int_0^1 \sfh^\prime (\theta_t ) \cdot\lb \sfm (\theta )-  \sfm (\theta_t )\rb \dd t
\leq \frac{ (\theta -\theta_0 )^2}{2} \max_{t\in [0,1]} \abs{ \sfh^\prime (\theta_t )}
\end{split}
\ee
where $\theta_t = \theta_0 +t (\theta - \theta_0 )$.  Above we used $\sfh^\prime (\theta_t )
\cdot \sfm (\theta_t )\equiv 0$.

Since the support function
$\tau$ of ${\mathbf K}$ is given by $\tau (\theta ) = \sfh (\theta ) \cdot \sfm (\theta )$,
the radius of curvature  $r (\theta )$ is given by
\be
\label{eq:radius}
r (\theta ) =  \tau^{\prime\prime} (\theta ) +\tau (\theta ) = \sfh^\prime (\theta )
\cdot \sfm^{\perp} (\theta ) =
\abs{
\sfh^{\prime} (\theta )}
%(\theta )\cdot \sfn (\thea ).
\ee
where $\sfm^\perp (\theta ) = (-\sin\theta  , \cos \theta  )$. The curvature of
$\partial {\mathbf K}$ at $\theta$ is $\chi (\theta ) = r (\theta )^{-1}$.
In this notation \eqref{eq:h-exp} reads:
\be
\label{eq:h-exp-curv}
\lb \sfh (\theta ) - \sfh (\theta_0 )\rb\cdot \sfm (\theta )
\leq \frac{(\theta - \theta_0 )^2}{2\min_t \chi (\theta_t )} .
\ee
Next, assume that in a neighbourhood of $\theta_t$ the boundary
$\partial {\mathbf K}$ is given by an implicit equation $ F (\sfh ) =
0 $: then,
\be
\label{eq:implicit}
\chi ( \sfh_t ) = \chi (\theta_t ) =
 \frac{[
 {\rm Hess}F(\sfh_t)\sfm^{\perp} (\theta_t )]
 \cdot\sfm^{\perp} (\theta_t )}{|\nabla F (\sfh_t ) |} .
\ee
\smallskip

Going back to $\bbP_\beta^{h_\epsilon} \lb 0, \sfv - \sfu_\ell\rb$
define $\epsilon_\ell$ via $\sfv - \sfu_\ell = \abs{\sfv - \sfu_\ell}_1\lb
 1-\epsilon_\ell , \epsilon_\ell \rb $ and set $\sfh_{\epsilon_\ell} =
 \nabla\tau_\beta ( 1-\epsilon_\ell , \epsilon_\ell ) = \nabla\tau_\beta (\sfv -\sfu_\ell )$.
 Since $\abs{\sfv - \sfu_\ell}_1\eqvs \abs{\sfv - \sfu }_1$
 and $b_\epsilon \eqvs b_{\epsilon_\ell}$, the local limit result
 \eqref{eq:LB-P} implies:
 \be
 \label{eq:change-h}
 \bbP_\beta^{\sfh_\epsilon} \lb 0, \sfv - \sfu_\ell\rb \eqvs \frac{1}{\sqrt{
 \abs{\sfv - \sfu}_1 {\rm e}^{-b_{\epsilon}}}}
 {\rm e}^{ (\sfh_\epsilon - \sfh_{\epsilon_\ell} )\cdot
 (\sfv - \sfu_\ell )} .
 \ee
{ The extra factor here comes from the difference between the distributions $\bbP_\beta^{\sfh_\epsilon}$ and $\bbP_\beta^{\sfh_{\epsilon_\ell}}$.} Define $
\sfm (\theta_{\epsilon_\ell} )
= (1-\epsilon_\ell , \epsilon_\ell )/
\|{(1-\epsilon_\ell , \epsilon_\ell )}\|_2$. By construction $\sfm (\epsilon_\ell )$
is the unit exterior normal to $\pKb$ at $\sfh_{\epsilon_\ell}$. Since
\[
 (\sfh_{\epsilon} - \sfh_{\epsilon_\ell} )\cdot
 (\sfv - \sfu_\ell ) \eqvs \abs{\sfv - \sfu}_1
 (\sfh_{\epsilon} - \sfh_{\epsilon_\ell} )\cdot\sfm (\theta_{\epsilon_\ell} ) ,
\]
and since $\abs{\epsilon - \epsilon_\ell}\eqvs \abs{\theta_\epsilon -\theta_{\epsilon_\ell}}$
we may rely on \eqref{eq:h-exp-curv}.

In order to derive  lower bounds on $\chi (h_{\epsilon_\ell})$ we shall rely on
\eqref{eq:implicit}: The  boundary $\pKb$ in a neighbourhood of $\sfh_{\epsilon_\ell}$ is parametrized as
\[
 \sfh\in \pKb\, \Leftrightarrow
 \log \bbE_\beta^{\sfh_{\epsilon_\ell}}{\rm e}^{ (\sfh - \sfh_{\epsilon_\ell} )\cdot\sfX (\Gamma )}
 \df F (\sfh )
 =0 .
\]
Note first of all that
\[
 |\nabla F (\sfh_{\epsilon_\ell} ) | =
 \abs{
 \bbE_\beta^{\sfh_{\epsilon_\ell}} \sfX (\Gamma )
} \eqvs 1.
\]
On the other hand, for any $\sfv$,
\be
\label{eq:Hess-bound}
\begin{split}
{\rm Hess} F (\sfh_{\epsilon_\ell} )\sfv\cdot\sfv &=
{\mathbb V}{\rm ar}_\beta^{\sfh_{\epsilon_\ell}} \lb \sfX (\Gamma )\cdot \sfv \rb
= \min_x \bbE_\beta^{\sfh_{\epsilon_\ell}} \lb \sfX (\Gamma )\cdot \sfv -x \rb^2 \\
&\geq \min_x \lbr {\rm e}^{-a_{\epsilon_\ell}}\lb \sfe_1\cdot\sfv - x\rb^2
+ {\rm e}^{-b_{\epsilon_\ell}}\lb \sfe_2\cdot\sfv - x\rb^2 \rbr.
\end{split}
\ee
Substituting $\sfv = \sfm^{\perp} ( \theta_{\epsilon_\ell} ) = \lb -\epsilon_\ell , 1-\epsilon_\ell \rb/
\|{(1-\epsilon_\ell , \epsilon_\ell )}\|_2$, we conclude:
\[
 \chi_\beta (\sfh_{\epsilon_\ell} ) \ge c {\rm e}^{-b_{\epsilon_\ell}}  \eqvs {\rm e}^{-b_{\epsilon}} .
\]
Hence,
\[
 (\sfh_{\epsilon} - \sfh_{\epsilon_\ell} )\cdot
 \sfm (\theta_{\epsilon_\ell} )
 %(1 -\epsilon_\ell , \epsilon_\ell )
 \geq -
 c_2
 {\rm e}^{b_\epsilon}  \lb \epsilon - \epsilon_\ell \rb^2 .
\]
However, $0 \leq \epsilon_\ell - \epsilon \leq c_3 {\rm d}_\sfn (\sfu )/\abs{\sfv - \sfu }_1$.
Recalling that ${\rm d}_\sfn (\sfu ) \leq {\rm e}^{-\nu \beta}\abs{\sfv - \sfu }_1$,
we infer:
\be
\label{eq:curv-bound}
  \bbP_\beta^{\sfh_\epsilon} \lb 0, \sfv - \sfu_\ell\rb \geq c_4
  \frac{{\rm e}^{-c_5{\rm e}^{b_\epsilon - \nu\beta} {\rm d}_\sfn (\sfu  )}}{
  \sqrt{\abs{\sfv - \sfu}_1 {\rm e}^{-b_\epsilon}}} .
\ee
\paragraph{\bf Lower bound on  $\bar\bbP_{\beta, + }^{\sfh_\epsilon , \delta} (\sfu, \sfv )$.}
Since $\nu >1$, ${\rm e}^{b_\epsilon - \nu\beta} \ll \delta\beta $ for all $\beta$
sufficiently large.
Putting things together we derive from \eqref{eq:step1-lb} and \eqref{eq:curv-bound}:
\be
\label{eq:target-lb-uv}
\bar\bbP_{\beta , +}^{\sfh_\epsilon , \delta} \lb \sfu , \sfv\rb
\geq
c_6\frac{{\rm e}^{\beta\delta\lb \frac{ {\rm d}_\sfn (\sfu )}{2} +{\rm d}_\sfn (\sfv )\rb} \ell_\sfu {\rm e}^{-\ell_\sfu a_\epsilon}
}{\abs{\sfv - \sfu}_1 \sqrt{\abs{\sfv - \sfu}_1 {\rm e}^{-b_\epsilon}}}
\geq
c_7 \frac{{\rm e}^{\beta\delta\lb \frac{ {\rm d}_\sfn (\sfu )}{2} +
\frac{
{\rm d}_\sfn (\sfv )}
{2}
\rb
}
\ell_\sfu
}{\abs{\sfv - \sfu}_1 \sqrt{\abs{\sfv - \sfu}_1 {\rm e}^{-b_\epsilon}}} .
\ee
In the last inequality we used  {$a_\epsilon \ell_\sfu\leq a_\epsilon {\rm e}^{b_\epsilon}\leq c_8$,} as
it follows from \eqref{eq:aeps} and \eqref{eq:uep-cases}.
\begin{remark}
 \label{rem-lb-pi}
 Note that if $\arg (\sfn ) = \frac{\pi}{2}$, then
 $\ell_\sfu = {\rm e}^{b_\epsilon}$. Consequently in the latter case:
 \be
\label{eq:target-lb-uv-pi}
\bar\bbP_{\beta , +}^{\sfh_\epsilon , \delta} \lb \sfu , \sfv\rb
\geq
c_7 \frac{{\rm e}^{\beta\delta\lb \frac{ {\rm d}_\sfn (\sfu )}{2} +
\frac{
{\rm d}_\sfn (\sfv )}
{2}
\rb
}
}{ \lb \sqrt{\abs{\sfv - \sfu}_1 {\rm e}^{-b_\epsilon}} \, \rb^3} .
\ee
\end{remark}

\subsection{Upper bounds.}
\label{sub:upper}
Upper bounds are, naturally, more involved. We must explore all the terms in
\eqref{eq:Ruv-decomp}. In doing so we shall rely on \eqref{eq:AD-expBound-ub},
\eqref{eq:Holder-AD-OZ}
and the claims of Lemma~\ref{lem:Nplus} and Lemma~\ref{lem:Nminus}.

By \eqref{eq:Ruv-decomp}
one can fix  $c_1$, such that:
\be
\label{eq:two-terms-ub}
\begin{split}
&\bbP_{\beta ,+}^{\sfh_\epsilon} \lb
%\frR^{+ ,\sfn} (
\sfw , \sfz
%)
\rb =
%\sum_\ell
%\bbP_\beta^\sfh \lb \frR_\ell^{+ ,\sfn} (\sfw , \sfz )\rb =
\sum_{{\sfy\cdot \sfn \geq 0}} \hat \bbP_{\beta , +}^{
\sfh_\epsilon
%}
}
\lb \sfw - \sfy , 0
\rb
\bbP_{\beta ,+}^{\sfh_\epsilon} \lb
%\frR^{+ ,\sfn} (
0, \sfz -\sfy
% )
\rb \\
&\leq
\sum_{\abs{\sfy - \sfw}_1\leq c_1\abs{\sfz - \sfw}_1}
\hat \bbP_{\beta , +}^{
\sfh_\epsilon}
\lb \sfw - \sfy , 0
\rb\max_{2\abs{\sfz - \sfy}_1\geq \abs{\sfz - \sfw}_1}
\bbP_{\beta ,+}^{\sfh_\epsilon} \lb 0, \sfz -\sfy
% )
\rb\\
&+
\max_{2\abs{\sfy- \sfw}_1\geq \abs{\sfz - \sfw}_1}
\hat \bbP_{\beta ,+}^{\sfh_\epsilon} \lb \sfw -\sfy , 0
% )
\rb
\sum_{\abs{\sfy - \sfz}_1\leq c_1\abs{\sfz - \sfw}_1}
\bbP_{\beta , +}^{
\sfh_\epsilon}
\lb 0, \sfz - \sfy
\rb .
\end{split}
\ee
{
Note that {the following two functions of the effective random walk $S_{\ell}$ coincide:}
\[
 \sum_{\sfy\cdot \sfn\geq 0} \1_{\lbr \frA (\sfw , \sfy )\cap \calL^-_\sfy\rbr} = N^- ( \sfw \cdot\sfn ) ,
\]
 {see \eqref{eq:L-heights}.} As in \eqref{eq:AD-expBound-ub} we may ignore effective trajectories from  $\frA (\sfw , \sfy )$
with more than $c \abs{\sfy -\sfw}$ steps. Hence,
}
 {for some $c_3$}
\be
\label{eq:ENm-bound}
\sum_{\abs{\sfy - \sfw}_1\leq c_1\abs{\sfz - \sfw}_1} \!\!\!\!\!
\hat \bbP_{\beta , +}^{
\sfh_\epsilon}
\lb \sfw - \sfy , 0
\rb
= \!\!\!\!\! \sum_{\abs{\sfy - \sfw}_1\leq c_1\abs{\sfz - \sfw}_1}
\!\!\!\!\!
\bbP_{\beta }^{
\sfh_\epsilon}
\lb \frA (\sfw , \sfy ); \calL^-_\sfy \rb
\leq c_2 \bbE_\beta^{\sfh_\epsilon} \lb N^-_{c_3\abs{\sfz - \sfw}_1}(
\sfw \cdot\sfn )\rb .
\ee
Similarly,
\be
\label{eq:ENp-bound}
\sum_{\abs{\sfz - \sfy}_1\leq c_1\abs{\sfz - \sfw}_1} \!\!\!\!\!
\bbP_{\beta , +}^{
\sfh_\epsilon}
\lb 0, \sfz- \sfy
\rb \leq c_2 \bbE_\beta^{\sfh_\epsilon} \lb N^+_{c_3\abs{\sfz - \sfw}_1}
(\sfz \cdot\sfn )\rb .
\ee
The right hand sides of \eqref{eq:ENm-bound} and \eqref{eq:ENp-bound} are
controlled by Lemma~\ref{lem:Nminus} and, respectively, by Lemma~\ref{lem:Nplus}.
{So what remains is the upper bounds on $\max$-terms in \eqref{eq:two-terms-ub}.}
\paragraph{\bf Upper bounds on $\bbP_{\beta , +}^{\sfh_\epsilon}\lb 0, \sfs \rb$ and
$\hat \bbP_{\beta , +}^{\sfh_\epsilon}\lb \sfs ,0 \rb$.}
Consider the first of \eqref{eq:AD-expBound-ub}.
Set $m = m (\sfs ) = c\abs{\sfs}_1$
and define:
\[
 \calL^+_{\sft , m} = \lbr \exists i~:~\tau_i\leq m\ {\rm and}\  \sfR_{\tau_i} = \sft\rbr .
\]
In this way $\calL_\sft^+$ defined in \eqref{eq:L-heights} is recorded
as $\calL_\sft^+ = \calL^+_{\sft , \infty}$.
%On the other hand,
{  Also,} the number of ladder heights is recorded as $N^+_m (\sfs \cdot \sfn ) =
\sum_{0\leq \sft\cdot\sfn \leq \sfs\cdot\sfn}\1_{\calL^+_{\sft , m}}$.
Then, \eqref{eq:AD-expBound-ub} could be recorded as:
{
\be
\label{eq:decomp-t}
\bbP_{\beta , +}^{\sfh_\epsilon}\lb 0, \sfs \rb \leq \frac{c_1}{\abs{\sfs}_1}
\sum_{0\leq \sft\cdot\sfn \leq \sfs\cdot\sfn}
\bbP_{\beta }^{\sfh_\epsilon}\lb \calL^+_{\sft , m}\rb
\bbP_{\beta }^{\sfh_\epsilon} (\sft , \sfs ).
\ee
}
{
In view of Proposition~\ref{prop:LB-P} we can rely on the following 
large deviation upper bound: There exists $c_4 $ such that
\be
\label{eq:LB-P-ub}
\bbP_\beta^{\sfh_\epsilon} \lb \sfs, \sft \rb \leq
\frac{c_4}{\sqrt{{\rm e}^{-b_\epsilon  }|\sft-\sfs |_1}\vee 1} ,
\ee
uniformly in $\arg (\sfn )\in \left[ \frac{\pi}{2}, \frac{3\pi}{4}\right]$,
$\beta$ large, $\epsilon$ (in $\sfv - \sfu \df  \abs{\sfv - \sfu}_1 (1-\epsilon , \epsilon )$)
satisfying  \eqref{eq:nu-uv-cond} and $\sft -\sfs\in\calY$.
}
{
The bound is sharp for $\sft -\sfs$ pointing
in the (average under $\bbP_\beta^{\sfh_\epsilon}$) direction $\sfv - \sfu$ or close to it.
For other directions it is a crude large deviation bound
}  {(compare with the discussion around the relation \eqref{eq:change-h}).}
We shall split the sum in \eqref{eq:decomp-t} according to the values of $|\sft|_1$:
\smallskip

\noindent
(i) $|\sft|_1 \leq \frac{|\sfs|_1}{2}$. In this case,
\[
 \bbP_{\beta }^{\sfh_\epsilon}  (0, \sfs - \sft ) \leq
\frac{c_6 }{\sqrt{{\rm e}^{-b_\epsilon   }|\sfs |_1\vee 1}} ,
\]
as it follows from \eqref{eq:LB-P-ub}.  On the other hand,
{
\[
 \sum_{\sft:0\leq \sft\cdot \sfn \leq \sfs\cdot \sfn}  \bbP_{\beta }^{\sfh_\epsilon}\lb \calL^+_{\sft , m}\rb =
\bbE_\beta^{\sfh_\epsilon} N^+_m (\sfs\cdot \sfn ).
%{\rm e}^{b_\sfh (\beta )}(1+\dd_\sfn (\sfs )) .
\]
}
Therefore, the total contribution of $|\sft|_1 \leq \frac{|\sfs|_1}{2}$ to
the right hand side of \eqref{eq:decomp-t} is bounded above by
\be
\label{eq:t-bound1}
 c_7 \frac{ \bbE_\beta^{\sfh_\epsilon} N^+_{c\abs{\sfs}_1} (\sfs\cdot \sfn )} {|\sfs |_1 \sqrt{{\rm e}^{-b_\epsilon  }|\sfs |_1\vee 1}} .
\ee
\smallskip

\noindent
(ii) $|\sft|_1 >  \frac{|\sfs|_1}{2}$. Since
$\bbP_{\beta }^{\sfh_\epsilon}\lb \calL^+_{\sft , m}\rb \leq
\bbP_{\beta }^{\sfh_\epsilon}\lb \calL^+_{\sft} \rb =
\bbP_{\beta , +}^{\sfh_\epsilon}\lb 0, \sft \rb$, in this case the first
of \eqref{eq:AD-expBound-ub} implies:
{
\[
 \bbP_{\beta }^{\sfh_\epsilon}\lb \calL^+_{\sft , m}\rb \leq
 \frac{c_8}{\abs{\sfs}_1} \bbE_\beta^{\sfh_\epsilon}\lb \frA  (0, \sft );
 N^+_m (\sft\cdot \sfn )\rb.
\]
}
By \eqref{eq:Holder-AD-OZ} and \eqref{eq:LB-P-ub} the latter is bounded above
by
\[
 \frac{c_9}{\abs{\sfs}_1}\lb
 \frac{1}{\sqrt{{\rm e}^{-b_\epsilon  }|\sfs |_1}\vee 1}\rb^{\frac12}
 \lb
 \bbE_\beta^{\sfh_\epsilon} N^+_{m} (\sfs\cdot \sfn )^2\rb^{\frac{1}{2}} .
\]
{Since the point $\sft$  satisfies both $|\sft|_1 > \frac{|\sfs|_1}{2}$ and
$0\leq \sft\cdot \sfn \leq \sfs\cdot \sfn$, we have  that
the distance $|\sft - \sfs|_1 \leq 2 |\sfs|_1$.}   Therefore,
\eqref{eq:LB-P-ub} implies: {\[
\sumtwo{0\leq \sft\cdot \sfn \leq \sfs\cdot \sfn }{2|\sft|_1 >  |\sfs|_1}
\bbP_{\beta }^{\sfh_\epsilon} (\sft, \sfs  )
\leq
c_{10}
\sum_{n=1}^{2|\sfs|_1} \frac{\dd_\sfn (\sfs )}{\sqrt{n{\rm e}^{-b_\epsilon }}\vee 1}
\leq c_{11} \dd_\sfn (\sfs )\sqrt{\abs{\sfs}_1}\sqrt{\abs{\sfs}_1\wedge {\rm e}^{b_\epsilon}}.
\]
}
Altogether,
the total contribution of $|\sft|_1 >  \frac{|\sfs|_1}{2}$ to
the right hand side of \eqref{eq:decomp-t} is bounded above by
\be
\label{eq:t-bound2}
 c_{12}
 \frac{
 \dd_\sfn (\sfs )
 \sqrt{\abs{\sfs}_1\wedge {\rm e}^{b_\epsilon}}}
 {\abs{\sfs}_1^{3/2}}
 \lb
 \frac{1}{\sqrt{{\rm e}^{-b_\epsilon  }|\sfs |_1}\vee 1}\rb^{\frac12}
 \lb
 \bbE_\beta^\sfh N^+_{m} (\sfs\cdot \sfn )^2\rb^{\frac{1}{2}} .
 \ee
In view of \eqref{eq:t-bound1} and \eqref{eq:t-bound2} we have proved:
\begin{lemma}
\label{lem:UB-Pplus-s}
 The following upper bound holds uniformly in
%$\sfh\in \pKb^+$,
${\rm arg} (\sfn )\in [\frac{\pi}{2}, \frac{3\pi}{4}]$,
$\epsilon$
satisfying  \eqref{eq:nu-uv-cond},
$\sfs\in \calY$
 and $\beta$ large:
% ${\rm arg} (\sfs ) \in [0, \frac{2\pi}{5}]$
%and  $|\sfs|_1 {\rm e}^{-b_\sfh }\geq \frac{1}{2}$:
\be
\label{eq:UB-Pplus-s}
\bbP_{\beta , +}^{\sfh_\epsilon} \lb  0,\sfs \rb
\leq
c_{13}
\frac{  \dd_\sfn (\sfs ) \lb
 \bbE_\beta^{\sfh_\epsilon} N^+_{c\abs{\sfs}_1} (\sfs\cdot \sfn )^2\rb^{\frac{1}{2}}}
{|\sfs|_1
 \sqrt{{\rm e}^{-b_\epsilon  }|\sfs |_1}\vee 1} ,
\ee
\end{lemma}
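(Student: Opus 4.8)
The plan is to read off \eqref{eq:UB-Pplus-s} by combining the two partial bounds \eqref{eq:t-bound1} and \eqref{eq:t-bound2} produced by a dyadic split of the Alili--Doney sum, and then reducing each of them to the stated right-hand side via Cauchy--Schwarz and an elementary two-case comparison. First I would apply the first line of \eqref{eq:AD-expBound-ub} to bound $\bbP_{\beta,+}^{\sfh_\epsilon}(0,\sfs)$ by $\tfrac{c}{|\sfs|_1}\,\bbE_\beta^{\sfh_\epsilon}\!\big[\frA(0,\sfs);N^+_{c|\sfs|_1}(\sfs\cdot\sfn)\big]$ and record the number of ladder heights as $N^+_m(\sfs\cdot\sfn)=\sum_{0\le\sft\cdot\sfn\le\sfs\cdot\sfn}\mathbf{1}_{\calL^+_{\sft,m}}$. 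The Markov property of the effective walk at the first epoch it visits the ladder height at $\sft$ then turns this expectation into the sum \eqref{eq:decomp-t} over intermediate ladder positions $\sft$ of $\bbP_\beta^{\sfh_\epsilon}(\calL^+_{\sft,m})\,\bbP_\beta^{\sfh_\epsilon}(\sft,\sfs)$, and I would split the $\sft$--sum at $|\sft|_1=|\sfs|_1/2$.

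In the regime $|\sft|_1\le|\sfs|_1/2$ the residual displacement satisfies $|\sfs-\sft|_1\gtrsim|\sfs|_1$, so the crude local-limit bound \eqref{eq:LB-P-ub} gives $\bbP_\beta^{\sfh_\epsilon}(\sft,\sfs)\le c/(\sqrt{{\rm e}^{-b_\epsilon}|\sfs|_1}\vee1)$ uniformly in such $\sft$; summing $\bbP_\beta^{\sfh_\epsilon}(\calL^+_{\sft,m})$ over $\sft$ then recovers exactly $\bbE_\beta^{\sfh_\epsilon}N^+_m(\sfs\cdot\sfn)$, which is \eqref{eq:t-bound1}. In the complementary regime $|\sft|_1>|\sfs|_1/2$ I would bound $\bbP_\beta^{\sfh_\epsilon}(\calL^+_{\sft,m})\le\bbP_{\beta,+}^{\sfh_\epsilon}(0,\sft)$, feed this back into the second line of \eqref{eq:AD-expBound-ub}, and use the Cauchy--Schwarz inequality \eqref{eq:Holder-AD-OZ} together with \eqref{eq:LB-P-ub} to peel off a factor $(\bbE_\beta^{\sfh_\epsilon}N^+_m(\sfs\cdot\sfn)^2)^{1/2}$ and a half power of $(\sqrt{{\rm e}^{-b_\epsilon}|\sfs|_1}\vee1)^{-1}$. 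The leftover sum $\sum_\sft\bbP_\beta^{\sfh_\epsilon}(\sft,\sfs)$, with $\sft$ running over the admissible levels $0\le\sft\cdot\sfn\le\sfs\cdot\sfn$ (at most $\dd_\sfn(\sfs)$ lattice points on each sphere $|\sft-\sfs|_1=n$, and $n\le2|\sfs|_1$), is then estimated column by column from \eqref{eq:LB-P-ub}, splitting $n\le{\rm e}^{b_\epsilon}$ from $n>{\rm e}^{b_\epsilon}$, to be $\lesssim\dd_\sfn(\sfs)\sqrt{|\sfs|_1}\sqrt{|\sfs|_1\wedge{\rm e}^{b_\epsilon}}$; this gives \eqref{eq:t-bound2}.

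Finally I would check that both \eqref{eq:t-bound1} and \eqref{eq:t-bound2} are dominated by the right-hand side of \eqref{eq:UB-Pplus-s}. For \eqref{eq:t-bound1} this is immediate from $\bbE_\beta^{\sfh_\epsilon}N^+_m\le(\bbE_\beta^{\sfh_\epsilon}(N^+_m)^2)^{1/2}$ and $\dd_\sfn(\sfs)\ge1$. For \eqref{eq:t-bound2} it reduces to the elementary inequality $\sqrt{|\sfs|_1\wedge{\rm e}^{b_\epsilon}}\,(\sqrt{{\rm e}^{-b_\epsilon}|\sfs|_1}\vee1)^{1/2}\le|\sfs|_1^{1/2}$, verified separately when $|\sfs|_1\le{\rm e}^{b_\epsilon}$ (both sides are of order $|\sfs|_1^{1/2}$) and when $|\sfs|_1>{\rm e}^{b_\epsilon}$ (the left side equals ${\rm e}^{b_\epsilon/4}|\sfs|_1^{1/4}\le|\sfs|_1^{1/2}$). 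The step I expect to require the most care is not any single estimate but the bookkeeping of uniformity: \eqref{eq:AD-expBound-ub}, the local-limit bound \eqref{eq:LB-P-ub}, and the ladder-count bounds of Lemmas~\ref{lem:Nplus}--\ref{lem:Nminus} must all be invoked with constants independent of $\beta$ large, of $\arg(\sfn)\in[\tfrac{\pi}{2},\tfrac{3\pi}{4}]$, and of $\epsilon$ subject to \eqref{eq:nu-uv-cond}. In particular $b_\epsilon$ itself varies with $\epsilon$ and can be as large as $\beta$, so the Poissonian regime $|\sfs|_1\lesssim{\rm e}^{b_\epsilon}$ and the Gaussian regime $|\sfs|_1\gg{\rm e}^{b_\epsilon}$ must be kept apart at every step; this is exactly what produces the factors $\sqrt{{\rm e}^{-b_\epsilon}|\sfs|_1}\vee1$ and $\sqrt{|\sfs|_1\wedge{\rm e}^{b_\epsilon}}$ in \eqref{eq:t-bound1}, \eqref{eq:t-bound2} and \eqref{eq:UB-Pplus-s}, and the correct power of each must be tracked when assembling the final bound.
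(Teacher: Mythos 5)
Your argument reproduces the paper's own proof step for step: the Alili--Doney bound \eqref{eq:AD-expBound-ub} leading to \eqref{eq:decomp-t}, the split of the $\sft$-sum at $|\sft|_1=|\sfs|_1/2$ yielding \eqref{eq:t-bound1} and \eqref{eq:t-bound2} via \eqref{eq:LB-P-ub}, \eqref{eq:Holder-AD-OZ} and the shell count, and the final two-case comparison (which the paper leaves implicit but which you verify correctly). The proposal is correct and takes essentially the same approach as the paper.
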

A completely similar analysis reveals:
\begin{lemma}
\label{lem:UB-hatPplus-s}
 The following upper bound holds uniformly in
%$\sfh\in \pKb^+$,
${\rm arg} (\sfn )\in [\frac{\pi}{2}, \frac{3\pi}{4}]$,
$\epsilon$
satisfying  \eqref{eq:nu-uv-cond},
$\sfs\in \calY$
 and $\beta$ large:
% ${\rm arg} (\sfs ) \in [0, \frac{2\pi}{5}]$
%and  $|\sfs|_1 {\rm e}^{-b_\sfh }\geq \frac{1}{2}$:
\be
\label{eq:UB-hatPplus-s}
\hat \bbP_{\beta , +}^{\sfh_\epsilon} \lb \sfs , 0 \rb
\leq
c_{13}
\frac{  \dd_\sfn (\sfs ) \lb
 \bbE_\beta^{\sfh_\epsilon} N^-_{c\abs{\sfs}_1} (\sfs\cdot \sfn )^2\rb^{\frac{1}{2}}}
{|\sfs|_1
 \sqrt{{\rm e}^{-b_\epsilon  }|\sfs |_1}\vee 1} ,
\ee
\end{lemma}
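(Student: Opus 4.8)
The plan is to transcribe, line by line, the proof of Lemma~\ref{lem:UB-Pplus-s}, performing throughout the substitutions ``ascending $\mapsto$ strict descending'', $N^+\mapsto N^-$, $\calL^+\mapsto\calL^-$ and $\bbP_{\beta,+}^{\sfh_\epsilon}(0,\cdot)\mapsto\hat\bbP_{\beta,+}^{\sfh_\epsilon}(\cdot,0)$, and using the second halves of \eqref{eq:AD-expBound-ub} and \eqref{eq:Holder-AD-OZ-d} in place of the first. Thus I start from $\hat\bbP_{\beta,+}^{\sfh_\epsilon}(\sfs,0)\le\frac{c}{\abs{\sfs}_1}\,\bbE_\beta^{\sfh_\epsilon}\bigl(\frA(\sfs,0);N^-_{c\abs{\sfs}_1}(\sfs\cdot\sfn)\bigr)$, write $N^-_{m}(\sfs\cdot\sfn)=\sum_{0\le\sft\cdot\sfn\le\sfs\cdot\sfn}\1_{\calL^-_{\sft,m}}$ with $m=c\abs{\sfs}_1$ and $\calL^-_{\sft,m}$ the descending counterpart of $\calL^+_{\sft,m}$, and bound $\frac1m\le\frac{c}{\abs{\sfs}_1}$ after restricting, as in \eqref{eq:breakpoints}, to trajectories with $\eqvs\abs{\sfs}_1$ steps. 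Splitting the walk from $\sfs$ to $0$ at a strict descending ladder epoch at $\sft$ yields the analogue of \eqref{eq:decomp-t}, namely $\hat\bbP_{\beta,+}^{\sfh_\epsilon}(\sfs,0)\le\frac{c}{\abs{\sfs}_1}\sum_{0\le\sft\cdot\sfn\le\sfs\cdot\sfn}\bbP_\beta^{\sfh_\epsilon}(\calL^-_{\sft,m})\,\bbP_\beta^{\sfh_\epsilon}(\sfs,\sft)$.

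Next I split the sum according to whether $\abs{\sft}_1\le\frac12\abs{\sfs}_1$ or $\abs{\sft}_1>\frac12\abs{\sfs}_1$, exactly as in regimes (i) and (ii) of the proof of Lemma~\ref{lem:UB-Pplus-s}. In regime (i) one has $\abs{\sfs-\sft}_1\eqvs\abs{\sfs}_1$, so \eqref{eq:LB-P-ub} gives $\bbP_\beta^{\sfh_\epsilon}(\sfs,\sft)\le c/\bigl(\sqrt{{\rm e}^{-b_\epsilon}\abs{\sfs}_1}\vee1\bigr)$, while $\sum_{\sft}\bbP_\beta^{\sfh_\epsilon}(\calL^-_{\sft,m})=\bbE_\beta^{\sfh_\epsilon}N^-_m(\sfs\cdot\sfn)$; this contributes $c\,\bbE_\beta^{\sfh_\epsilon}N^-_{c\abs{\sfs}_1}(\sfs\cdot\sfn)\big/\bigl(\abs{\sfs}_1(\sqrt{{\rm e}^{-b_\epsilon}\abs{\sfs}_1}\vee1)\bigr)$, which is dominated by the right-hand side of \eqref{eq:UB-hatPplus-s} once one uses $\bbE N^-\le(\bbE(N^-)^2)^{1/2}$ and $\dd_\sfn(\sfs)\ge1$. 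In regime (ii) I bound $\bbP_\beta^{\sfh_\epsilon}(\calL^-_{\sft,m})\le\hat\bbP_{\beta,+}^{\sfh_\epsilon}(\sft,0)$, feed this back through the second line of \eqref{eq:AD-expBound-ub} together with the Cauchy--Schwarz inequality \eqref{eq:Holder-AD-OZ-d} to get $\bbP_\beta^{\sfh_\epsilon}(\calL^-_{\sft,m})\le\frac{c}{\abs{\sfs}_1}\bigl(\bbP_\beta^{\sfh_\epsilon}(\sft,0)\bigr)^{1/2}\bigl(\bbE_\beta^{\sfh_\epsilon}N^-_m(\sfs\cdot\sfn)^2\bigr)^{1/2}$, and then sum $\bbP_\beta^{\sfh_\epsilon}(\sfs,\sft)$ over the at most $\dd_\sfn(\sfs)$ lattice lines parallel to $\partial\calH_{+,\sfn}$ with $\abs{\sfs-\sft}_1\le2\abs{\sfs}_1$, invoking \eqref{eq:LB-P-ub} as in the estimate preceding \eqref{eq:t-bound2}. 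This produces the exact counterpart of \eqref{eq:t-bound2} with $N^+$ replaced by $N^-$; combining the two regimes gives \eqref{eq:UB-hatPplus-s}, with the right-hand side of \eqref{eq:ENm-bound}/Lemma~\ref{lem:Nminus} eventually feeding the final bound.

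I expect the only point requiring genuine care — rather than mechanical transcription — to be the verification that the large deviation estimate \eqref{eq:LB-P-ub} and the local asymptotics of Proposition~\ref{prop:LB-P}, which are stated for displacements in $\calY$, may be applied along the trajectory running from $\sfs$ to $0$. Here one notes that the strict descending ladder decomposition forces the relevant increments to lie in $-\calY$, so that after the rearrangement argument invoked at \eqref{eq:Heights-vrsRplus} (equivalently, after time-reversing the effective walk, which preserves the class of irreducible animals) all displacements land in $\calY$ and Proposition~\ref{prop:LB-P} together with \eqref{eq:LB-P-ub} apply verbatim. Everything else — the $\abs{\sft}_1$-dichotomy, the bookkeeping of the $\dd_\sfn(\sfs)$ parallel lines, and the use of Lemma~\ref{lem:Nminus} in place of Lemma~\ref{lem:Nplus} — is a routine repetition of the proof of Lemma~\ref{lem:UB-Pplus-s}.
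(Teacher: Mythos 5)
Your proposal is correct and is exactly what the paper intends: the paper proves Lemma~\ref{lem:UB-Pplus-s} in detail and disposes of Lemma~\ref{lem:UB-hatPplus-s} with the single phrase ``a completely similar analysis reveals'', and your line-by-line transcription (descending ladder epochs, $N^-$, $\calL^-$, the second halves of \eqref{eq:AD-expBound-ub} and \eqref{eq:Holder-AD-OZ-d}) is that analysis. Your remark on why \eqref{eq:LB-P-ub} still applies after time reversal addresses the only non-mechanical point and is consistent with the rearrangement argument the paper invokes at \eqref{eq:Heights-vrsRplus}.
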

\paragraph{\bf Upper bound on $\underline{\bbP}_{\beta , +}^{ \sfh_\epsilon , \delta}
(\sfw , \sfz )
$}. Decomposition \eqref{eq:two-terms-ub}, bounds \eqref{eq:ENp-bound} and
\eqref{eq:ENm-bound} together with  Lemma~\ref{lem:UB-Pplus-s} and
Lemma~\ref{lem:UB-hatPplus-s} imply %{that for any $k > 1$}
\be
\label{eq:upper-bound-all}
 \bbP_{\beta , +}^{\sfh_\epsilon} \lb \sfw , \sfz \rb
 \leq c_{14}
\frac{
\dd_\sfn (\sfw ) \lb
 \bbE_\beta^{\sfh_\epsilon} N^-_{c\abs{\sfz- \sfw}_1} (\sfw \cdot\sfn  )^2
 \rb^{\frac{1}{2}}
 \dd_\sfn (\sfz ) \lb
 \bbE_\beta^{\sfh_\epsilon} N^+_{c\abs{\sfz- \sfw}_1} ( \sfz\cdot \sfn  )^2
 \rb^{\frac{1}{2}}
}
{
|\sfz - \sfw|_1
 \sqrt{{\rm e}^{-b_\epsilon  }|\sfz -\sfw  |_1}\vee 1
},
\ee
{since the  expectation $\lb\bbE N^k
 \rb^{\frac{1}{k}}$ increases in $k$.}

There are two cases to consider:

\smallskip

\noindent
{
\opt{1}.
If $\arg{\sfn} = \frac{\pi}{2}$, then by Lemma~\ref{lem:Nplus},
\be
\label{eq:ub-plus-1}
 \lb \bbE_\beta^{\sfh_\epsilon} N^+_{c\abs{\sfz- \sfw}_1} ( \sfz\cdot \sfn  )^2
 \rb^{\frac{1}{2}} \leq c_2 c\, \dd_\sfn (\sfz ) \lb
 {\rm e}^{b_\epsilon}\wedge \abs{\sfz- \sfw}_1\rb
 = c_2 c\,\dd_\sfn (\sfz ) \lb \ell_\sfw \wedge \abs{\sfz- \sfw}_1\rb .
\ee
In the last equality we used that in the case of
the horizontal wall $\ell_\sfw \equiv {\rm e}^{b_\epsilon}$, see the remark right after
\eqref{eq:lu}. On the other hand, by Lemma~\ref{lem:Nminus},
\be
\label{eq:ub-minus-1}
\lb \bbE_\beta^{\sfh_\epsilon} N^-_{c\abs{\sfz- \sfw}_1} (\sfw \cdot\sfn  )^2
 \rb^{\frac{1}{2}} \leq c_2 \dd_\sfn (\sfw ) .
\ee
}
\smallskip

\noindent
{
\opt{2}.
If $\arg{\sfn} > \frac{\pi}{2}$, then by Lemma~\ref{lem:Nplus},
\be
\label{eq:ub-plus-2}
\lb \bbE_\beta^{\sfh_\epsilon} N^+_{c\abs{\sfz- \sfw}_1} ( \sfz\cdot \sfn  )^2
 \rb^{\frac{1}{2}} \leq c_2 \dd_\sfn (\sfz ) .
\ee
On the other hand, by Lemma~\ref{lem:Nminus},
\be
\label{eq:ub-minus-2}
\begin{split}
 \lb \bbE_\beta^{\sfh_\epsilon} N^-_{c\abs{\sfz- \sfw}_1} (\sfw \cdot\sfn  )^2
 \rb^{\frac{1}{2}} &\leq c_2c\,  \dd_\sfn (\sfw ) \lb \ell_\sfn (\sfw )
 \wedge {\rm e}^{b_\epsilon}\wedge \abs{\sfz -\sfw}_1\rb \\
 &\stackrel{\eqref{eq:lu}}{=} c_2
 c\, \dd_\sfn (\sfw ) \lb \ell_\sfw\wedge \abs{\sfz -\sfw}_1\rb  .
 \end{split}
\ee
}
\smallskip

\noindent
{
Since
$
 {\rm e}^{-\frac{\delta \beta\dd_\sfn (\cdot )}{2} }\dd_\sfn (\cdot )^2
$
is uniformly bounded, a substitution of \eqref{eq:ub-plus-1} and
\eqref{eq:ub-minus-1} in the case of  $\arg{\sfn} =\frac{\pi}{2}$
(respectively of
\eqref{eq:ub-plus-2} and \eqref{eq:ub-minus-2} in the case of $\arg{\sfn} >\frac{\pi}{2}$
)
into \eqref{eq:upper-bound-all} implies:
}
\be
\label{eq:target-ub-wz}
\underline{\bbP}_{\beta , +}^{ \sfh_\epsilon , \delta}
(\sfw , \sfz ) \leq
c_{15} \frac{
{\rm e}^{-\frac{\delta \beta {\rm d}_\sfn (\sfw )}{2}}
 \ell_\sfw \wedge \abs{\sfz - \sfw}_1
 {\rm e}^{-\frac{\delta\beta {\rm d}_\sfn (\sfz )}{2}}
}
{
|\sfz - \sfw|_1
 \sqrt{{\rm e}^{-b_\epsilon  }|\sfz -\sfw  |_1}\vee 1
},
\ee
uniformly in
%$\sfh\in \pKb^+$,
${\rm arg} (\sfn )\in [\frac{\pi}{2}, \frac{3\pi}{4}]$,
$\epsilon$
satisfying  \eqref{eq:nu-uv-cond},
$\sfz , \sfw \in \calH_{ + , \sfn}$
 and $\beta$ large.

\begin{remark}
\label{rem:lw-n-pi}
Note that $\ell_\sfw \equiv {\rm e}^{b_\epsilon}$
if $\arg (\sfn ) = \frac{\pi}{2}$. Hence in the latter case:
\be
\label{eq:target-ub-wz-pi}
\underline{\bbP}_{\beta , +}^{ \sfh_\epsilon , \delta}
(\sfw , \sfz ) \leq
c_{15} \frac{
{\rm e}^{-\frac{\delta \beta {\rm d}_\sfn (\sfw )}{2}}
 {\rm e}^{-\frac{\delta\beta {\rm d}_\sfn (\sfz )}{2}}
}
{
 \lb \sqrt{ {\rm e}^{-b_\epsilon  }|\sfz -\sfw  |_1} \vee 1\rb^3
}.
\ee
\end{remark}
\subsection{Proof of Proposition~\ref{prop:P-plus-bounds}.}
\label{sub:proofPx}
In view of \eqref{eq:target-lb-uv} and \eqref{eq:target-ub-wz}, and
since we permit dependence $c_1 = c_1 (\beta )$ in \eqref{eq:ImplProp10},
the inequality \eqref{eq:ImplProp10} is, as it is stated, a rather crude bound.
First of all
we
can assume that ${\rm e}^{-\sfh}\abs{\sfx}_1 \geq 2$. Then,
by \eqref{eq:target-lb-uv} (taking $\sfu = 0$ and $\sfv = \sfx$),
\[
 \bbP_{\beta , +}^{\sfh} (0, \sfx )\geq \frac{c_7 {\rm e}^{-\beta\delta}}{\abs{\sfx}_1
 \sqrt{\abs{\sfx}_1 {\rm e}^{-\sfh}}} .
\]
This already rules of the exponential term on the left hand side of \eqref{eq:ImplProp10}.
Also we may restrict attention to  $\abs{\sfu - \sfv}\geq \frac{1}{2}\abs{\sfx}_1$. In this
case
\eqref{eq:target-ub-wz},
\[
 \bbP_{\beta , +}^{\sfh} (\sfu , \sfv ) \leq
 \frac{\sqrt{8} c_{15} {\rm e}^{b_\sfh} {\rm e}^{\beta\delta (\dd_\sfn (\sfu ) +\dd_\sfn (\sfv ))}}
 {\abs{\sfx}_1
 \sqrt{\abs{\sfx}_1 {\rm e}^{-\sfh}}} .
\]
as it follows fro \eqref{eq:target-ub-wz} and \eqref{eq:lu} (which implies $\ell_\sfu\leq
{\rm e}^{b_\sfh}$).
It remains to recall
\eqref{eq:abs-vs-d}, and \eqref{eq:ImplProp10} follows. Indeed, by the above
\[
{\rm e}^{-\beta\delta (\abs{\sfu}_1 +\abs{\sfx-\sfv}_1)} \bbP_{\beta , +}^{\sfh} (\sfu , \sfv )
\leq \frac{\sqrt{8} c_{15} {\rm e}^{3\beta\delta +\sfh}}{c_7} \bbP_{\beta , +}^{\sfh} (0, \sfx )
\df c_1 (\beta )\bbP_{\beta , +}^{\sfh} (0, \sfx ).
\]
\subsection{\bf Upper bounds on $a_\delta$ and $b_\delta$:
Proof of Proposition~\ref{lem:Plus-bounds}.}
\label{sub:proofPlus-bounds}
Recall that we assume that $(\sfu , \sfv )\in \calA$. In particular
 \eqref{eq:nu-uv-cond} holds and $\abs{\sfv -\sfu }_1\geq {\rm e}^{\nu\beta}$.
 In the sequel we shall rely on the  decay estimate \eqref{eq:K-kernel}
  on the kernel $\calK_\beta$.
 Let us elaborate on Remark~\ref{rem:Rstrip}: If $\phi$ is a positive
 function on $\bbN$, then
 \be
 \label{eq:strip}
 \sum_{\sfw, \sfz} \phi\lb \abs{\sfw - \sfu}_1\rb \calK_\beta (\sfw , \sfz ), \
 \sum_{\sfw, \sfz} \calK_\beta (\sfw , \sfz ) \phi\lb \abs{\sfv - \sfz}_1\rb
 \leq c_1 R \sum_n \phi (n ) .
 \ee
\noindent
\paragraph{\bf Upper bound on $a_\delta$.}
Consider the sum on the right hand side of {\eqref{eq:a-delta}.} By  \eqref{eq:Psi-ub}
 it is bounded above by
\be
\label{eq:a-delta-1}
c_2 {\rm e}^{-2\chip\beta}
%\sup_{\lb\sfu , \sfv \rb\in\calA}
\sum_{\sfw ,\sfz} \frac{
\underline{\bbP}_{\beta , +}^{ \sfh_\epsilon , \delta}
(\sfu , \sfw )
{\mathcal K}_\beta (\sfw , \sfz )
\underline{\bbP}_{\beta , +}^{ \sfh_\epsilon , \delta}
(\sfz , \sfv )
}{
\overline{\bbP}_{\beta , +}^{ \sfh_\epsilon , \delta}
(\sfu, \sfv )
} .
\ee
By \eqref{eq:Psi-ub}  {and \eqref{eq:K-kernel}} we may restrict
attention to $\max\lbr \abs{\sfw-\sfu}_1 ,
\abs{\sfv - \sfz}_1\rbr  \geq \frac{\abs{\sfv - \sfu}_1}{3}$.
\smallskip

\noindent
(i) If $3\abs{\sfv - \sfz}_1 \geq \abs{\sfv - \sfu}_1$, then (recall $\ell_\sfv\le  e^{b_\epsilon}$)
\[
 \frac{
 \underline{\bbP}_{\beta , +}^{ \sfh_\epsilon , \delta}
(\sfz , \sfv )
}
{
\overline{\bbP}_{\beta , +}^{ \sfh_\epsilon , \delta}
(\sfu, \sfv )
}
\leq \frac{{\rm e}^{b_\epsilon}}{\ell_\sfu},
\]
as it follows from \eqref{eq:target-lb-uv} and \eqref{eq:target-ub-wz}
(recall also \eqref{eq:target-lb-uv-pi} and \eqref{eq:target-ub-wz-pi} in
the special  case of $\arg{\sfn} = \frac{\pi}{2}$) and $\abs{\sfv - \sfu}_1 \geq
{\rm e}^{\nu\beta}$. On the other hand, \eqref{eq:target-ub-wz} and
\eqref{eq:strip} imply:
\[
 \sum_{\sfw , \sfz}
 \underline{\bbP}_{\beta , +}^{ \sfh_\epsilon , \delta}
(\sfu , \sfw ) \calK_\beta (\sfw , \sfz ) \leq
c_3 R\lbr
\sum_1^{{\rm e}^{b_\epsilon}} \frac{\ell_\sfu}{n} +
\sum_{{\rm e}^{b_\epsilon}}^\infty \frac{\ell_\sfu
\sqrt{{\rm e}^{b_\epsilon}}}{n^{3/2}}\rbr
\leq c_4 R\ell_\sfu b_\epsilon,
\]
which means that the total contribution of (i) to
  \eqref{eq:a-delta-1}  and, as a result, to  \eqref{eq:a-delta} is bounded above
by
\be
\label{eq:bound-a1}
c_5 R {\rm e}^{- (2\chip\beta  - b_\epsilon )} b_\epsilon .
\ee
(ii) If $3\abs{\sfw -\sfu}_1 \geq \abs{\sfv - \sfu}_1$, then
\eqref{eq:target-lb-uv} and \eqref{eq:target-ub-wz} imply:
\[
 \frac{
 \underline{\bbP}_{\beta , +}^{ \sfh_\epsilon , \delta}
(\sfu , \sfw )
}
{
\overline{\bbP}_{\beta , +}^{ \sfh_\epsilon , \delta}
(\sfu, \sfv )
}
\leq c_6.
\]
On the other hand,
\eqref{eq:target-ub-wz} and
\eqref{eq:strip} imply:
\[
 \sum_{\sfw , \sfz}
 \calK_\beta (\sfw , \sfz )
\underline{\bbP}_{\beta , +}^{ \sfh_\epsilon , \delta}
(\sfz , \sfv )
\leq
c_7 R\lbr
\sum_1^{{\rm e}^{b_\epsilon}} 1  +
\sum_{{\rm e}^{b_\epsilon}}^\infty \frac{\ell_\sfu {\rm e}^{3 b_\epsilon/2}}{n^{3/2}}\rbr
\leq c_8 R {\rm e}^{b_\epsilon} ,
\]
which means that the total contribution of (ii) to \eqref{eq:a-delta} is bounded above
by
$
c_9 R {\rm e}^{- (2\chip\beta  - b_\epsilon )}  .
$

Altogether we conclude that
\be
\label{eq:3}
a_\delta \leq c_{10} R b_\epsilon \rm e^{- (2\chip\beta  - b_\epsilon )}
\ee
{uniformly}  in $\beta$ large.
Since (by \eqref{eq:ax-bx})
$b_\epsilon \leq \beta$ and since  $\chip > 1/2$, the expression
in \eqref{eq:3} is actually $\smof{1}$ uniformly in $\beta$ large.

\paragraph{\bf Upper bound on $b_\delta$.} Exactly in the same fashion 
{we
derive the following upper bound on $b_\delta$:
 There exists a constant ${c_{11}}$, such that}
\be
\label{eq:b-d-bound-f}
b_\delta \leq c_{11} R^2   b_\epsilon {\rm e}^{- (4\chip \beta - 2b_\epsilon )}  .
\ee
also {uniformly}  in $\beta$ large. Again,
since $b_\epsilon\leq \beta$ and
 since $\chip >1/2$,
 $b_\delta = \smof{1}$ uniformly in $\beta$ large, as it was claimed. \qed

\appendix

\label{sec:Appa}

\section{ A correction to \cite{DKS}.}

The first motivation for one of the authors of the present paper (S.S.) was to
correct the mistake in the Wulff construction book \cite{DKS}. Namely, one
statement in that book -- the Theorem 4.16, dealing with spatial sensitivity of the surface tension
-- is not correct; more precisely, the upper bound statement 4.19 is
erroneous. This mistake was uncovered by the authors of the paper \cite{CLMST1}. But the reader of the
present paper should not think that some forty pages have to be added to
\cite{DKS} in order to correct it, because a weaker version of the Theorem
4.16 is quite sufficient to get all other results of \cite{DKS}. We will give
here the formulation of this weaker statement, in the notations of the book
\cite{DKS}:

\begin{theorem}
\label{SSS} Theorem 4.16 of \cite{DKS} holds for $\bar{V}_{N}=U_{N,d,\varkappa
},$ with $d<\bar{d}/2,$ i.e. when the change from the interaction $\Phi$ to
$\tilde{\Phi}$ happens far away from the range $\bar{V}$ of the random contour.
\end{theorem}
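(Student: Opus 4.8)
The plan is to obtain Theorem~\ref{SSS} as a corollary of Theorem~\ref{NoPin}, and more precisely of its two-sided bound \eqref{eq:1}, after translating the set-up of \cite[Chapter~4]{DKS} into the contour language of Section~\ref{sec:themain}. In the notation there, $\bar V_N$ is a diffusive tube of width $\bar d$ (in the appropriate scale) around the straight segment joining the two marked points, the random contour $\gamma$ typically stays inside $\bar V_N$, and the hypothesis $d<\bar d/2$ means exactly that the region $U_{N,d,\varkappa}$ on which one replaces $\Phi$ by $\tilde\Phi$ lies in the complement of $\bar V_N$, at a distance $c_N\to\infty$ from the segment. First I would record two elementary facts. (i) For 2D Ising-type contours the exponent in \eqref{07} is $\chi=2>\tfrac12$, so the hypotheses of Theorem~\ref{NoPin} are met. (ii) If $\gamma$ stays at distance $\geq c_N/2$ from $U_{N,d,\varkappa}$, then $\tilde w(\gamma)=w(\gamma)\exp\!\big(\mathrm O(|\gamma|\,\mathrm e^{-\chi\beta c_N})\big)$: indeed any connected cluster $\calC$ with $\calC\cap\Delta_\gamma\neq\varnothing$ that also meets $U_{N,d,\varkappa}$ has $\mathrm{diam}_\infty(\calC)$ of order $c_N$, and \eqref{07} together with the summability of such clusters through a given bond gives the estimate.

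Granting (i)--(ii), I would split $\tilde Z_N=\tilde Z_N^{\mathrm{near}}+\tilde Z_N^{\mathrm{far}}$, where $\tilde Z_N^{\mathrm{near}}$ collects the contours staying at distance $\geq c_N/2$ from $U_{N,d,\varkappa}$ and $\tilde Z_N^{\mathrm{far}}$ the rest. By (ii), $\tilde Z_N^{\mathrm{near}}=(1+\mathrm o(1))\,Z_N^{\mathrm{near}}$, and by the standard surface-order large-deviation estimates of \cite{DKS} one has $Z_N^{\mathrm{near}}=(1-\mathrm o(1))\,Z_N$. The whole point is therefore to control $\tilde Z_N^{\mathrm{far}}$, and this is exactly where the possibly attractive modification $\tilde\Phi-\Phi$ could in principle pin $\gamma$ to $U_{N,d,\varkappa}$ and inflate the partition function --- and exactly what Theorem~\ref{NoPin} forbids. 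Concretely, I would decompose a contour contributing to $\tilde Z_N^{\mathrm{far}}$ along its successive excursions outside $\bar V_N$: an excursion coming within $c_N/2$ of $U_{N,d,\varkappa}$ is a contour confined to the half-space bounded by the relevant straight face of $\partial\bar V_N$, carrying a modified potential that agrees with $\Phi$ on the $\bar V_N$-side and satisfies \eqref{07} everywhere, i.e.\ precisely a ``modified potential landscape'' as in Section~\ref{sec:modi}; applying \eqref{eq:1} bounds the sum of its $\tilde w$-weights by $c_2(\beta)$ times the sum of its $w$-weights, uniformly in the endpoints on $\partial\bar V_N$ and in the (finitely many, admissible by lattice symmetry) face directions. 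Re-summing over the number and endpoints of excursions yields $c_1(\beta)\,Z_N\leq\tilde Z_N\leq c_2'(\beta)\,Z_N$; since $\tau$ and the surface tension $\tilde\tau$ built from $\tilde\Phi$ are $-\frac{1}{\beta d_N}\log$ of $Z_N$ and $\tilde Z_N$, the constants disappear as $N\to\infty$, so $\tilde\tau=\tau$, which is the weak form of \cite[Theorem~4.16]{DKS} claimed in Theorem~\ref{SSS} (and in particular repairs the upper bound (4.19)).

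The hard part will be the re-summation over excursions in the last step: a contour may cross $\partial\bar V_N$ many times, and one must ensure that the product of the per-excursion constants $c_2(\beta)>1$ coming from \eqref{eq:1} does not blow up. This is where the hypothesis $d<\bar d/2$ is really used: each excursion that feels the modification has length at least of order $c_N$ (it must reach within $c_N/2$ of $U_{N,d,\varkappa}$ and return), so its intrinsic weight already carries a factor $\mathrm e^{-\Omega(\beta c_N)}$, which for $\beta$ large dominates $c_2(\beta)$; combined with the \cite{DKS} control on the number of crossings of a typical contour, the product of gains over all excursions stays bounded. A secondary, purely bookkeeping, obstacle is matching the geometry of $U_{N,d,\varkappa}$ and of $\partial\bar V_N$ with the half-space framework of Theorem~\ref{NoPin}; since that theorem is uniform over all $\sfn$ with $\arg(\sfn)\in[-\tfrac\pi4,\tfrac{3\pi}4]$, covering $\partial\bar V_N$ by finitely many such half-spaces is routine, and I do not expect any genuinely new idea beyond Theorem~\ref{NoPin} to be required.
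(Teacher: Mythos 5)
The paper does not actually supply a proof of this statement: Appendix A explicitly leaves it as ``an easy exercise'', and the whole point of the weakened formulation is that \emph{no} pinning-versus-repulsion analysis (hence no appeal to Theorem~\ref{NoPin}) is needed. You have inverted the geometry of the hypothesis. In the statement, $\bar V_N=U_{N,d,\varkappa}$ is the region to which the random contour is \emph{confined}, and $d<\bar d/2$ guarantees that the interaction is modified only on clusters lying far (at distance growing with $N$) from that confinement region --- see the paper's own illustration with the strips $\{|y-\varkappa x|<\tfrac12 N^{\alpha}\}$ and $\{|y-\varkappa x|<N^{\alpha}\}$. Consequently your fact (ii) applies to \emph{every} contour in the ensemble: any cluster $\calC$ with $\calC\cap\Delta_\gamma\neq\varnothing$ that is affected by the modification must have ${\rm diam}_\infty(\calC)$ of order $c_N\to\infty$, so by \eqref{07} and the exponential bound on the number of connected sets of given diameter through a fixed bond, $\tilde w(\gamma)=w(\gamma)\exp\bigl(\mathrm O(|\gamma|\,{\rm e}^{-\chi'\beta c_N})\bigr)$ uniformly. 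Together with the standard control on $|\gamma|$ this gives $\tilde Z_N=(1+\smof{1})Z_N$ outright, and the surface tensions coincide. In your notation, $\tilde Z_N^{\mathrm{far}}$ is empty by definition of the ensemble; there is nothing left to prove after your first paragraph.

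The excursion argument you build on top of this is therefore not only unnecessary but also has a genuine gap as written. The weights \eqref{06} do not factorize over excursions: a single cluster $\calC$ can intersect $\Delta_\gamma$ along several excursions and along the portion of $\gamma$ inside $\bar V_N$, so you cannot bound the $\tilde w$-weight of the contour by a product of per-excursion partition functions to which \eqref{eq:1} would apply. Decoupling the pieces requires exactly the irreducible-decomposition machinery of Sections~\ref{sec:Reformulation}--\ref{sec:Irreducible} (or a separate cluster-expansion argument), and the subsequent re-summation of the per-excursion constants $c_2(\beta)>1$ against the energetic cost ${\rm e}^{-\Omega(\beta c_N)}$ is asserted rather than carried out. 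If one really had to control contours that approach the modified region --- i.e., the original, unweakened Theorem~4.16 of \cite{DKS} --- then something like Theorem~\ref{NoPin} would indeed be the right tool; but that is precisely the hard problem the main body of the paper solves, not the content of Theorem~\ref{SSS}.
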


In terms of the present paper, the meaning of the above statement is that the
surface tension does not change if the interaction is perturbed far from the
range of the contour. For example, if we compute the surface tension over
the polymers $\Gamma_{N}$ fitting a strip $\left\{  x,y:\left\vert y-\varkappa
x\right\vert <\frac{1}{2}N^{\alpha}\right\}  ,$ but perturb the interactions
$\Phi_{\beta}\left(  \mathcal{C}\right)  $ only if $\mathcal{C}$ does not fit
the wider strip $\left\{  x,y:\left\vert y-\varkappa x\right\vert <N^{\alpha
}\right\}  ,$ then the claim that the surface tension is unaffected by the
perturbation holds true, and is easy to prove. For a motivated reader of
\cite{DKS}, who reached  Theorem 4.16 of it, the proof of the above statement
 and the check that it is sufficient for all the needs of the book,
will be an easy exercise.

But the problem of spatial sensitivity of the surface tension in its stronger
form of Theorem \ref{NoPin} is important in various applications and is of independent interest.

\section*{Acknowledgments}

F. L. T. is very grateful to P. Caputo and to F. Martinelli for
countless discussions on these issues.

\end{document}